\numberwithin{equation}{section}
\newtheorem{proposition}{Proposition}[section]
 \newtheorem{definition}{Definition}[section]
\newtheorem{remark}{Remark}[section]
\theoremstyle{definition}
\newtheorem{theorem}{Theorem}[section]
\theoremstyle{plain}
\newtheorem{lemma}{Lemma}[section]
 \theoremstyle{definition}
\newtheorem{?}[Th]{Problem}
\title[]{Classification of irreducible integrable representations of loop toroidal Lie algebras}
\author[]{Priyanshu Chakraborty, Punita Batra }
\address{Priyanshu Chakraborty, Harish-Chandra Research Institute (HBNI), Chhatnag Road, Jhunsi, Prayagraj(Allahabad) 211019, Uttar Pradesh, India.}
\email{priyanshuchakraborty@hri.res.in}
\address{Punita Batra, Harish-Chandra Research Institute (HBNI), Chhatnag Road, Jhunsi, Prayagraj(Allahabad) 211019, Uttar Pradesh, India.}
\email{batra@hri.res.in}
\begin{document}

\begin{abstract} The aim of this paper is to classify irreducible integrable representations of loop toroidal Lie algebras with finite dimensional weight spaces. Loop toroidal Lie algebras are given by $\tau(B)=\tau_0 \otimes B \oplus D $, where $\tau_0$ is  a toroidal Lie algebra, $B$ is a commutative, associative, finitely generated unital algebra over $\mathbb C$ and $D$ is the space spanned by degree derivations $d_0,d_1,...,d_n$ over $\mathbb C$. When the center of toroidal Lie algebra acts non-trivially on modules, we prove modules come from direct sum of finitely many copies of affine Kac-Moody Lie algebras. In the case when the center of toroidal Lie algebra acts trivially on modules, we prove modules come from direct sum of finitely many copies of finite dimensional simple Lie algebras.
\end{abstract}

\maketitle
Keyword: {Toroidal Lie algebra, Current Kac-Moody Lie algebra, Integrable modules.}


MSC 2020:{ 17B65, 17B67}\\

\section{Introduction} The importance of toroidal Lie algebras are well known in the theory of both mathematics and physics. Toroidal Lie algebra can be viewed as a universal central extension of the multiloop algebra $\mathfrak{\dot g} \otimes A_{n+1}$, where $\mathfrak{\dot g}$ is a finite dimensional simple Lie algebra over $\mathbb C$ and $A_{n+1}= \mathbb{C}[t_0^{\pm 1}, t_1^{\pm 1},...,t_n^{\pm 1}]$ be the laurent polynomial ring over $\mathbb C$ in $n+1$ commuting variables $t_0, t_1,...,t_n,$ for reference see \cite{1,15,16,18}. Representations of toroidal Lie algebras have been well studied in \cite {1,5,8,21}. For instance, in \cite{5} the study of irreducible modules for toroidal Lie algebras with finite dimensional weight spaces reduces to the study of irreducible modules for loop affine Lie algebras. Moreover currently mathematicians are showing interest to study loop algebras of some well known Lie algebras. For example, representations of loop algebras for finite dimensional simple Lie algebras has been studied in \cite{7,10}, reprentation for loop of Kac-Moody algebras in \cite{2,19} and loop of Virasoro algebras in \cite{20}. In the current paper we study irreducible integrable representations of loop toroidal Lie algebras with finite dimensional weight spaces, when a part of the center acts non-trivially and trivially on modules. \\

Let us consider the toroidal Lie algebra $\tau_0 = \mathfrak{\dot g} \otimes A_{n+1} \oplus Z $, $Z$ be the universal central extension of $\mathfrak{\dot g} \otimes A_{n+1}$. Define the loop toroidal Lie algebra $\tau(B)=\tau_0 \otimes B \oplus D $, where $B$ is a commutative, associative, finitely generated unital algebra over $\mathbb C$ and $D$ is the derivation space spanned by $d_0,d_1,...,d_n$ over $\mathbb C$. Lie algebra structure on $\tau(B)$ is given in Section 2. It is well known that center of the toroidal Lie algebra $\tau= \tau_0 \oplus D$, is spanned by $K_0,K_1,...,K_n,$ see \cite{1} and Section 2 for details. In the loop toroidal case center is spanned by $K_0\otimes B, K_1\otimes B,..., K_n\otimes B.$ In \cite{1,9}, S.Eswara Rao has classified irreducible integrable modules for $\tau$ with finite dimensional weight spaces with the assumptions that $K_i$, for $0\leq i \leq n$ acting on modules trivially and non-trivially. In this paper we classify irreducible integrable modules for $\tau(B)$ with finite dimensional weight spaces when $K_0\otimes 1,...,K_n\otimes 1$ acts non-trivially and trivially on modules. \\

The paper is organised as follows. In Section 2, we start by recalling definitions of toroidal Lie algebra, current Kac-Moody Lie algebra \cite{2} and define loop toroidal Lie algebra $\tau(B)$. Then we define root system, Weyl group and make a natural triangular decomposition of $\tau(B)$ as 
$$\tau(B)=\tau(B)^-\oplus \tau(B)^0\oplus \tau(B)^+. $$ 
In section 3, we define integrable modules for $\tau(B)$ and 
in Lemma \ref{2.1}, we prove an equivalent condition for integrability of $\tau(B)$ modules and $\tau$ modules. Moreover, this result distinguished our algebra from toroidal Lie algebra.
\\

 Let $V$ be an irreducible integrable module for $\tau(B)$ with finite dimensional weight spaces. In section 4, after twisting the action of $\tau(B)$ by an automorphism we assume that only $K_0\otimes 1$ acts non-trivially and all other $K_i\otimes 1$, for $1\leq i \leq n$ acts trivially on $V$ (when we consider non-trivial action). On this assumption, using the method used in \cite{1}, we find that there exists a non-zero vector $v$ such that $\tau(B)^+v=0$. Then we construct the highest weight space $T=\{ v\in V: \tau(B)^+v=0 \}  $, an irreducible module for $\tau(B)^0$. In Theorem \ref{t3.2}, we prove that weight spaces of $T$ are uniformly bounded. In Proposition \ref{p3.3}, we prove that a large part of the center acts trivially on our modules, using results of Heisenberg algebra from \cite{4}. In Theorem \ref{t3.4}, we reduce our module $V$ to an irreducible integrable highest weight module $\overline V$ for current Kac-Moody Lie algebra $ G =\mathfrak{g}_{af}'\otimes{A_n} \otimes B \oplus {\mathbb{C}d_0}$, where $\mathfrak{g}_{af}'$ is an affine Lie algebra and classify the module $\overline V$ using results of S.Eswara.Rao, P. Batra \cite{2}.\\
 
In section 5, we construct a module $\overline V \otimes A_n$ for the algebra $\widetilde G = G \oplus \widetilde D,$ $\widetilde D$ is the space spanned by the derivations $d_1,d_2,...,d_n.$ We prove that $\overline V \otimes A_n$ is completely reducible $\widetilde G$ module and $V$ is isomorphic to an irreducible component of  $\overline V \otimes A_n$ as $\widetilde G$ module.   
\\

In section 6, we assume that all $K_i\otimes 1$, for $0 \leq i \leq n$ acting on $V$ trivially and prove that whole $Z \otimes B$ acts on $V$ trivially. Then our modules becomes a module for the algebra $\mathfrak{\dot g} \otimes A_{n+1} \otimes B \oplus D$. Let $\overline \psi$ be a $\mathbb{Z}^{n+1}$ graded morphism from the universal enveloping algebra $U(\mathfrak {\dot h} \otimes A_{n+1} \otimes B) $ to $A_{n+1}$, where $\mathfrak {\dot h}$ be a fixed Cartan subalgebra of $\mathfrak{\dot g}$. Then we define universal highest weight module $M(\overline \psi)$ for  $\widetilde{\tau_1}=\mathfrak{\dot g} \otimes A_{n+1} \otimes B \oplus D$. Let $V(\overline \psi)$ be the unique irreducible quotient of $M(\overline \psi)$.
In Theorem \ref{t5.1}, we prove that any irreducible integrable module for $\tau(B)$ is isomorphic to $V(\overline{\psi})$. For each $\overline \psi$ we associate a map $\psi$ from $U(\mathfrak {\dot h} \otimes A_{n+1} \otimes B) $ to $\mathbb C$ and construct the non-graded irreducible highest weight module $V(\psi)$ for $\tau_1=\mathfrak {\dot g} \otimes A_{n+1} \otimes B$. We prove that $V(  \overline \psi)$ is isomorphic to an irreducible component of $V(\psi) \otimes A_{n+1}$. Then we prove an important result, Proposition \ref{p5.6},  related to irreducible modules for loop of simple Lie algebra. Using the help of Proposition \ref{p5.6}, we prove that $V(\psi)$ is finite dimensional. Finally we prove in Theorem \ref{t5.3}, that $V(\psi)$ is a finite dimensional irreducible module for direct sum of finitely many copies of finite dimensional simple Lie algebras, using result of \cite{10}.

\section{Notations and Preliminaries}
Throughout the paper all the vector spaces, algebras, tensor products are taken over the field of complex numbers $\mathbb{C}$. Let $\mathbb{Z}$, $\mathbb{N}$, $\mathbb{Z}_+$ denote the sets of integers, natural numbers and non-negative integers respectively.
Let $A =A_{n+1}=\mathbb{C}[t_0^{\pm1},...,t_{n}^{\pm1}]$ and 
 $A_{n}=\mathbb{C}[t_1^{\pm1},...,t_{n}^{\pm1}] $ denote the Laurent polynomial rings in $n+1$ $ (t_0,t_1,...,t_n)$ and $n$ $(t_1,t_2,...,t_n)$ commuting variables respectively.
For convenience, we denote $  \underline {m} =(m_1,..,m_n) \in \mathbb{Z}^n$, $t^{\underline {m} }=t_1^{m_1}...t_n^{m_n}$ and $(m_0,\underline m)=(m_0,m_1,...,m_n)\in \mathbb{Z}^{n+1}.$ 
For any Lie algebra $G$, let $G'$ and  $ U (G)$ denote the derived algebra $[G,G]$ and the universal enveloping algebra of $G$ respectively.\\

Let $\dot{ \mathfrak{g}}$ be a finite dimensional simple Lie algebra and  $\dot{\mathfrak{h}} $ be a Cartan subalgebra of $\dot{ \mathfrak{g}}$. Let $\dot{ \mathfrak{g}} = \dot{ \mathfrak{g}^-} \oplus {\dot{\mathfrak{h}}} \oplus \dot{ \mathfrak{g}^+}$ be the root space decomposition of $\dot{ \mathfrak{g}}$.
Let $\dot{\Delta} $ be the root system of $\dot{ \mathfrak{g}}$, $\pi=$ $ \lbrace \alpha_1 , \alpha_2,.. \alpha_d \rbrace$ be the simple roots of  $\dot{\Delta} $, $\dot{\Delta} ^+ $  $(\dot{\Delta}^-) $ be the set of positive (negative) roots of  $\dot{\Delta} $ and  $\dot \Omega$ be the Weyl group of $\mathfrak{\dot g}.$ Let $\beta $ be the maximal root of $\dot{\Delta}$. For each root $\alpha \in \dot \Delta $, let $\alpha^{\vee} \in \dot{ \mathfrak{h}} $ be such that $\alpha(\alpha^{\vee})=2$. Let $(,)$ be the normal non-degenerate symmetric bilinear form on $\dot{\mathfrak{g}}$ . Let 
 $$\dot P^+ = \{\lambda \in \mathfrak{\dot h}^*: \lambda(\alpha_i^\vee)\geq 0, \hspace{.3cm} \forall 1 \leq i \leq d  \}, $$ we call elements of $\dot P^+ $ as dominant integral weights.

Let us define $\lambda \leq \mu$ in $ \dot {\mathfrak{h}}^{*}$ iff $\mu - \lambda = \displaystyle {\sum_{i=1}^{d}}n_i \alpha_i$,    $ n_i \in \mathbb{Z}_+$.\\

Fix a commutative associative finitely generated unital algebra $B$ for this paper. Recall the definition of current Kac-Moody Lie algebra from \cite{2}.  Let $\mathfrak{g}$ be a Kac-Moody Lie algebra and $h$ be a Cartan subalgebra of $\mathfrak{g}$. Let $\mathfrak{g'} $ be the derived algebra of $ \mathfrak{g}$. Let $h'=h \cap \mathfrak{g'} $ and $h^{''}$ be the complementary subspace of $h'$ in $h$. Let $\mathfrak{g}' ={ \mathfrak{g}'}^- \oplus h' \oplus {\mathfrak{g}'}^+ $ be the standard triangular decomposition of $\mathfrak{g}'$. Let ${\mathfrak{g}}_B = \mathfrak{g}' \otimes B \oplus{h^{''}} $ and define a Lie algebra structure on ${\mathfrak{g}}_B$ by 
$$ [X\otimes b, Y \otimes b']=[X,Y]\otimes bb', $$ 
 $$[X\otimes b , h]= [X,h]\otimes b , $$ $\forall$ $X,Y \in {\mathfrak{g}}$, $b,b' \in B , h \in h''.$ This algebra is called current Kac-Moody Lie algebra.\\

Let us consider the multi-loop algebra $\mathfrak{\dot g} \otimes A$ with the usual Lie bracket.
Let 
      $$\Omega _A =span\{t_0^{k_0}t^{\underline k}K_i: 0\leq i\leq  n,  k_0 \in \mathbb{Z},  \underline k \in \mathbb{Z}^n  \}.$$ 
      Also let $dA$ be the subspace of $\Omega_A$ defined by  $$ dA= span\{\displaystyle{\sum_{i=0}^{n}}  k_i t_0^{k_0} t^{\underline k} K_i:  k_0 \in \mathbb{Z},  \underline k \in \mathbb{Z}^n   \}.$$ 
 Let us consider $Z = \Omega_A / dA$ and
 $\tau_0 = \dot{ \mathfrak{g}} \otimes A \oplus Z  $. Lie algebra structure on $\tau_0$ is defined by\\

1. $\left[ x \otimes t_0^{k_0} t^{\underline k}, y\otimes  t_0^{r_0} t^{\underline r} \right] = [x,y]\otimes t_0^{k_0+r_0} t^{\underline k+\underline r} + (x,y)\displaystyle{\sum_{i=0}^{n} k_i t_0^{k_0+r_0} t^{\underline k+ \underline r} K_i},$\\ \hspace*{.8cm}    $\forall x,y \in \mathfrak{\dot g}, k_0,r_0 \in \mathbb Z, \underline{k}, \underline{r} \in \mathbb Z^n.  $\\

2. $ Z$ is central in $\dot{\mathfrak{g}} \otimes A \oplus Z$. \\ 

$\tau_0$ is called toroidal Lie algebra.
It is well known that $\tau_0$ is the universal central extension of  $\dot{\mathfrak{g}} \otimes A $; for more details see \cite{15,16}.  \\

Let $\tau (B) = \tau_0\otimes B \oplus D $ and define a Lie algebra structure on $\tau(B)$ by                \\ 
 
1. $[x\otimes b, y\otimes b']=[x,y]\otimes bb',$ \hspace{.5cm}  $\forall x,y \in \tau_0, b,b' \in B.$\\

2. $[d_i, x \otimes t_0^{k_0} t^{\underline k} \otimes b]=k_i x \otimes t_0^{k_0} t^{\underline k}\otimes b, $ \hspace{.5cm} $\forall$  $   0\leq i\leq  n , b \in B, x \in \mathfrak{\dot g},  k_0 \in \mathbb Z, \underline{k} \in \mathbb Z^n $\\

3.  $[d_i, t_0^{k_0} t^{\underline k}K_j \otimes b]=k_i t_0^{k_0} t^{\underline k} K_j \otimes b$, \hspace{.5cm} $\forall$  $  0\leq i, j \leq  n , b \in B, k_0 \in \mathbb Z, \underline{k} \in \mathbb Z^n .$
\\

4. $\left[ d_i, d_j \right] = 0$, \hspace{.5cm} $\forall$         $ 0 \leq i, j \leq n.$\\

We call $\tau(B)$ as loop toroidal Lie algebra.
\\ 

 Let $ {H}= \mathfrak{\dot h} \bigoplus \displaystyle{ (\bigoplus_ {i=0}^{n }\mathbb{C}K_i)}\bigoplus \displaystyle{ (\bigoplus_ {i=0}^{n }\mathbb{C}d_i)} $, where we have identified $\mathfrak{\dot h}\otimes 1 \otimes 1 , K_i\otimes 1$ with $\mathfrak{\dot h} , K_i$ respectively.  
Then H is a Cartan sub-algebra of $\tau(B)$ . Let $\delta_i, \omega  _i \in H^*$ be such that

 $$ \delta _i (\mathfrak{\dot h}) =0, \hspace{.5cm} \delta _i (K_j)=0 , \hspace{.5cm} \delta _i (d_j)= \delta_{ij} ,  \hspace{.5cm} 0 \leq i,j \leq n ,$$ 
 
  $$ \omega_i (\mathfrak{\dot h}) =0  , \hspace{.5cm} \omega _i (K_j)= \delta _{ij} , \hspace{.5cm} \omega_i (d_j)=0 ,  \hspace{.5cm} 0 \leq i,j \leq n . $$ \\
 For $  \underline {m} =(m_1,..,m_n) \in \mathbb{Z}^n $,  let  $ \delta_ {\underline m}= $ $ \displaystyle \sum_{i=1}^{n} m_i \delta_i $. Then $ \tau(B) $ has root space decomposition with respect to H as  $$ {\tau(B)}= \tau(B)_0 \oplus (\displaystyle\bigoplus_{\alpha \in \Delta}\tau(B)_{\alpha}  ),$$   \\
where $\Delta = \dot{\Delta} \cup \{ \alpha+ m_0 \delta_0 + \delta_{\underline m} : \alpha \in \dot\Delta \cup \{0 \},    \underline {m}  \in \mathbb{Z}^n, m_0 \in  \mathbb{Z}, \hspace{.2cm} (m_0, \underline m) \neq (0,0)  \}$ and
     $$ \tau(B)_{\alpha+ m_0 \delta_0 + \delta_{\underline m} }= {\dot{\mathfrak{g}}_\alpha} \otimes t_0^{m_0} t^{\underline m}\otimes B,$$  
    $$ \tau(B)_{ m_0 \delta_0 + \delta_{\underline m} }= {\dot{\mathfrak{h}}} \otimes t_0^{m_0} t^{\underline m}\otimes B \oplus( \bigoplus_{i=0}^{n} {\mathbb{C}t_0^{m_0} t^{\underline m}K_i} ) \otimes B,$$ 
    $$ \tau(B)_0=    \dot{\mathfrak{h}}\otimes B \oplus {\bigoplus_{i=0}^{n}\mathbb{C}K
    _i\otimes B \oplus {\bigoplus_{i=0}^{n}\mathbb{C}d_i} .}$$
    
Let,      $$\tau(B)^{+} ={ {\dot {\mathfrak{g}}^{+}} \otimes \mathbb{C} [t_1^{\pm1},...,t_{n}^{\pm1}]}\otimes B  \oplus  {\dot{\mathfrak g}\otimes {t_0{ \mathbb{C}[t_0,t_1^{\pm1},...,t_{n}^{\pm 1}]}} \otimes B} \oplus{ \displaystyle{\bigoplus_{i=0}^{n} }}  {t_0{ \mathbb{C}[t_0,t_1^{\pm1},...,t_{n}^{\pm 1}]}}K_i \otimes B  , $$ 
 
     $$\tau(B)^{-} ={ {\dot {\mathfrak{g}}^{-}} \otimes \mathbb{C}[t_1^{\pm1},...,t_{n}^{\pm1}]}\otimes B \oplus  {\dot{\mathfrak g}\otimes {t_0^{-1}{ \mathbb{C}[t_0^{-1},t_1^{\pm1},...,t_{n}^{\pm1}]}}\otimes B} \oplus{ \displaystyle{\bigoplus_{i=0}^{n} }}  {t_0^{-1}{ \mathbb{C}[t_0^{-1},t_1^{\pm1},...,t_{n}^{\pm 1}]}}K_i \otimes B  ,$$  
  
     $$\tau(B)^{0} ={ {\dot {\mathfrak{h}}} \otimes \mathbb{C}[t_1^{\pm1},...,t_{n}^{\pm1}]\otimes B} \oplus{ \displaystyle{\bigoplus_{i=0}^{n} }}  { { \mathbb{C}[t_1^{\pm1},...,t_{n}^{\pm 1}]}}K_i \otimes B 
    \oplus (\displaystyle {\bigoplus_{i=0}^{n}} {\mathbb{C}d_i} ). $$ 
   
    Then $\tau(B)$ has a natural triangular decomposition as $$\tau(B) = \tau(B)^- \oplus \tau(B)^0 \oplus \tau(B)^+ .$$ 
\\
Now extend $ \alpha \in{\dot {\Delta}} $ to an element in $ H^* $ by defining $\alpha(K_i)=0$,  $\alpha(d_i)=0 $, for         $ 0 \leq i \leq n $  and the non-degenerate symmetric bilinear form of $\mathfrak{\dot h}$ to a bilinear form on $H^*$ by 

 $$ (\alpha_i , \delta_k)=(\alpha_i, \omega_k)=0  ,\hspace{.2cm} 1 \leq i \leq d , 0 \leq k \leq n ,$$ 
 
  $$ ( \delta_k,\delta_p)=(\omega_k, \omega_p)=0 ,  \hspace{.5cm}(\omega_k,\delta_p)=\delta_{kp}, \hspace{.2cm  }0 \leq k,p \leq n . $$ 
  \\ We call  $ \gamma \in \Delta $ is a real root if $(\gamma, \gamma) \neq 0$ and denote the set of real roots as $\Delta^{re}$. For $\gamma=\alpha + m_0\delta_0 + \delta_{\underline m} \in \Delta $, define
   $$  \gamma^{\vee} = \alpha^{\vee} + \frac{2}{(\alpha, \alpha)}{\displaystyle \sum_{i=0}^{n}}m_iK_i, $$
    where $\alpha^{\vee} \in \dot{ \mathfrak {h}} $ has the property that $\alpha(\alpha^{\vee})=2$. Also check that $ \gamma(\gamma^{\vee})=2$.\\ For a real root $\gamma \in \Delta$ define a reflection $r_\gamma$ on $ H^*$ by $$ r_{\gamma}(\lambda)= \lambda - \lambda(\gamma^{\vee}) \gamma , \hspace{.2cm}\lambda \in H^* .$$
Let $ \ {\Omega} $ be the Weyl group generated by the reflections corresponding to real roots of $\Delta $.   
\section{Integrable $\tau(B)$ modules}
\begin{definition}\label{2.1}
A module $V$ for $\tau(B)$ is said to be integrable if \\

1. If the action of $H$ on $V$ is diagonalizable, i.e $V $ can be decomposed as $ V=\displaystyle { \bigoplus_{\lambda \in H^*}}{V_{\lambda}}$,
where $ V_{\lambda} =\{v \in V: hv=\lambda(h)v , \forall h \in H \}.$  $ V_{\lambda}$'s are called weight spaces of $V$ of weight $\lambda$.\\

2. If for all $\alpha \in \dot{\Delta}$, $(m_0, \underline{m}) \in \mathbb{Z}^{n+1}$, $b \in B$ and $v \in V$  there exists an integer k $ =k(\alpha,(m_0,\underline m),b,v)$ such that $(X_{\alpha}\otimes t_0^{m_0}t^{\underline m}\otimes b)^k.v =0  $, where $X_{\alpha} $ is the root vector corresponding to the root $\alpha \in \dot{\Delta}$.
\end{definition}
\begin{lemma}\label{l2.1}
Let $V$ be a module for $\tau(B)$ with finite dimensional weight spaces with respect to $H$. Then $V$ is integrable for $\tau(B)$ if and only if it is integrable for $\tau$ $($see the definition of integrable modules for $\tau$ in Section 2 of \cite{1}$)$.
\end{lemma}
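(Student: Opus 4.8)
The statement is an equivalence whose forward implication is essentially formal, so I would spend almost all of the effort on the reverse direction. For the direction ``integrable for $\tau(B)$ $\Rightarrow$ integrable for $\tau$'', the plan is simply to restrict along the Lie algebra embedding $\tau \hookrightarrow \tau(B)$, $x \mapsto x\otimes 1$. Since this embedding is the identity on $H$, the weight space decomposition of $V$ is unchanged, so condition (1) of Definition \ref{2.1} is preserved; and specializing $b=1$ in condition (2) gives exactly the local nilpotency of $X_{\alpha}\otimes t_0^{m_0}t^{\underline m}$ demanded by the definition of integrability for $\tau$.

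For the converse, the key idea is that $\tau$-integrability already controls every real-root string, and each generator $X_{\alpha}\otimes t_0^{m_0}t^{\underline m}\otimes b$ moves weights along such a string. Concretely, I would fix $\alpha\in\dot{\Delta}$ and $(m_0,\underline m)\in\mathbb{Z}^{n+1}$, set $\gamma=\alpha+m_0\delta_0+\delta_{\underline m}$, and note that $(\gamma,\gamma)=(\alpha,\alpha)\neq 0$ so that $\gamma\in\Delta^{re}$. Taking $e=X_{\alpha}\otimes t_0^{m_0}t^{\underline m}$ and $f=X_{-\alpha}\otimes t_0^{-m_0}t^{-\underline m}$ inside $\tau$ (using that both $\alpha$ and $-\alpha$ lie in $\dot{\Delta}$), these elements together with $\gamma^{\vee}$ span a copy of $\mathfrak{sl}_2$ in $\tau$, and $\tau$-integrability guarantees that $e$ and $f$ act locally nilpotently on $V$.

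Next, for a fixed weight $\mu$ I would examine the string $W=\bigoplus_{k\in\mathbb{Z}}V_{\mu+k\gamma}$. Since $e$ and $f$ raise and lower weights by $\gamma$ and $\gamma^{\vee}$ preserves each weight space, $W$ is an $\mathfrak{sl}_2$-submodule of $V$; moreover $\gamma^{\vee}$ acts on $V_{\mu+k\gamma}$ by the scalar $\mu(\gamma^{\vee})+2k$, so the $\gamma^{\vee}$-eigenspaces of $W$ are precisely the finite dimensional spaces $V_{\mu+k\gamma}$. Because $e,f$ act locally nilpotently, $W$ is an integrable $\mathfrak{sl}_2$-module and hence a direct sum of finite dimensional irreducibles; combined with the finiteness of the weight spaces this forces $W$ to be finite dimensional, so $V_{\mu+k\gamma}=0$ for all but finitely many $k$. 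I expect this $\mathfrak{sl}_2$ step to be the main obstacle, and I would isolate it as the auxiliary fact that \emph{an integrable $\mathfrak{sl}_2$-module with finite dimensional weight spaces is finite dimensional}: otherwise infinitely many finite dimensional irreducible summands would force some $\gamma^{\vee}$-eigenvalue to occur with infinite multiplicity, contradicting $\dim V_{\mu+k\gamma}<\infty$.

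Finally I would conclude as follows. Given $b\in B$ and a weight vector $v\in V_{\mu}$, the element $X_{\alpha}\otimes t_0^{m_0}t^{\underline m}\otimes b$ has weight $\gamma$, so $(X_{\alpha}\otimes t_0^{m_0}t^{\underline m}\otimes b)^{k}v\in V_{\mu+k\gamma}$, which vanishes once $k$ is large by the previous paragraph. Since an arbitrary $v\in V$ is a finite sum of weight vectors, this yields the required integer $k=k(\alpha,(m_0,\underline m),b,v)$ in condition (2) of Definition \ref{2.1}, and condition (1) holds by hypothesis. Hence $V$ is integrable for $\tau(B)$, which completes the reverse implication and the proof.
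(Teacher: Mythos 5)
Your proposal is correct and follows essentially the same route as the paper: both pass to the $\mathfrak{sl}_2$-copy spanned by $X_{\alpha}\otimes t_0^{m_0}t^{\underline m}$, $X_{-\alpha}\otimes t_0^{-m_0}t^{-\underline m}$, $\gamma^{\vee}$ attached to the real root $\gamma$, show the string $\bigoplus_{k} V_{\mu+k\gamma}$ is finite dimensional because a direct sum of infinitely many finite-dimensional irreducible $\mathfrak{sl}_2$-summands would force some $\gamma^{\vee}$-eigenvalue to occur with infinite multiplicity, and then get local nilpotency of $X_{\alpha}\otimes t_0^{m_0}t^{\underline m}\otimes b$ from the weight shift by $\gamma$. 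The only cosmetic difference is that the paper manufactures the contradiction by hand, via an increasing sequence of $S_\gamma$-highest-weight vectors $v_i$ whose irreducible components all meet one fixed weight space $V_{\lambda+N_0\gamma}$ with $\lambda(\gamma^{\vee})+2N_0\ge 0$, whereas you invoke complete reducibility of integrable $\mathfrak{sl}_2$ weight modules wholesale and locate the common eigenvalue at $0$ or $\pm 1$.
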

\begin{proof}
It is immediate that $V$ is integrable $\tau$ module from Definition \ref{2.1} and the definition of integrable module for $\tau$.   \\
Conversely assume that $V$ is $\tau$ integrable. Weight spaces of $V$ as $\tau$ and $\tau(B)$ modules are same, since both have same Cartan sub-algebra. 
Consider  $ V'=\displaystyle { \bigoplus_{k \in \mathbb N}}{V_{\lambda + k\gamma}}$, where $\lambda$ is a fixed weight and $\gamma=\alpha + m_0\delta_0 +\delta_{\underline m}$ is a fixed real root.\\
{\bf Claim:} $V'$ is finite dimensional. Suppose not, let $k_1$ be the smallest positive integer such that $V_{\lambda + k_1\gamma} \neq 0$, pickup $v_1'(\neq 0)  \in V_{\lambda + k_1\gamma}.$ As $V$ is integrability  $\tau$-module there exists a natural number $r_1$ such  $(X_{\alpha}\otimes t_0^{m_0}t^{\underline m})^{r_1}.v_1' =0 $ but $(X_{\alpha}\otimes t_0^{m_0}t^{\underline m})^{r_1-1}.v_1' \neq 0.$\\ Consider $v_1=(X_{\alpha}\otimes t_0^{m_0}t^{\underline m})^{r_1-1}.v_1' \in V_{\lambda + (k_1+r_1-1)\gamma} =  V_{\lambda + n_1\gamma} $      $($ say $)$, then $(X_{\alpha}\otimes t_0^{m_0}t^{\underline m}).v_1 =0 $. Since $V'$ is not finite dimensional there exists a $n_2>n_1$ such that $v_2 \in  V_{\lambda + n_2\gamma}$ and  $(X_{\alpha}\otimes t_0^{m_0}t^{\underline m}).v_2 =0 $. Continuing this we can construct an increasing sequence $\{ n_i \}$ of natural numbers and $v_i \in V_{\lambda + n_i\gamma} $ such that 
$(X_{\alpha}\otimes t_0^{m_0}t^{\underline m}).v_i =0. $ \\Consider the $sl_2$ copy of the toroidal Lie algebra $S_{\gamma}=$ span $ \{ X_{\alpha}\otimes t_0^{m_0}t^{\underline m}, X_{-\alpha}\otimes t_0^{-m_0}t^{-\underline m}, \gamma^\vee \}$. Due to integrability and by choices of $v_i$, $U(S_\gamma)v_i$'s are finite dimensional highest weight modules for all $i$. Let $V_i$ be the irreducible component of $U(S_\gamma)v_i$ containing the highest weight vector $v_i$ with highest weight $\lambda(\gamma^\vee)+ 2n_i $. Then the sum of $V_i$ is direct.
Consider $N_0 \in \mathbb Z$ such that $\lambda(\gamma^\vee) + 2N_0 \geq 0$, this is possible since $\lambda(\gamma^\vee) \in \mathbb Z,$ Lemma 2.3 of \cite{1}. Then by $\mathfrak{sl}_2$ theory $V_{\lambda + N_0\gamma}$ is not finite dimensional, since sum of $V_i$'s is direct and sequence $\{ n_i \}$ is increasing. Hence the claim.
 \\Therefore there are only finitely many weights of $V$ as $\tau(B) $ module of the form ${\lambda + k\gamma}$ such that $k \in \mathbb N.$ Thus $V$ is $\tau(B)$ integrable.
     
\end{proof}

\begin{lemma}\label{l2.3}
Let $V$ be an irreducible integrable module for $\tau(B)$ with finite dimensional weight spaces with respect to $H$. Then,\\ 
1. $P(V)$ is  $\Omega$-invariant, $P(V)$ is the set of all $\lambda \in H^* $ such that $V_\lambda \neq 0$   . \\
2. $dim V_\lambda = dim V_{\omega_{ \lambda}} $ , $\forall \omega \in \Omega , \lambda \in P(V)$. \\
3.  If $\alpha \in \Delta^{re} , \lambda \in P(V) $ then $\lambda(\alpha^{\vee}) \in \mathbb{Z}$.\\
$4$.  If $\alpha \in \Delta^{re} , \lambda \in P(V) $ and $ \lambda(\alpha^{\vee})>0(<0)$ then $\lambda - \alpha(\lambda+\alpha) \in P(V)$.\\
5. $ \lambda{(K_i \otimes 1)} $ is a constant integer for all $\lambda\in P(V)$. 
\end{lemma}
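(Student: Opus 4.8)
The plan is to reduce everything to the representation theory of the $\mathfrak{sl}_2$-subalgebras attached to real roots, exactly as in the proof of Lemma \ref{l2.1}. Fix a real root $\gamma = \alpha + m_0\delta_0 + \delta_{\underline m}\in\Delta^{re}$ with $\alpha\in\dot\Delta$ (in parts (3) and (4) of the statement this real root is the one written $\alpha$), and put $e_\gamma = X_\alpha\otimes t_0^{m_0}t^{\underline m}\otimes 1$ and $f_\gamma = X_{-\alpha}\otimes t_0^{-m_0}t^{-\underline m}\otimes 1$. Using relation $1$ of $\tau(B)$ together with $d(t_0^{m_0}t^{\underline m})t_0^{-m_0}t^{-\underline m} = \sum_{i=0}^n m_i K_i$, one checks $[e_\gamma, f_\gamma] = \gamma^\vee\in H$, so that $S_\gamma = \langle e_\gamma, f_\gamma, \gamma^\vee\rangle$ is a copy of $\mathfrak{sl}_2$ inside $\tau(B)$; by Definition \ref{2.1}(2) both $e_\gamma$ and $f_\gamma$ act locally nilpotently on $V$. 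I would prove (3) first, since (5) depends on it. For fixed $\lambda$ and $\gamma$ the subspace $M = \bigoplus_{k\in\mathbb Z} V_{\lambda+k\gamma}$ is $S_\gamma$-stable, because $e_\gamma, f_\gamma, \gamma^\vee$ have $H$-weights $\gamma, -\gamma, 0$. Moreover $M$ is finite dimensional: applying the finiteness claim from the proof of Lemma \ref{l2.1} to the real roots $\gamma$ and $-\gamma$ bounds the string on both sides. Hence $M$ is a finite-dimensional $\mathfrak{sl}_2$-module on which $\gamma^\vee$ acts semisimply with integer eigenvalues; as $\gamma^\vee$ acts on $V_\lambda$ by the scalar $\lambda(\gamma^\vee)$, this gives $\lambda(\gamma^\vee)\in\mathbb Z$, which is (3).

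For (1) and (2), local nilpotence of $e_\gamma, f_\gamma$ makes the operator $\tau_\gamma = \exp(e_\gamma)\exp(-f_\gamma)\exp(e_\gamma)$ a well-defined invertible map on $V$. The usual $\mathfrak{sl}_2$ computation yields $\tau_\gamma^{-1} h\,\tau_\gamma = h - \gamma(h)\gamma^\vee$ for $h\in H$, so for $v\in V_\mu$ one gets $h\,\tau_\gamma v = (\mu(h)-\mu(\gamma^\vee)\gamma(h))\tau_\gamma v = (r_\gamma\mu)(h)\,\tau_\gamma v$; thus $\tau_\gamma$ restricts to an isomorphism $V_\mu\xrightarrow{\sim}V_{r_\gamma\mu}$. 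This simultaneously gives $\dim V_\mu = \dim V_{r_\gamma\mu}$ and $\mu\in P(V)\Leftrightarrow r_\gamma\mu\in P(V)$, and since $\Omega$ is generated by the $r_\gamma$ with $\gamma\in\Delta^{re}$, parts (1) and (2) follow. For (4), take $0\neq v\in V_\lambda$ with $\lambda(\gamma^\vee)>0$; inside the finite-dimensional $\mathfrak{sl}_2$-module $M$ the vector $v$ has $\gamma^\vee$-eigenvalue $\lambda(\gamma^\vee)>0$, and in a finite-dimensional $\mathfrak{sl}_2$-module $f_\gamma$ annihilates only vectors lying in weight spaces of non-positive weight, so $f_\gamma v\neq 0$. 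Since $f_\gamma v\in V_{\lambda-\gamma}$, we conclude $\lambda-\gamma\in P(V)$.

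Finally (5) follows from (3) via the coroot formula. For any $\alpha\in\dot\Delta$ and $(m_0,\underline m)\neq(0,0)$ the element $\gamma = \alpha+m_0\delta_0+\delta_{\underline m}$ is a real root, since $(\gamma,\gamma)=(\alpha,\alpha)\neq0$ (the $\delta_i$ are isotropic and orthogonal to $\dot{\mathfrak h}^*$). Taking $\alpha=\beta$ the maximal root, normalized so that $(\beta,\beta)=2$, the formula $\gamma^\vee = \beta^\vee + \frac{2}{(\beta,\beta)}\sum_{i=0}^n m_i K_i$ becomes $\gamma^\vee = \beta^\vee + \sum_{i=0}^n m_i K_i$. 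By (3) both $\lambda(\gamma^\vee)\in\mathbb Z$ and $\lambda(\beta^\vee)\in\mathbb Z$, so $\sum_{i=0}^n m_i\,\lambda(K_i\otimes 1)\in\mathbb Z$ for every $(m_0,\underline m)\in\mathbb Z^{n+1}\setminus\{0\}$; running over the standard unit vectors gives $\lambda(K_j\otimes 1)\in\mathbb Z$ for each $0\leq j\leq n$.

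The main obstacle is part (3): everything downstream is standard $\mathfrak{sl}_2$-theory plus the explicit coroot formula, but the integrality $\lambda(\gamma^\vee)\in\mathbb Z$ hinges on the two-sided root strings $\bigoplus_{k\in\mathbb Z}V_{\lambda+k\gamma}$ being finite dimensional. This is precisely where integrability (local nilpotence) and the finite-dimensionality of weight spaces enter, imported from the claim in the proof of Lemma \ref{l2.1}; verifying that this claim applies to both $\gamma$ and $-\gamma$, and hence bounds the string on both ends, is the one point that needs care.
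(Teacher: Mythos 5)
Your proposal is correct in substance but proceeds along a genuinely different route from the paper. The paper's proof of Lemma \ref{l2.3} is a transfer argument: parts (1)--(4) are quoted from Lemma 2.3 of \cite{1} (which, as the paper notes, needs only integrability), applied to $V$ viewed as a $\tau$-module via the easy direction of Lemma \ref{l2.1}; part (5) is obtained by combining Lemma \ref{l2.2} (each $K_i\otimes b$ acts as a scalar, which uses irreducibility) with the same lemma of \cite{1}. You instead reprove the statement from scratch inside $\tau(B)$: the operators $\exp(e_\gamma)\exp(-f_\gamma)\exp(e_\gamma)$ implementing $r_\gamma$ for (1)--(2), finite-dimensional $\mathfrak{sl}_2$-theory for (3)--(4), and the coroot formula $\gamma^\vee=\beta^\vee+\sum_{i}m_iK_i$ for (5). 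What your route buys: a self-contained argument that never leaves $\tau(B)$, and a proof of (5) that is actually cleaner than the paper's, since it needs neither irreducibility nor Lemma \ref{l2.2}. What the paper's route buys: brevity, and no need to re-verify any $\mathfrak{sl}_2$ computations. Your parts (1), (2), (4) are the standard arguments and are sound; local nilpotence of $e_\gamma$ and $f_\gamma$ is immediate from Definition \ref{2.1} with $b=1$, and the sign conventions in your conjugation formula are harmless because $h\mapsto h-\gamma(h)\gamma^\vee$ is an involution.

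The one point that needs repair is the finite-dimensionality input in part (3). You import the string-finiteness claim from the proof of Lemma \ref{l2.1}; but in the paper that claim is itself justified by invoking Lemma 2.3 of \cite{1} (to know $\lambda(\gamma^\vee)\in\mathbb Z$, hence that a suitable $N_0$ exists) --- that is, by the very integrality statement you are proving. So read as a self-contained argument your (3) is circular, and read as an argument granting \cite{1} it is redundant. The fix is easy and shortens your proof: you do not need the two-sided string, nor finite-dimensionality of weight spaces, at all. For a weight vector $v\in V_\lambda$, PBW (with the ordering $f_\gamma<\gamma^\vee<e_\gamma$) gives $U(S_\gamma)v=\mathrm{span}\{f_\gamma^{a}e_\gamma^{c}v : a,c\geq 0\}$, since each $e_\gamma^{c}v$ is a $\gamma^\vee$-eigenvector; local nilpotence kills all $c\geq N$ and, for each surviving $c$, all $a\geq M_c$, so $U(S_\gamma)v$ is finite dimensional. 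Then $\gamma^\vee$ acts on this finite-dimensional $\mathfrak{sl}_2$-module with integer eigenvalues, giving (3), and your kernel-of-$f_\gamma$ argument for (4) runs inside $U(S_\gamma)v$ as well.
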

\begin{proof}
 Note that in Lemma 2.3 of \cite{1} for the statements 1 to 4 only integrability is required and use Lemma \ref{l2.1}. Since $K_i\otimes b$ are cental in $\tau(B)$, so $\lambda(K_i\otimes b) $ acts on $V$ as saclar. It follows from Lemma 2.3 of \cite{1} that $ \lambda{(K_i \otimes 1)} \in \mathbb{Z}$.
\end{proof}
 
\section{The center of toroidal Lie algebra acts non-trivially }
In this section we will reduce our module to a module for current Kac-Moody Lie algebra, when $K_i$, for $0\leq i \leq n$ acts non-trivially on our modules.\\

Let $GL(n+1,\mathbb{Z}) $ be the set of all $n+1 \times n+1$  matrices with determinant $\pm 1 $. There is a natural action of $GL(n+1,\mathbb{Z}) $ on $\mathbb{Z}^{n+1} $.  For every $ P \in GL(n+1,\mathbb{Z})$ define $P:\tau(B) \to \tau(B)$  by
     $$ P( X \otimes{t^{\underline m}} \otimes b )  = X \otimes{t^{P \underline m}} \otimes b, $$  
    $$ P(d( t^{\underline m}) t^{\underline r} \otimes b)= d( t^{P \underline m}) t^{P \underline r} \otimes b ,$$
      for all b in B and $\underline m \in \mathbb{Z}^{n+1} $.\\
 
Let $(d_0', d_1',..,d_n')=(P^T)^{-1}(d_0,d_1, ..,d_n)$ and define $P(d_i)= d_i' $.\\

It is easy to check that $P$ is an automorphism of $\tau(B)$.
If $P=(p_{ij})_{0 \leq i,j \leq n}$, then  $P(t^{\underline m} K_j \otimes b) = \displaystyle \sum_{i=0}^{n} { p_{ij}t^{P \underline m} K_i \otimes b }$. Hence taking $\underline m =0 $ we get

\begin{align}\label{e3.1}
 P(K_j \otimes b) &= \displaystyle \sum_{i=0}^{n} { p_{ij} K_i \otimes b } .
\end{align}

\begin{definition}
A linear map $z : V \to V$ is said to be a central operator of degree $(m_0,\underline m)$ for $\tau(B)$ if $z$ commutes with $\dot{\mathfrak g}\otimes A \otimes B \oplus Z\otimes B$ and $d_iz -zd_i= m_iz$ for all $0 \leq i \leq n$.
\end{definition}
 
\begin{lemma}\label{l3.1}$($Lemma 1.7,1.8 \cite{5}$)$
Let $V$ be an irreducible module for $\tau(B)$ with finite dimensional weight spaces with respect to $H$. Then, \\
1. Let $z$ be a central operator of degree $(m_0, \underline m)$ such that $zu \neq 0$ for some $u \in V.$ Then $zw \neq 0$ for all non-zero $w \in V$.\\
2. Let $z$ be a non zero central operator of degree  $(m_0, \underline m)$. Then there exists a operator $T$ of degree  $(-m_0, -\underline m)$ such that  $zT=Tz=Id$ on $V$.\\
3. Up to a scalar multiple there is at most one non-zero central operator of a given degree.

\end{lemma}

\begin{theorem}\label{t3.1}
Let $V$ be an irreducible module for $\tau(B)$ with finite dimensional weight spaces with respect to $H$ and let $ L =\{ (r_0,\underline r) \in \mathbb{Z}^{n+1} : t_0^{r_0}t^{\underline r} K_i \otimes b \neq 0 $ on $V$ for some $ 0\leq i \leq n , b \in B\} $. Let $S$ be the subgroup of $\mathbb{Z}^{n+1}$ generated by L and assume that rank of S be k. Then upto an automorphism of $\tau(B)$
 \\1. there exists non zero central operators $ z_i$ for all  $ i \geq n-k+1 $ such that degree of $z_i$ is equal to $(0,0,...,l_i,0,..0 ). $
 \\ 2. $k < n+1$. 
 \\3.  $t_0^{r_0}t^{\underline r} K_i \otimes b = 0 $ on $V ,$ for all $ r_j $, $n-k+1 \leq i \leq n $ and $b \in B.$
 \\4. $t_0^{r_0}t^{\underline r} K_i \otimes b = 0 $ on $V ,$ for all $ r_j \neq 0 $, $ 0 \leq j \leq n-k $ and for all $0\leq i \leq n$, $b \in B.$
\\5. There exists a proper submodule $W$ of $V$ for $ \dot{\mathfrak{g}} \otimes A \otimes B \oplus  Z \otimes B \oplus D_k $ such that $V/W$ has finite dimensional weight spaces  with respect to $ \mathfrak{\dot h}\oplus \displaystyle{ \bigoplus_{i=0}^{n}\mathbb{C}K_i }\oplus D_k$, where $D_k $ is the space spanned by $ d_0, d_1, .. ,d_{n-k}$. 
\end{theorem}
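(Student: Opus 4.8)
The plan is to treat every nonzero operator $t_0^{r_0}t^{\underline r}K_i\otimes b$ as a central operator of degree $(r_0,\underline r)$ in the sense of the definition preceding Lemma~\ref{3.1}, and to exploit the rigidity that lemma provides. First I would record that if $z,w$ are nonzero central operators of degrees $(r_0,\underline r),(s_0,\underline s)$, then the composite $zw$ is again a central operator, of degree $(r_0+s_0,\underline r+\underline s)$, and it is nonzero because $z$ is invertible by Lemma~\ref{3.1}(2); together with the existence of inverses this shows that the set of degrees carrying a nonzero central operator is a subgroup of $\mathbb Z^{n+1}$ containing $L$, hence containing $S$. Thus for \emph{every} $\underline m\in S$ there is a nonzero, invertible central operator $z_{\underline m}$, unique up to scalar by Lemma~\ref{3.1}(3).

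Next I would normalise $S$. By the Smith normal form there is $P\in GL(n+1,\mathbb Z)$ and positive integers $l_{n-k+1},\dots,l_n$ with $P(S)=\bigoplus_{i=n-k+1}^{n}\mathbb Z\,l_i e_i$; after applying the automorphism $P$ of $\tau(B)$ I may assume $S$ is in this diagonal form. Part~1 is then immediate, the central operator attached to the generator $l_ie_i$ having degree $(0,\dots,l_i,\dots,0)$. Part~4 is equally immediate, since $L\subseteq S\subseteq\operatorname{span}(e_{n-k+1},\dots,e_n)$ forces $t_0^{r_0}t^{\underline r}K_i\otimes b=0$ whenever some coordinate $r_j$ with $0\le j\le n-k$ is nonzero.

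The heart of the argument is Part~3, and here I would bring in the defining relation of the center, $\sum_{i=0}^{n}m_i\,t_0^{m_0}t^{\underline m}K_i=0$ in $Z$ (the image of $dA$). Evaluated at a generator degree $l_je_j$ this reads $l_j\,t^{l_je_j}K_j\otimes b=0$, so the diagonal component $K_j$ already vanishes at that degree. Using that each $t_0^{r_0}t^{\underline r}K_i\otimes b$ is a scalar multiple of the single operator $z_{\underline r}$, and that $z_{\underline m}z_{\underline r}$ is proportional to $z_{\underline m+\underline r}$, I would propagate this vanishing to all degrees and, after composing $P$ with a further transformation fixing the diagonal form of $S$, move the surviving nonzero $K$-directions into the coordinates $0,\dots,n-k$; this is consistent because those directions lie in $\underline m^{\perp}$ for $\underline m\in S$, and $\operatorname{span}(e_0,\dots,e_{n-k})$ is exactly the orthogonal complement of $\operatorname{span}(e_{n-k+1},\dots,e_n)$. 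This simultaneous choice of $P$ — diagonalising $S$ while aligning the (at most $(n-k+1)$-dimensional) family of nonzero $K$-directions into a rational subspace complementary to $S$ — is the step I expect to be the main obstacle, as it requires a dimension count on these directions rather than a purely formal manipulation. Granting Part~3, Part~2 falls out: were $k=n+1$, the range $n-k+1\le i\le n$ would be all of $0,\dots,n$, forcing every $K_i\otimes b$ to act as $0$ and hence $L=\emptyset$, contradicting the standing assumption that part of the center acts non-trivially; therefore $k<n+1$.

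Finally, for Part~5 I would observe that each $z_i$ with $i\ge n-k+1$ has degree $l_ie_i$, so by the definition of a central operator it commutes with $\dot{\mathfrak g}\otimes A\otimes B\oplus Z\otimes B$ and with $d_j$ for $0\le j\le n-k$; that is, $z_i$ commutes with the whole subalgebra $G_k:=\dot{\mathfrak g}\otimes A\otimes B\oplus Z\otimes B\oplus D_k$, the derivations $d_{n-k+1},\dots,d_n$ having been dropped precisely because they are the ones failing to commute with the $z_i$. Hence the $z_i$ form a commuting family of invertible $G_k$-endomorphisms of $V$ generating a Laurent polynomial algebra $R$ of operators that shift only the $\delta_{n-k+1},\dots,\delta_n$-part of the weight. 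Specialising $R$ at a maximal ideal $\mathfrak m$ collapses this periodicity, so each weight space of $V/\mathfrak m V$ relative to $\mathfrak h\oplus\bigoplus_{i=0}^n\mathbb C K_i\oplus D_k$ embeds into a single finite-dimensional weight space $V_\lambda$ and is therefore finite-dimensional; choosing $W$ to be a maximal proper $G_k$-submodule containing $\mathfrak m V$ then makes $V/W$ irreducible with finite-dimensional weight spaces, which is the last assertion.
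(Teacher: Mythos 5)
You handle parts 1 and 4 correctly, and along the same lines as the proofs this paper defers to (Theorem 4.5 of \cite{1}, Theorem 2.5 of \cite{5}): the degrees supporting nonzero central operators form a subgroup of $\mathbb{Z}^{n+1}$ containing $S$ by Lemma \ref{3.1}, and Smith normal form plus the automorphism $P$ puts $S$ into the form $\bigoplus_{i=n-k+1}^{n}\mathbb{Z}\,l_ie_i$. Your reduction of part 2 to part 3 also works, although the contradiction is with $\mathrm{rank}\,S=n+1\neq 0$, not with a ``standing assumption'' of non-trivial central action, which Theorem \ref{t3.1} does not make. Your sketch of part 5 is essentially the right construction, but note that a weight space of $V/\mathfrak{m}V$ with respect to the smaller Cartan is a \emph{quotient} of the sum of the finitely many weight spaces $V_{\lambda+\delta_{\underline s}}$, with $\underline s$ running over coset representatives of $\mathbb{Z}^k/\bigoplus_i l_i\mathbb{Z}$, not a subspace of a single $V_\lambda$, and one must also check that $\mathfrak{m}V\neq V$ and that maximal proper submodules exist (which follows since $V=U(G_k)v$ for any weight vector $v$).

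The genuine gap is part 3, exactly where you flagged it, and the route you propose cannot be repaired. Part 3 contains in particular the degree-zero assertion that the scalars $c_j(b)$ by which $K_j\otimes b$ acts (Lemma \ref{l2.2}) vanish for $j\geq n-k+1$; equivalently $\sum_j m_jc_j(b)=0$ for every $\underline m\in S$. The relation inherited from $dA$ is homogeneous: in degree $(r_0,\underline r)$ it reads $\sum_j r_jt_0^{r_0}t^{\underline r}K_j=0$, and in degree zero it is vacuous, so it gives no information whatsoever about the $c_j(b)$. Nor can the vanishing be \emph{arranged} by a further automorphism: under twisting by $P$ the degrees transform by $P^{-1}$ while, by (\ref{3.1}), the charge vector transforms by $P^{T}$, so the pairing $\sum_j m_jc_j(b)$, $\underline m\in S$, is invariant under every automorphism of $\tau(B)$. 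Hence your ``alignment'' step presupposes exactly the orthogonality that has to be proven; it is circular, and no lattice-theoretic dimension count can replace it. What is missing is a representation-theoretic argument combining finite-dimensionality of weight spaces with Lemma \ref{3.1}(2),(3). For instance: for $\underline m\in S$ let $z$ be an invertible central operator of degree $\underline m$, choose $h,h'\in\dot{\mathfrak h}$ with $(h,h')\neq 0$, and set $x=h\otimes t^{\underline m}\otimes b$, $y=h'\otimes t^{-\underline m}\otimes 1$, so that $[x,y]=(h,h')\sum_jm_jK_j\otimes b$ acts by the scalar $(h,h')\kappa$. The operators $X=z^{-1}x$ and $Y=zy$ have degree zero, hence preserve the finite-dimensional space $V_\lambda$, and since $z^{\pm1}$ commutes with $x$ and $y$ one gets $[X,Y]=[x,y]=(h,h')\kappa\,\mathrm{Id}$ on $V_\lambda$; taking the trace over $V_\lambda$ forces $\kappa=0$. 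Taking $\underline m=l_ie_i$ kills $K_i\otimes b$ for all $i\geq n-k+1$, and the same conjugation-and-trace argument applied to $[h\otimes t^{\underline m}\otimes b,\,h'\otimes t^{\underline r-\underline m}\otimes 1]$ with $\underline r\in S$, using that central operators of a given degree are unique up to scalar, disposes of the nonzero degrees. This step --- equivalently, the Heisenberg-algebra argument via Proposition \ref{p3.2}, with boundedness of weight multiplicities along $S$-directions supplied by the invertibility of central operators --- is the heart of the cited proofs, and it is precisely what your proposal lacks.
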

\begin{proof}
Proof of this theorem follows on the similar lines as in  Theorem 4.5 of \cite{1}.
\end{proof}
\begin{lemma}\label{ln4.2}
Let $(c_1,...,c_n)$ be a given non-zero vector in $\mathbb Z^n$. Then there exists a  $P\in GL(n,\mathbb Z)$ such that $P.(c_1,...,c_n)^T=(c_1',0,0,...0)^T$ and $c_1'$ is a positive integer.  
\end{lemma}
\begin{proof}
We  prove this lemma by induction on $n$. Let $(c_1,c_2)$ be a non-zero vector in $\mathbb Z^2$ and $d=gcd(c_1,c_2)$. Then there exists $p,q \in \mathbb Z$ such that $pc_1+qc_2=d$. Now consider the matrix,
  $$P= \begin{pmatrix}
p & q \\
\frac{-c_2}{d} & \frac{c_1}{d} \\ 
\end{pmatrix}  $$
It is easy to see that $P$ is our required matrix.\\
Assume that the result is true for $n=k.$ Let $(c_1,c_2,...,c_{k+1})$ be a non-zero vector in $\mathbb Z^{k+1}$. Then we can find two matrices $P_1$ and $P_2$ in $GL(n, \mathbb Z)$ such that $P_1(c_2,c_3,...,c_{k+1})^T=(c_2',0,...,0)^T$ and $P_2(c_1,c_2')^T=(c_1',0)^T.$ Then consider 
 $$ P= \begin{pmatrix}
 P_2& 0 \\
0&I_{k-1\times k-1} \\ 
\end{pmatrix}
 \begin{pmatrix}
1 & 0 \\
0 & P_1 \\ 
\end{pmatrix}    $$
Hence the result.

\end{proof}

 Assume that $K_i $ acting on $V$ as $c_i $ for $0\leq i\ \leq n$ and not all $c_i$ are zero. By Theorem \ref{t3.1}, we have $ c_n =c_{n-1} =..=c_{n-k+1}=0 $ upto an automorphism of $\tau(B)$, if the rank of S is $k$. By Lemma \ref{l2.3}, $c_i$'s are integer. Now using Lemma \ref{ln4.2} we can find a matrix $P_1\in GL(n-k+1, \mathbb{Z})$ such that $P_1(c_0,c_1,..,c_{n-k})=(c_0',0,0,..0) $ and $c_0'$ is a positive integer. Consider

\begin{equation*}
P= 
\begin{pmatrix}
P_1 & 0  \\
0 & I \\ 
\end{pmatrix}
\end{equation*}
where $I$ is the identity matrix of order $ k \times k$.  Thus using \ref{e3.1}, we can assume that only $K_0$ acts as a positive integer and all other $K_i$'s acts trivially on V upto an automorphism of $\tau(B)$.

\begin{proposition}\label{p3.1}
Let $V$ be an irreducible integrable module for $\tau(B)$ with finite dimensional weight spaces with respect to $H$. Also assume that $K_0$ acts by a positive integer $c_0$ and $K_i$ acts trivially on $V$, for $1 \leq i \leq n$. Then there exists a non-zero weight vector $v$ in $V$ such that $\tau(B)^{+}.v = 0$.
\end{proposition}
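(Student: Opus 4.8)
The plan is to produce the desired vector by a two–stage maximization, first in the $t_0$–direction and then in the remaining variables, exploiting the positivity of $c_0$ together with the finiteness of the weight spaces.

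\textbf{Stage A (bounding and maximizing the $d_0$–degree).} I would first show that the set of $t_0$–degrees $\{\lambda(d_0):\lambda\in P(V)\}$ is bounded above. The mechanism is the affine subalgebra $\dot{\mathfrak g}\otimes\mathbb{C}[t_0^{\pm1}]\otimes 1\oplus\mathbb{C}K_0\oplus\mathbb{C}d_0$, on which $K_0$ acts by the positive integer $c_0$. For a real root $\gamma=\alpha+m_0\delta_0$ one has $\gamma(d_0)=m_0$ and $\lambda(\gamma^\vee)=\lambda(\alpha^\vee)+\tfrac{2m_0}{(\alpha,\alpha)}c_0$, so the sign of $c_0$ makes $\lambda(\gamma^\vee)$ grow with $m_0$; combining this with the finiteness of each real–root string (proved exactly as in the Claim inside Lemma \ref{l2.1}) and with the $\Omega$–invariance and dimension equality $\dim V_\lambda=\dim V_{r_\gamma\lambda}$ of Lemma \ref{l2.3}, I would rule out arbitrarily large $t_0$–degrees. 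Setting $N_0=\max\{\lambda(d_0):\lambda\in P(V)\}$ and $W=\bigoplus_{\lambda(d_0)=N_0}V_\lambda$, every element of $\tau(B)^+$ carrying a factor $t_0^{m_0}$ with $m_0\ge1$ strictly raises the $d_0$–degree and therefore annihilates $W$; by inspection of the definition of $\tau(B)^+$, these account for both the second summand $\dot{\mathfrak g}\otimes t_0\mathbb{C}[t_0,t_1^{\pm1},\dots,t_n^{\pm1}]\otimes B$ and the central third summand.

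\textbf{Stage B (the horizontal directions).} It then remains to find inside $W$ a nonzero vector annihilated by the only degree–preserving part of $\tau(B)^+$, namely $\dot{\mathfrak g}^+\otimes\mathbb{C}[t_1^{\pm1},\dots,t_n^{\pm1}]\otimes B$. By construction $W$ is invariant under the horizontal multiloop algebra $\dot{\mathfrak g}\otimes A_n\otimes B$, and it is an integrable module with finite–dimensional weight spaces with respect to $\dot{\mathfrak h}\oplus\bigoplus_iK_i\oplus\bigoplus_i\mathbb{C}d_i$, since its weight spaces are just the $V_\lambda$ with $\lambda(d_0)=N_0$. Because all $K_i$ with $1\le i\le n$ act trivially, every root $\alpha+\delta_{\underline m}$ is real with $\gamma^\vee=\alpha^\vee$, so integrability controls the $\dot{\mathfrak g}$–components $\lambda|_{\dot{\mathfrak h}}$ in the usual $\dot\Omega$–invariant fashion. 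I would then maximize the height of the $\dot{\mathfrak g}$–component among the weights of $W$ (bounded above by the same string–finiteness argument, now applied to the finitely many roots of the finite–dimensional $\dot{\mathfrak g}$) and pass to the corresponding top subspace $W'$. Any root vector $X_\alpha\otimes t^{\underline m}\otimes b$ with $\alpha\in\dot\Delta^+$ raises this height, hence kills $W'$; a nonzero $v\in W'$ is therefore annihilated by all of $\dot{\mathfrak g}^+\otimes\mathbb{C}[t_1^{\pm1},\dots,t_n^{\pm1}]\otimes B$ and, already, by the degree–raising elements of Stage A, so that $\tau(B)^+v=0$.

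\textbf{Main obstacle.} The delicate point is the boundedness in Stage A: a priori the $d_0$–degrees could be unbounded above, and since the weight spaces of $V$ are finite–dimensional only with respect to the full torus $H$ and not with respect to the affine Cartan alone, one cannot simply invoke the classification of integrable affine modules. I would instead argue directly, in the spirit of \cite{1,11}: starting from the per–string finiteness of Lemma \ref{l2.1}, use the equal–dimension property of Lemma \ref{l2.3} to transport the bound across strings, and use $c_0>0$ to guarantee that increasing $m_0$ eventually forces $\lambda(\gamma^\vee)>0$, so no new extremal degrees can appear. Organizing this propagation to yield a \emph{uniform} upper bound, rather than a bound along each individual real–root string, is the real work; the identical issue recurs for the $\dot{\mathfrak g}$–height in Stage B and is handled the same way. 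Once uniform boundedness is established, the remaining steps are the routine maximization and degree–counting described above.
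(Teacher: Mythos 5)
Both stages of your maximization rest on boundedness claims that are genuine gaps, and the mechanisms you propose for closing them are not the ones that actually do the work in \cite{1,11} or in this paper. For Stage A, note first that the paper never proves (and never needs) that $\{\lambda(d_0):\lambda\in P(V)\}$ is bounded above; that statement is essentially as strong as the classification itself. What is proved instead is the existence of a single weight $\lambda$ that cannot be raised in the affine ordering $\leq_1$, and this requires two ingredients absent from your sketch: (i) Lemma~\ref{l3.3}, which shows that the set of weights with all $d_i$-eigenvalues fixed is \emph{finite} --- this uses complete reducibility of the $\dot{\mathfrak g}$-action on the corresponding graded piece together with the fact that every finite-dimensional irreducible $\dot{\mathfrak g}$-module contains one of the finitely many minuscule weights; and (ii) Lemma~\ref{l3.4} (Chari's argument): if every weight of $P(V)$ could be raised, one produces infinitely many pairwise non-isomorphic integrable highest weight modules $V(\lambda_i)$ for $\mathfrak{g}_{af}$ inside $V$, all sharing a common weight, contradicting finite-dimensionality of the weight spaces. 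Your proposed substitute --- per-string finiteness from Lemma~\ref{l2.1} plus ``transporting the bound across strings'' via the equal-dimension property of Lemma~\ref{l2.3} --- cannot work as stated: string finiteness bounds each real-root direction separately, with a bound depending on the string; weights can climb in $d_0$-degree by zig-zags through the infinitely many directions $\alpha+m_0\delta_0+\delta_{\underline m}$; and Weyl-group transport only equates dimensions along a single orbit, while $P(V)$ may a priori meet infinitely many orbits. You flag this as the main obstacle, but the repair you point to is not the idea that closes it.

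Stage B contains the same gap, and there you do not flag it. Inside $W$ the relevant raising operators are $X_\alpha\otimes t^{\underline m}\otimes b$ for \emph{all} $\underline m\in\mathbb{Z}^n$, i.e.\ the infinitely many root directions $\alpha+\delta_{\underline m}$, not ``the finitely many roots of the finite-dimensional $\dot{\mathfrak g}$''. A height-maximal weight exists only if the heights of $\lambda|_{\dot{\mathfrak h}}$ over $P(W)$ are bounded, and the minuscule-weight argument cannot establish this across $\underline m$-classes: hypothetical dominant weights of growing height in different classes would place their minuscule weights in \emph{different} weight spaces of $V$ (their $\delta_{\underline m}$-parts differ), so no contradiction with finite-dimensionality arises. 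This is exactly the difficulty that the paper's proof of Proposition~\ref{p3.1} handles by a different tool: starting from the $\leq_1$-maximal $\lambda$, it proves Claim 1 and the final step using the positive semi-definite form on $h_{af}^*$ together with $c_0>0$ (positivity enters precisely in the case $\alpha+\beta=k\delta_0$), showing that a raising by $\alpha+\delta_{\underline m}$ or by $m_0\delta_0+\delta_{\underline m}$ can occur at most once; moreover the highest weight vector may end up in $V_\mu$ with $\mu=\lambda+\alpha+\delta_{\underline m}$ or $\mu=\lambda+m_0\delta_0+\delta_{\underline m}$ rather than in $V_\lambda$. That last feature shows that no maximization of the type you propose --- which would locate the vector at an extremal weight --- can by itself reach the conclusion: the horizontal shifts $\delta_{\underline m}$ are invisible to both the $d_0$-degree and the $\dot{\mathfrak g}$-height, and they must be handled by the bilinear-form argument.
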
 
The Proof of Proposition \ref{p3.1} runs parallelly to the method used in \cite{1} to prove Proposition 2.4. Note that $\tau(B)^+.v=0$ implies that, if $v \in V_\lambda$ then $\lambda+\alpha \notin P(V)$ for all $\alpha\in \dot \Delta$. In particular, from \ref{l2.3}(4) we have, $\lambda(\alpha^\vee) \geq 0$, i.e $\lambda \in \dot{P^+}$. We call elements of $V_\lambda$ as highest weight vectors.

\begin{lemma}\label{ln4.3}
Let us consider $T=\{v \in V : \tau(B)^{+}v = 0 \}.$ Then\\
1. $T$ is an irreducible weight module for $\tau(B)^0$ with finite dimensional weight spaces.\\
2. Weights of $T$ are lies in the set  $ \{  \lambda + \delta_{\underline m} : \delta_{\underline m}= \displaystyle{\sum_{i=1}^{n}m_i \delta_{i}} ,  {\underline m} =(m_1,m_2,..,m_n) \in \mathbb{Z}^n  \}$ for some fixed weight $\lambda$ of $T$. In particular, for every weight $\mu$ of $T$, $\mu(\alpha_i^\vee)\geq 0$, for $1\leq i \leq d$.
\end{lemma}
\begin{proof}
It is easy to see that $T$ is $\tau(B)^0$ invariant and by Proposition \ref{p3.1}, $T \neq 0 .$ Moreover, viewing $T$ as $\tau(B)^0$ sub-module of $V$ and using the fact that sub-module of a weight module is weight module we have, 
  $ T=\displaystyle { \bigoplus_{\lambda \in H^*}}{V_{\lambda}} \cap T. $ Hence weight spaces of $T$ are finite dimensional.\\
Let $w$ be a non-zero vector of $T$ and $v$ be a non-zero weight vector of $T$ of weight $\lambda$ (say). Since $V$ is irreducible there exists a non-zero $X,Y \in U(\tau(B))$ such that $X.w=v$ and $Y.v=w$, where $X=X^-HX^+$ and $Y=Y^-H'Y^+,$ for some  $X^-, Y^-\in \tau(B)^-,H,H'\in \tau(B)^0, X^+,Y^+ \in \tau(B)^+$. Suppose $Y$ contains the $Y^-$ part in the expression, then $w$ will be a weight vector of weight $\lambda-\sum{\alpha} + \delta_{\underline{m}}$ for some roots $\alpha$ of $\mathfrak{\dot g} $, which is absurd by $X.w=v$. Thus $H^{'}.v=w$, which clearly implies that $X$ cannot contain $X^-$ part. Hence $T$ is an irreducible $\tau(B)^0$-module. Note that second part of the lemma immediately follows from 1.
  
\end{proof}

Now we are going to prove that the weight spaces of $T$ are uniformly bounded. Similar result has been proved in case of twisted toroidal extended affine Lie algebras in \cite{3}. For $1 \leq i \leq n$, consider the loop algebra $G_i= {\dot {\mathfrak g}} \otimes{\mathbb{C}[t_i^{\pm1}]} \oplus \mathbb{C}d_i$ and the lattice $M_i$ generated by $$\{t_{i,h}: h \in \mathbb{Z}[\dot{\Omega}(\beta^\vee)] \},$$ where $t_{i,h}:(\mathfrak{\dot h}\oplus \mathbb C d_i)^* \to (\mathfrak{\dot h}\oplus \mathbb C d_i)^*$ defined by
$t_{i,h}(\lambda)=\lambda-\lambda(h)\delta_i$. Note that ${t_{i,h}}^{-1}(\lambda)=\lambda+\lambda(h)\delta_i$.\\
Let $\Omega_i$ be the Weyl group of $G_i$. It is clear that $\Omega_i \subset \Omega$. Moreover, It is well known that Weyl group $\Omega_i$ is semi direct product of $M_i$ and $ \dot{\Omega}$, see [Proposition 6.5, \cite{13}] for more details.\\

\begin{lemma}\label{ln4.4}
Let $\lambda$ be a non-zero weight of $T$.\\
1. Either weight spaces of $T$ are one dimensional or $\lambda|_{\mathfrak{\dot h}}\neq 0$.\\
2. If $\lambda|_{\mathfrak{\dot h}}\neq 0$, then there exists a $r_\lambda \in \mathbb N$ and $w \in {\Omega}$ such that $w({{\lambda}} +\delta_{\underline r} )={\lambda}+ \displaystyle{\sum_{i=1}^{n}s_i \delta_i}$ such that $0 \leq s_i <r_{{\lambda}}.$

\end{lemma}
\begin{proof}
1. Let us assume $\lambda|_{\mathfrak{\dot h}}= 0$ and $v_\lambda$ be a non-zero weight vector of $T$. We assert that $ \mathfrak{\dot g}^-\otimes A_n \otimes B$ acts trivially on $v_\lambda$. If not, suppose $y_\alpha\otimes t^{\underline{m}} \otimes b$ acts non-trivially on $v_\lambda$. Then $\lambda-\alpha+\delta_{\underline{m}}$ will be a non-zero weight of $V$ and hence by Lemma \ref{l2.3}(2)
   $$ r_\alpha(\lambda-\alpha+\delta_{\underline{m}})=\lambda+\alpha+\delta_{\underline{m}} , $$ 
   will be a non-zero weight of $V$, a contradiction. Therefore $\mathfrak{\dot h}\otimes A_n\otimes B$ acts trivially on $v_\lambda$. Hence $W=\{v \in T:\mathfrak{\dot h}\otimes A_n\otimes B.v=0\}$ is non-zero sub-module of $T$. Therefore by irreducibility of $T$, we have $W=T$. Again, it is easy to see that as a Lie algebra $\mathfrak{\dot h}\otimes A_n\otimes B$ generates ${ \displaystyle{\bigoplus_{i=0}^{n} }}  { { \mathbb{C}[t_1^{\pm1},...,t_{n}^{\pm 1}]}}K_i \otimes B $. Therefore $T$ is an irreducible module for the abelian Lie algebra $\displaystyle {\bigoplus_{i=0}^{n}} {\mathbb{C}d_i} $. Now the result follows from the fact that each weight space of $T$ is invariant under $\displaystyle {\bigoplus_{i=0}^{n}} {\mathbb{C}d_i} $. \\
   2.  Let $r_{{\lambda}}=$min $\{$ $ {{\lambda}}(h) :{{\lambda}(h)} >0 , h \in \mathbb{Z}[\dot{\Omega}(\beta^\vee)]  \}$. This $r_{{\lambda}}$ exists and  $r_{{\lambda}} \in \mathbb N$, since $\mathbb{Z}[\dot{\Omega}(\beta^\vee)]$ is contained in the $ \mathbb{Z}$ linear span of ${\alpha_i}^\vee$ and $\lambda(\alpha_i^\vee)>0$ by Lemma \ref{ln4.3}(2), for $1\leq i \leq d. $ Let $r_{{\lambda}}= \lambda (h_0)$ for some $h_0 \in \mathbb{Z}[\dot{\Omega}(\beta^\vee)].$\\
 For each $1 \leq i \leq n$, consider ${{\lambda}} + r_i\delta_i$ . Then by Euclidean algorithm we have, $r_i=k_ir_{{\lambda}} + R_i$ for some $0 \leq  R_i<r_{{\lambda}}$ and $k_i \in\mathbb Z.$ Now, $${t_{i,h_0}}^{ k_i}( \lambda +  r_i\delta_i )= \lambda +(r_i- k_i{\lambda}(h_0))\delta_i= \lambda +R_i\delta_i.$$ 
   Therefore we have 
  $${\displaystyle{ \prod _{i=1}^{n}}{t_{i,h_0}}^{ k_i}}  ({{\lambda}} +\delta_{\underline r} )={\lambda}+ \displaystyle{\sum_{i=1}^{n}R_i \delta_i},$$ where $\displaystyle{ \prod _{i=1}^{n}}{t_{i,h_0}}^{ k_i}\in \displaystyle{ \prod _{i=1}^{n}{\Omega_i}}\subset \Omega.$
\end{proof}

\begin{theorem}\label{t3.2}
The dimensions of the weight spaces of $T$ are uniformly bounded.
\end{theorem}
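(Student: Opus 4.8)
The plan is to bound $\dim T_\mu$ by pushing the question into the ambient module $V$, where the Weyl group $\Omega$ acts on weight multiplicities (Lemma \ref{l2.3}), and then to fold every weight of $T$ back into the finite set $S$ using Corollary \ref{c3.1}. The point that legitimises this transfer is that on the weights actually carried by $T$ the module $V$ has nothing extra, that is, $T_\mu = V_\mu$ for every $\mu \in P(T)$ (writing $P(T)$ for the set of weights of $T$). Granting this identity, the multiplicities of $T$ inherit the $\Omega$-invariance of the multiplicities of $V$, even though $\Omega$ does not act on $T$ itself, because the reflections $r_\gamma$ with $\gamma=\alpha+r\delta_i$ use root vectors of $\dot{\mathfrak g}$ and so do not preserve $T$.

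First I would establish $T_\mu = V_\mu$ for every weight $\mu$ of $T$. Since $V$ is irreducible and $\tau(B)^+T=0$, we have $V=U(\tau(B)^-)T$, so $V_\mu=\sum_{\rho\in P(T)}U(\tau(B)^-)_{\mu-\rho}\,T_\rho$. As all weights of $T$ lie in the coset $\{\lambda+\delta_{\underline m}:\underline m\in\mathbb Z^n\}$, for $\rho\in P(T)$ the difference $\mu-\rho$ is a $\mathbb Z$-combination of $\delta_1,\dots,\delta_n$ alone, hence has zero $\dot{\mathfrak h}$-component and zero $\delta_0$-component. On the other hand, every root of $\tau(B)^-$ either carries a strictly negative $\delta_0$-component (the $t_0^{-1}\mathbb C[t_0^{-1},\dots]$- and $K_i$-summands) or has $\dot{\mathfrak h}$-part $-\alpha$ with $\alpha\in\dot\Delta^+$ (the $\dot{\mathfrak g}^-\otimes\mathbb C[t_1^{\pm1},\dots]$-summand), so a nonzero sum of such roots can never have both of those components equal to zero. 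Thus $U(\tau(B)^-)_{\mu-\rho}$ contributes only for $\rho=\mu$, where it equals $\mathbb C\cdot 1$, giving $V_\mu=T_\mu$; the same computation shows that any $\nu\in P(V)$ of the form $\lambda+\delta_{\underline s}$ automatically lies in $P(T)$ with $T_\nu=V_\nu$. This structural step is the main obstacle, and once it is in place the remainder is bookkeeping.

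Next I would run the Weyl reduction. Fix $\mu=\lambda+\delta_{\underline r}\in P(T)$ and apply Corollary \ref{c3.1} to its $\dot{\mathfrak h}$- and $\delta$-components to get $w\in\Omega$ with $w(\overline\lambda+\delta_{\underline r})=\overline\lambda+\sum_{i=1}^n s_i\delta_i$, $0\le s_i<R_i$. I would then check that $w$ sends the full weight $\mu$ to $\lambda+\sum_i s_i\delta_i\in S$: each generating reflection $r_\gamma$ with $\gamma=\alpha+r\delta_i$ satisfies $\mu(\gamma^\vee)=\overline\lambda(\alpha^\vee)$, because $K_i$ acts trivially for $i\ge1$, so $r_\gamma$ alters only the $\dot{\mathfrak h}$- and $\delta_i$-coordinates while fixing the $\omega_0$- and $\delta_0$-coordinates. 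Consequently the $\omega_0$-value of $w\mu$ stays $c_0$ and its $\delta_0$-value stays $\lambda(d_0)$, while its $\dot{\mathfrak h}$- and $\delta_{1..n}$-data follow exactly the motion of $\overline\lambda+\delta_{\underline r}$ in Corollary \ref{c3.1}, independently of the fixed $\omega_0,\delta_0$ coordinates. Hence $w\mu=\lambda+\sum_i s_i\delta_i\in S$.

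Finally I would assemble the chain. Since $w\mu\in P(V)$ by Lemma \ref{l2.3}(1) and has the form $\lambda+\delta_{\underline s}$, the structural step gives $w\mu\in P(T)$ and $T_{w\mu}=V_{w\mu}$. Combining the identity of the first step, the multiplicity invariance $\dim V_\mu=\dim V_{w\mu}$ from Lemma \ref{l2.3}(2), and $w\mu\in S$, we obtain
$$\dim T_\mu=\dim V_\mu=\dim V_{w\mu}=\dim T_{w\mu}\le N_0,$$
which is the assertion of the theorem.
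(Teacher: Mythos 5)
Your proof is correct and follows essentially the same route as the paper: fold an arbitrary weight of $T$ into the finite set $S$ via Corollary \ref{c3.1} and transfer the $\Omega$-invariance of multiplicities from Lemma \ref{l2.3}. The paper's own proof is just the citation of these two results; your structural step $T_\mu = V_\mu$ for $\mu$ in the coset $\lambda + \sum_{i\geq 1}\mathbb{Z}\delta_i$ (together with the check that the reflections $r_{\alpha+r\delta_i}$ fix the $\omega_0$- and $\delta_0$-coordinates at level $\mu(K_i)=0$) is exactly the detail needed to make that citation legitimate, since $\Omega$ acts on weights of $V$, not of $T$.
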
 
\begin{proof}
Consider the set $S_1 =\{ \mu: \mu ={\lambda + \displaystyle{\sum_{i=1}^{n}s_i \delta_i}}$, $0 \leq s_i < r_{{\lambda}}   \}$. This is a finite set and hence consider $N_0 =Max \{ dimT_{\mu}: \mu \in S_1 \}$. Then by Lemma \ref{l2.3}(2), Lemma \ref{ln4.3}(2) and Lemma \ref{ln4.4}, dimensions of weight spaces of $T$ are bounded by $N_0$.

\end{proof}

Now we are going to prove that a large part of the center acts trivially on our module. Before that we record a result from \cite{4}. 
\begin{proposition} \label{p3.2}
Let $\widetilde H$ be a Heisenberg Lie algebra consisting of basis $\{a_k:k \in \mathbb Z-\{0\} \}\cup\{c\}$ with multiplication 
$$[a_i,a_{-j}]=ic\delta_{ij}, \hspace{1cm} [a_i,c]=0 $$
for all $i,j \in \mathbb Z-\{0\}.$\\
Let $M=\displaystyle{\bigoplus_{i\in \mathbb Z}M_i}$ be a non-trivial $\mathbb Z$ graded $\widetilde H$ module on which $a_k$ acts an operator of degree k $($i.e $a_kM_i\subseteq M_{i+k})$ and $c$ acts as a non-zero scalar. Then dim$M_i's$ are not uniformly bounded.
\end{proposition}

\begin{proposition}\label{p3.3}
Let $V$ be a module for $\tau(B)$ satisfying the conditions of Proposition \ref{p3.1}. Then $ K_i\otimes b $ acts on $V$ trivially for $1\leq i \leq n$ and for all $ b \in B.$
\end{proposition}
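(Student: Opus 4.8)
The plan is to argue by contradiction and manufacture an honest Heisenberg algebra acting on the top space $T$, whose uniformly bounded weight multiplicities (Theorem \ref{t3.2}) collide with the partition-function growth forced by Proposition \ref{p3.2}. Suppose $K_j\otimes b$ does not act trivially for some $1\le j\le n$. By Lemma \ref{l2.2} each $K_j\otimes b'$ acts as a scalar $\chi_j(b')$, and $\chi_j$ is then a nonzero linear functional on $B$. A polarization argument lets me choose $b$ with $\chi_j(b^2)\neq 0$: if $\chi_j(b'^2)=0$ for every $b'$, then $2\chi_j(b_1b_2)=\chi_j((b_1+b_2)^2)-\chi_j(b_1^2)-\chi_j(b_2^2)=0$ for all $b_1,b_2$, and setting $b_2=1$ gives $\chi_j\equiv 0$, a contradiction.

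Next, I fix $h\in\dot{\mathfrak h}$ with $(h,h)\neq 0$ (such $h$ exists because the form is non-degenerate, hence nonzero, on $\dot{\mathfrak h}$) and set $X_m=h\otimes t_j^m\otimes b\in\tau(B)^0$ for $m\in\mathbb Z$. From the bracket of $\tau(B)$ I get $[X_m,X_{m'}]=(h,h)\,m\,(t_j^{m+m'}K_j\otimes b^2)$. The crucial structural point is that $s\,t_j^sK_j$ lies in $dA$, so $t_j^sK_j=0$ in $Z$ for every $s\neq 0$; hence $[X_m,X_{m'}]=0$ when $m+m'\neq 0$ while $[X_m,X_{-m}]=(h,h)\,m\,(K_j\otimes b^2)$. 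Thus the $X_m$ together with the central scalar $K_j\otimes b^2$ span a copy of the Heisenberg algebra $\widetilde H$ whose centre acts by $(h,h)\chi_j(b^2)\neq 0$.

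Finally I restrict to a single $\delta_j$-string in $T$. Fix a weight $\mu_0$ of $T$ and put $W=\bigoplus_{m\in\mathbb Z}T_{\mu_0+m\delta_j}$. Since $X_m$ has weight $m\delta_j$ and $K_j\otimes b^2$ has weight $0$, the space $W$ is a $\mathbb Z$-graded $\widetilde H$-module with graded pieces $W^{(m)}=T_{\mu_0+m\delta_j}$ of dimension at most $N_0$ by Theorem \ref{t3.2}. The cyclic submodule $M=U(\widetilde H)\,T_{\mu_0}$ is then finitely generated (as $\dim T_{\mu_0}\le N_0$), $\mathbb Z$-graded, has a finite-dimensional component, and its centre acts non-trivially, so Proposition \ref{p3.2} applies: $M$ is completely reducible with every irreducible summand isomorphic to some $\widetilde H^{\epsilon}(a)$ up to a grade shift. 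Since $\dim\widetilde H^{\epsilon}(a)_i=P(|i|)\to\infty$, the graded dimensions of $M$ are unbounded, contradicting $\dim M^{(m)}\le N_0$. Hence $\chi_j=0$, that is $K_j\otimes b=0$ on $V$ for all $b\in B$ and all $1\le j\le n$.

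The main obstacle is the correct set-up rather than any single computation: one must work inside the top space $T$, where Theorem \ref{t3.2} supplies \emph{uniform} bounds (unlike the ambient $V$, whose weight spaces are only finite-dimensional), and in a single loop direction $t_j$, so that the vanishing $t_j^sK_j=0$ in $Z$ for $s\neq 0$ turns the Cartan currents into a genuine Heisenberg algebra; once this is arranged, checking the finite-generation and non-trivial-centre hypotheses of Proposition \ref{p3.2} is routine.
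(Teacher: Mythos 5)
Your proof is correct and follows essentially the same route as the paper: a Heisenberg algebra of Cartan currents in a single loop direction $t_j$ acting on the top space $T$, with the uniform bound of Theorem \ref{t3.2} contradicting the partition-function growth forced by Proposition \ref{p3.2}. The only real difference is cosmetic: the paper pairs $h\otimes t_i^{l}\otimes b$ with $h'\otimes t_i^{-l}\otimes 1$, so the central element is $K_i\otimes b$ itself (nonzero by assumption) and your polarization step of finding $b$ with $\chi_j(b^2)\neq 0$ becomes unnecessary.
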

\begin{proof}
Fix $1\leq i \leq n$, $ b \in B$ and assume that 
$ K_i\otimes b $ acts on $V$ non-trivially. Let $ h, h' \in \mathfrak{ \dot h} $ be such that  $(h,h')\neq 0$ (this is possible due to the non-degenerate bilinear form on $\mathfrak{ \dot h}$). Consider the Heisenberg algebra $\widetilde{H}=$ span $ \{h\otimes{t_i^{l}}\otimes b , h'\otimes{t_i^{-l}}\otimes 1 , K_i\otimes b , l \in \mathbb{N} \}$ with the bracket 

$$[h\otimes{t_i^{n}}\otimes b, h'\otimes{t_i^{-m}}\otimes 1]=(h,h')n\delta_{nm}K_i\otimes b   $$ 
 and $K_i \otimes b$ is central in $\widetilde H.$\\
Let $M =U(\widetilde H)v_{\lambda} $, where $v_{\lambda}$ is a weight vector in $T$. Then $M = \displaystyle{\bigoplus_{r \in \mathbb{Z}}}M_r$ , where $M_r = M \cap T_{\lambda-r \delta_i}$, hence $dimM_r $'s are uniformly bounded by Theorem \ref{t3.2}, a contradiction to Proposition \ref{p3.2}.
\end{proof}
\begin{theorem}\label{t3.3}
Let $V$ be a module for $\tau(B)$ satisfying properties of Proposition \ref{p3.1}. Then the rank of $S$ (constructed in Theorem \ref{t3.1}) is $n$.
\end{theorem}
\begin{proof}
Let us assume that rank of $S$ be $k<n $.
By Proposition \ref{p3.1}, there exists a weight vector $v \in V$, say weight $\lambda$ such that $ \tau(B)^+v =0$. In particular, $h \otimes t_0^m t_{n-k}^r \otimes b v=0$ for all $h \in \mathfrak{\dot h}$,  $m>0 $ , $r \in \mathbb{Z}$ and $  b \in B$.  For $h(\neq 0) \in \mathfrak{\dot h}$, consider the following set of vectors

 $$S'=\{ h\otimes t_0^{-m} t_{n-k}^{r-d}\otimes b.h\otimes t_0^{-(m+1)} t_{n-k}^{d}\otimes b'.v : d \in \mathbb{Z} \}, $$ for some fixed $ m >0 , r \in \mathbb{Z} , b , b' \in B.$ \\
{\bf Claim:} The set $S'$ is linearly independent.
 Consider 
 \begin{align}
  \displaystyle\sum_{d \in \mathbb{Z}} a_d h\otimes t_0^{-m} t_{n-k}^{r-d}\otimes b.h\otimes t_0^{-(m+1)} t_{n-k}^{d}\otimes b'.v &= 0.    
 \end{align}
 consider $h' \in \mathfrak{h}$ such that $(h,h')\neq 0$ and applying $h'\otimes t_0^{m+1}t_{n-k}^s\otimes 1$ for any $s \in \mathbb{Z}$ on $ (3.2)$ we have
 
  $$\displaystyle\sum_{d \in \mathbb{Z}} a_d h\otimes t_0^{-m} t_{n-k}^{r-d}\otimes b.h'\otimes t_0^{m+1}t_{n-k}^s\otimes 1            .h\otimes t_0^{-(m+1)} t_{n-k}^{d}\otimes b'.v $$  
  $$+(h,h') \displaystyle\sum_{d \in \mathbb{Z}} a_d(m+1)  t_0 t_{n-k}^{r+s-d}K_0\otimes b.h\otimes t_0^{-(m+1)} t_{n-k}^{d}\otimes b'.v  $$ 
    $$+(h,h') \displaystyle\sum_{d \in \mathbb{Z}} a_d s  t_0 t_{n-k}^{r+s-d} K_{n-k}\otimes b.h\otimes t_0^{-(m+1)} t_{n-k}^{d}\otimes b'.v =0  $$ Now 2nd and 3rd term of the above sum is zero by Theorem \ref{t3.1}(4) and the 1st term imply

   $$\displaystyle\sum_{d \in \mathbb{Z}} a_d h\otimes t_0^{-m} t_{n-k}^{r-d}\otimes b. h\otimes t_0^{-(m+1)} t_{n-k}^{d}\otimes b'.h'\otimes t_0^{m+1}t_{n-k}^s\otimes 1.v $$ 
     $$+(h,h')\displaystyle\sum_{d \in \mathbb{Z}} a_d h\otimes t_0^{-m} t_{n-k}^{r-d}\otimes b.(m+1)t_{n-k}^{s+d}K_0\otimes b'              .v $$ 
      $$+(h,h')\displaystyle\sum_{d \in \mathbb{Z}} a_d h\otimes t_0^{-m} t_{n-k}^{r-d}\otimes b.st_{n-k}^{s+d}K_{n-k}\otimes b'              .v =0.$$ 
1st term of the above sum is zero, since $v$ is a highest weight vector and by Theorem \ref{t3.1}(4) 2nd , 3rd terms are zero, when $s+d \neq 0.$      
 If $s+d = 0$, then the sum is equal to 
$$(h,h') a_{-s} h\otimes t_0^{-m} t_{n-k}^{r+s}\otimes b.(m+1)K_0\otimes b'.v  $$ 
    $$+(h,h') a_{-s} h\otimes t_0^{-m} t_{n-k}^{r+s}\otimes b.sK_{n-k}\otimes b'.v =0 .$$ 
    Now since $k<n$, the last term is zero by Proposition \ref{p3.3} , hence only remaining term is  
    $$ a_{-s} h\otimes t_0^{-m} t_{n-k}^{r+s}\otimes b.(m+1)K_0\otimes b'.v=0 . $$
    Since not all $K_0\otimes b   $ acting on $V$ trivially and they acts as scalar, so to prove $a_{-s}=0$, it is sufficient to prove that $h\otimes t_0^{-m} t_{n-k}^{r+s}\otimes b$ acting on $V $ non trivially for some b.
    \\ Let $h\otimes t_0^{-m} t_{n-k}^{r+s}\otimes b.v=0$, for all b, then 
    $$h'\otimes t_0^m t_{n-k}^{-(r+s)}\otimes 1.h\otimes t_0^{-m} t_{n-k}^{r+s}\otimes b.v=0 $$ which imply 
    $$(h,h')K_0\otimes b.v -(h,h')K_{n-k}\otimes b.v=0 $$ for all b. \\ Thus $K_0\otimes b =0 $ for all b, by Proposition \ref{p3.3}, a contradiction, Hence the claim. \\ Hence we have proved that dimension of the weight space $ V_{\lambda-(2m+1)\delta_0 +r\delta_{n-k}}$ is infinite, a contradiction.

\end{proof} 
\subsection*{4.1}\label{rs4.1}Let $\mathfrak{g}_{af}= \dot{\mathfrak{g}}\otimes{\mathbb{C}[t_0^{\pm 1}]} \oplus{\mathbb{C}K_0} \oplus {\mathbb C}d_0   $ be the affine Lie algebra. For all $\alpha \in \dot{ \Delta}$ and $n \in \mathbb Z$, set

 $$(\mathfrak{g}_{af})_{\alpha +n \delta_1}=\mathfrak{\dot g}\otimes t_0^n,  $$
 $$(\mathfrak{g}_{af})_{ n \delta_1} =\mathfrak{\dot h}\otimes t_0^n, n \neq 0, $$
 $$\Delta_{af}^+=\{ \alpha+n\delta_1: \alpha \in \dot{ \Delta}, n>0\}\cup\{n\delta_1 : n> 0\} \cup \dot{ \Delta}^+, $$
$$\mathfrak{h}_{af}=\mathfrak{\dot h}\oplus K_0\oplus d_0.  $$
Then we have a triangular decomposition of $\mathfrak{g}_{af}$ as, 
  $$\mathfrak{g}_{af}=\mathfrak{g}_{af}^-\oplus \mathfrak{h}_{af} \oplus \mathfrak{g}_{af}^+  ,$$
  where  $\mathfrak{g}_{af}^{+}=\displaystyle{\bigoplus_{\alpha \in \Delta_{af}^+}  (\mathfrak{g}_{af})_{\alpha}}  $ and $\mathfrak{g}_{af}^{-}=\displaystyle{\bigoplus_{\alpha \in \Delta_{af}^+}  (\mathfrak{g}_{af})_{-\alpha}}  $
  \begin{remark}\label{rn4.1}
Now consider the current Kac-Moody Lie algebra $G= \mathfrak{g}_{af}'\otimes{A_n} \otimes B \oplus {\mathbb{C}d_0} .$
Define a surjective Lie algebra homomorphism 
$$\phi :  \dot{ \mathfrak{g}} \otimes A \otimes B \oplus  Z \otimes B \oplus {D}  \to \mathfrak{g}_{af}'\otimes{A_n} \otimes B \oplus {D}     $$

$$X \otimes{t_0^{r_0}t^{\underline r}}\otimes b \mapsto (X \otimes{t_0^{r_0}}) \otimes{ t^{\underline r} } \otimes b,  $$
 $$ t_0^{r_0}t^{\underline r} K_i \otimes b \mapsto 0 , \hspace{.1cm} 1 \leq i \leq n,$$
$$ t_0^{r_0}t^{\underline r} K_0 \otimes b \mapsto 0 , \hspace{.1cm}if \hspace{.1cm} r_0 \neq 0,$$
  $$ t_0^{r_0}t^{\underline r} K_0 \otimes b \mapsto  K_0 \otimes{t^{\underline r}} \otimes b, \hspace{.1cm}if \hspace{.1cm} r_0 = 0,$$
  $$d_i \mapsto d_i , $$
 for all $X \in \dot{\mathfrak{g}}$ , $b \in B$ and ${\underline r}=(r_1,r_2,...,r_n) \in \mathbb{Z}^n$ , $r_0 \in \mathbb{Z}, $ $0 \leq i \leq n$.\\
 Note that from Theorem \ref{t3.1} and \ref{t3.3}, we have the kernel of $\phi$ is acting trivially on the module $V$. Hence $V$ is an irreducible module for $\mathfrak{g}_{af}'\otimes{A_n} \otimes B \oplus {D}    $.
 \end{remark}
 
  By Theorem \ref{t3.1} and \ref{t3.3}, we have non-zero central operators $z_1,...,z_n$ of degree $(0,l_1,...,0),....,(0,...,l_n)$ respectively. Consider the space,
  $$W=span\{ z_nv-v:v \in V\}. $$

\begin{lemma}
$W$ is a proper $\mathfrak{g}_{af}'\otimes{A_n} \otimes B \oplus \{d_0,d_1,...,d_{n-1}\} $ sub-module of $V$. In particular, $V$ is reducible as  $\mathfrak{g}_{af}'\otimes{A_n} \otimes B \oplus \{d_0,d_1,...,d_{n-1}\} $-module
\end{lemma}
\begin{proof}
It is easy to see that $W$ is a $\mathfrak{g}_{af}'\otimes{A_n} \otimes B \oplus \{d_0,d_1,...,d_{n-1}\} $ sub-module of $V$. We show that $W$ cannot contain $\mathfrak{\dot h}\oplus K_0 \oplus D$ weight vector. Let $w$ be a weight vector in $W$. Then, there exists non-zero scalars $c_1,...,c_k$ and distinct weight vectors $v_1,...,v_k$  such that 
  $$w= \displaystyle{\sum_{i=1}^k}c_i(z_nv_i- v_i). $$ 
Note that for each $i$, either $z_nv_i$ have to lie in the weight space of some $v_j(j\neq i)$ or $c_iz_nv_i=w$. But if $c_iz_nv_i=w$ occur for some $i$ then it force that some $c_j$ have to be zero ( since there are $k-1$ pairs $c_iz_nv_i-c_jv_j$, so one $c_jv_j$ will remains). Hence we have, \begin{center}
 $c_mz_nv_m-c_rv_r=w$ for only one pair $(m,r)$ with $m\neq r$ and\\ $c_iz_nv_i=c_jv_j$ for all other pair(i,j). 
\end{center}  
Therefore after a suitable permutation of the vectors $v_i$  we can assume that, $c_iz_nv_i=c_{i+1}v_{i+1}$ for $1 \leq i \leq k-1$ and $c_kz_nv_k-c_1v_1=w$. From this it follows that if $v_1$ has weight $\mu$ then the weight of $z_nv_k$ will be $\mu +kl_n\delta_n$. This implies that either $c_1=0$ or $c_k=0,$ a contradiction. 
  
\end{proof}
 Let $W$ be a non-zero proper  $\mathfrak{g}_{af}'\otimes{A_n} \otimes B \oplus \{d_0,d_1,...,d_{n-1}\} $ sub-module of $V$.
 Let $\overline{\mu}$ be a weight of $V$ with respect to $\mathfrak{\dot h}\oplus K_0 \oplus D$ and let $\mu=\overline{\mu}|_{\mathfrak{\dot h}\oplus K_0 \oplus \{d_0,d_1,...,d_{n-1}\}}$. Let $w\in W$ be a non-zero $\mu$-weight vector. Then,  $w = \displaystyle{\sum_{i=1}^{m}}w_i $ such that $w_i \in V_{\overline{\mu}+k_i\delta_n}$, $k_1<k_2<...<k_m$ and $m\geq 2$, since $W$ cannot contain $\mathfrak{\dot h}\oplus K_0 \oplus D$ weight vector.\\
 
 Define $d(w)=k_m-k_1$ and $d(W)=min\{d(w):w $ is a non-zero $\mu$ weight vector $\}$.  A weight vector $w$ is said to be minimal if $d(w)$ is minimal. Now it will follow from the same proof as in Lemma 3.8 of \cite{22} that $d(W)$ is well defined. Moreover, with the same proof of Lemma 3.9, 3.10 and Proposition 3.11 we can find a maximal proper sub-module $W$ such that $V/W$ is irreducible and have finite dimensional weight spaces with respect to ${\mathfrak{\dot h}\oplus K_0 \oplus \{d_0,d_1,...,d_{n-1}\}}$. Now we consider $W_1=\{z_{n-1}v-v:v \in V/W\}$ and proceed like previously. Continuing this process $n$ times we have the following result. 

\begin{proposition}\label{pn4.4}
Let $V$ be an irreducible module for $\mathfrak{g}_{af}'\otimes{A_n} \otimes B \oplus {D}    $ with finite dimensional weight spaces. Then there exists a maximal proper  $\mathfrak{g}_{af}'\otimes{A_n} \otimes B \oplus {d_0}    $  sub-module $W$ of $V$ such $V/W$ is irreducible and have finite dimensional weight spaces with respect to  $\mathfrak{\dot h}\oplus K_0 \oplus d_0$
\end{proposition}

Recall the definition of highest weight module and classification theorem for irreducible modules of current Kac-Moody Lie algebras from \cite{2}.
\begin{definition}\label{d3.2}
A module $W$ for current Kac-Moody algebra $\mathfrak{g}_B$ is said to be a highest weight module if there exists a vector $w \in W$ such that
\\1. $U({\mathfrak{g}}_B)w = W,$ 
\\2. ${\mathfrak{g}'^+ \otimes B}.w=0,$
\\3. There exists a map $\eta : h'\otimes B \oplus h'' \mapsto \mathbb{C}$ such that $h.w=\eta(h)w$ for all $h \in  h'\otimes B \oplus h''. $ 
\end{definition}
\begin{theorem}\label{tn}(Theorem 3.4, \cite{2})
Let $V$ be an irreducible integrable highest weight module for current Kac-Moody Lie algebra $\mathfrak{g}_B $ with finite dimensional weight spaces. Then $V\simeq \displaystyle{\bigotimes_{i=1}^kV(\lambda_i)}$, for some $k \in \mathbb N,$ where $\lambda_i$'s are dominant integral weights of $\mathfrak g$ and $V(\lambda_i)$'s are irreducible highest weight modules for  $\mathfrak g$.
\end{theorem}

\begin{theorem}\label{t3.4}
Let $V$ be a module for $\tau(B)$ satisfying the conditions of Proposition \ref{p3.1}. Then upto an automorphism of $\tau(B)$ there exists an irreducible integrable highest weight module $\overline V$ for the current Kac Moody algebra $\mathfrak{g}_{af}'\otimes{A_n} \otimes B \oplus {\mathbb{C}d_0}$ with finite dimensional weight spaces such that $\overline V \simeq       
  \displaystyle{\bigotimes_{i=1}^kV(\lambda_i)}$, for some $k \in \mathbb N,$ dominant integral weights  $\lambda_i$ and irreducible modules $V(\lambda_i)$ for affine Lie algebras. 
\end{theorem}
\begin{proof}
By Theorem \ref{t3.1}, Theorem \ref{t3.3} and Proposition \ref{pn4.4}, we have an irreducible module $\overline V=V/W$ for $\mathfrak{g}_{af}'\otimes{A_n} \otimes B \oplus {\mathbb{C}d_0}$ with finite dimensional weight spaces and integrability of $V$ implies integrability for $\mathfrak{g}_{af}'\otimes{A_n} \otimes B \oplus {\mathbb{C}d_0}$. Moreover, if $v_\lambda$ be a highest weight vector for $V$, then $v_\lambda \notin W$, Otherwise $W$ will be whole of $V$. Hence $ \mathfrak{g}_{af}'^+\otimes{A_n} \otimes B.\overline v_\lambda=0$. Note that for all $h \in \mathfrak{\dot h}$, $\underline r \in \mathbb{Z}^n$ and $b \in B$, $h\otimes{t^{\underline r}}\otimes b,$ $ K_0\otimes{t^{\underline r}}\otimes b $ are central in $\mathfrak{g}_{af}'\otimes{A_n} \otimes B \oplus {\mathbb{C}d_0}$. Hence  by irreducibility of $\overline V$, $h\otimes{t^{\underline r}}\otimes b$ and  $ K_0\otimes{t^{\underline r}}\otimes b $ acts as scalar on $\overline V$. Therefore $\overline V$ is an highest weight module for  $\mathfrak{g}_{af}'\otimes{A_n} \otimes B \oplus {\mathbb{C}d_0}$ and hence the result by Theorem \ref{tn}.
\end{proof}

\section{Classification of modules when the center of toroidal Lie algebra acts non-trivially }

Let us consider $\widetilde G =\mathfrak{g}_{af}'\otimes{A_n} \otimes B \oplus {\mathbb{C}d_0} \oplus \widetilde D$, $\widetilde D$ be the space spanned by $d_1, d_2, ..., d_n.$ Then $G=\displaystyle {\bigoplus_{\underline m \in \mathbb{Z}^n }} G_{\underline m}$, where $G_{\underline m}=\{ X \in G: [d_i,X]=m_iX\}$
\\ Now define a  $\widetilde G $ module action on $(\overline V \otimes A_n, \rho(\underline \alpha))$ for any $\underline \alpha =(\alpha_1, \alpha_2,..,\alpha_n) \in \mathbb{C}^n $ by 
\begin{align}
X_r. (\overline v \otimes t^{\underline s})= (X_r. \overline v)\otimes t^{\underline r + \underline s} , X_r \in G_{\underline r}, \\
d_0.(\overline v \otimes t^{\underline s})=(d_0. \overline v)\otimes t^{\underline s} , \\ 
d_i.(\overline v \otimes t^{\underline s})=(\alpha_i+s_i)(\overline v \otimes t^{\underline s}),
\end{align}
for $1\leq i \leq n$. \\  
 
It is easy to see that this action defines a module structure on  $\overline V \otimes A_n$ for any $\underline \alpha \in \mathbb{C}^n .$ But we fix $\underline \alpha$ depending on the situation.
For the rest of the section we fix a highest weight vector $\overline v$ of $\overline V$.
\begin{lemma}\label{l4.1}
Any non-zero $\widetilde G$ sub-module of  $\overline V \otimes A_n$ contains $\overline v \otimes{t^{\underline s}}$ for some $\underline s\in \mathbb{Z}^n$.
\end{lemma}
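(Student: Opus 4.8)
The plan is to use that $\overline V\otimes A_n$ is a weight module for $\widetilde G$ with respect to the extended Cartan $H_1=\mathfrak h\oplus\mathbb C K_0\oplus\mathbb C d_0\oplus D$ (with $D=\langle d_1,\dots,d_n\rangle$), and then to run an extremal-weight argument that pushes any nonzero submodule down to a vector of the form $\overline v\otimes t^{\underline s}$. First I would pin down the shape of the $H_1$-weight vectors of $\overline V\otimes A_n$. Since $d_i$ acts on $\overline u\otimes t^{\underline s}$ by the scalar $\alpha_i+s_i$ for $1\le i\le n$, any $D$-weight vector can involve only a single monomial $t^{\underline s}$; requiring in addition that it be a weight vector for $\mathfrak h\oplus\mathbb C K_0\oplus\mathbb C d_0$ forces the form $\overline u\otimes t^{\underline s}$ with $\overline u$ a $\mathfrak g_{af}$-weight vector of $\overline V$ of some weight $\mu$, necessarily $\mu\le_1\lambda$, where $\lambda$ is the highest weight of $\overline V$. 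Because $H_1$ acts diagonalizably, any nonzero $\widetilde G$-submodule $W$ decomposes as the direct sum of its intersections with the weight spaces, so $W$ is spanned by vectors of this form.

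Next I would carry out the extremal step. To each weight vector $\overline u\otimes t^{\underline s}\in W$ with $\overline u$ of $\mathfrak g_{af}$-weight $\mu$, associate the \emph{height} of $\lambda-\mu$, i.e.\ the sum of the coefficients when $\lambda-\mu$ is written as a non-negative integral combination of the affine simple roots $\alpha_0,\dots,\alpha_d$. This height is a nonnegative integer (the $A_n$-degree does not affect the $\mathfrak g_{af}$-weight), so I may choose a nonzero weight vector $\overline u\otimes t^{\underline s}\in W$ for which it is minimal. I claim $\mu=\lambda$. If not, $\overline u$ is not a highest weight vector of $\overline V$, so by irreducibility (Theorem~\ref{t3.4}) some root vector $e\in{\mathfrak{g}_{af}'}^{+}\otimes t^{\underline r}\otimes B\subset G_{\underline r}$, of positive affine root $\gamma$, satisfies $e\,\overline u\neq0$. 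Then $e\cdot(\overline u\otimes t^{\underline s})=(e\,\overline u)\otimes t^{\underline r+\underline s}$ is a nonzero element of $W$ whose $\mathfrak g_{af}$-component has weight $\mu+\gamma\le_1\lambda$, so the height of $\lambda-(\mu+\gamma)$ is strictly smaller than that of $\lambda-\mu$, contradicting minimality. Hence $\mu=\lambda$, which gives $\overline u\in\overline V_\lambda=\mathbb C\overline v$, and therefore $W$ contains $\overline v\otimes t^{\underline s}$.

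The step requiring the most care, and the main obstacle, is justifying the two structural facts about the irreducible highest weight module $\overline V$ that make the argument run: that the top weight space $\overline V_\lambda$ is one-dimensional, and that the only $\mathfrak g_{af}$-weight vectors of $\overline V$ annihilated by the entire positive part ${\mathfrak{g}_{af}'}^{+}\otimes A_n\otimes B$ are the multiples of $\overline v$. Both reduce to the identity $\overline V=U({\mathfrak{g}_{af}'}^{-}\otimes A_n\otimes B)\,\overline v$ coming from Theorem~\ref{t3.4}: a weight vector killed by the positive part generates a submodule all of whose weights are $\le_1$ its own, which is a proper nonzero submodule unless that weight equals $\lambda$, contradicting irreducibility; and the only way to produce weight $\lambda$ inside $U({\mathfrak{g}_{af}'}^{-}\otimes A_n\otimes B)\,\overline v$ is through the degree-zero scalars, so $\overline V_\lambda=\mathbb C\overline v$. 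I would also record that the heights occurring in $W$ form a nonempty subset of $\mathbb Z_{\ge0}$, which guarantees the minimizer used above exists.
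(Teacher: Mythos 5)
Your proof is correct, but it takes a genuinely different route from the paper's. The paper introduces the degree-forgetting map $\phi:\overline V\otimes A_n\to\overline V$, $w\otimes t^{\underline m}\mapsto w$, observes that it is a surjective $G$-module map (though not a $\widetilde G$-map), and argues as follows: any nonzero $\widetilde G$-submodule $W$ is a weight module, hence contains some $w\otimes t^{\underline m}$, so $\phi(W)\neq 0$; irreducibility of $\overline V$ as a $G$-module then forces $\phi(W)=\overline V$; picking a preimage in $W$ of the highest weight vector $\overline v$ and using that the $G$-weight space of $\overline V\otimes A_n$ of weight $\lambda$ is spanned by the vectors $\overline v\otimes t^{\underline r}$, this preimage is a finite sum $\sum_i a_i\,\overline v\otimes t^{\underline r_i}$, and the $\mathbb{Z}^n$-gradedness of $W$ puts each $\overline v\otimes t^{\underline r_i}$ in $W$. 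You instead stay entirely inside $W$ and run the classical extremal-weight argument: choose a weight vector $\overline u\otimes t^{\underline s}\in W$ of minimal height and show minimality forces the $\mathfrak{g}_{af}$-weight of $\overline u$ to equal $\lambda$, whence $\overline u\in\overline V_\lambda=\mathbb{C}\overline v$. Both proofs rest on the same two structural facts --- that weight vectors of $\overline V\otimes A_n$ have the form $\overline u\otimes t^{\underline s}$, and that $\overline V_\lambda=\mathbb{C}\overline v$ --- but the mechanisms differ: the paper's projection argument outsources the work to the irreducibility of $\overline V$ under $G$ and is shorter, while your height-minimality argument avoids the auxiliary, non-$\widetilde G$-equivariant map altogether and makes explicit (in your final paragraph) the characterization of vectors annihilated by ${\mathfrak{g}_{af}'}^{+}\otimes A_n\otimes B$, a fact the paper's proof uses only implicitly; this makes your version somewhat more portable to situations where no convenient equivariant projection is available. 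Your justifications of the two structural facts via $\overline V=U({\mathfrak{g}_{af}'}^{-}\otimes A_n\otimes B)\overline v$ are sound, and the minimizer exists since heights lie in $\mathbb{Z}_{\geq 0}$, so there is no gap.
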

\begin{proof}
Let $W$ be a non-zero  $\widetilde G$ sub-module of  $\overline V \otimes A_n$. Consider the the map $\phi:\overline V \otimes A_n\to \overline V $ defined by $\phi (w \otimes{ t^{\underline m}})=w$. It is easy to check that $\phi $ is a surjective $G$ module map (but not $\widetilde G$ ).\\
{\bf Claim:} $\phi(W)=\overline V $. Since $\phi$ is surjective and $\overline V $ is irreducible $G$-module so it is sufficient to show $\phi(W) \neq 0 $. 
\\ Let $\overline V =\displaystyle{\bigoplus_{\lambda \in \underline{ h}^*}\overline V_{\lambda}}$ with respect to the Cartan sub-algebra $\underline{ h}=$span $\{\mathfrak h, K_0, d_0 \}$. Then weight spaces of $\overline V \otimes A_n$ are $(\overline V \otimes A_n)_{\lambda + \delta_{\underline r}}= \overline{V_{\lambda}}\otimes {t^{\underline r}} $ with respect to the Cartan, span$ \{\mathfrak h, K_0, d_0 , \widetilde D \} .$ Hence $W $ is also a weight module, so contains a vector of the form $w \otimes{t^{\underline m}} $ for some weight vector $w \in \overline V$, therefore $\phi(W)\neq 0$. 
\\ Now $\overline v \in \overline V$, so there exists a vector $w \in W$ such that $\phi(w)=\overline v$. Since $\phi$ is a $G$ module map, hence $w$ and $\overline v$ has same weight as $G$ module. But $\{ \overline v \otimes {t^{\underline r}} : \underline r \in \mathbb{Z}^n \} $ are the only weights in $\overline V \otimes A_n$ of same weight that of $\overline v$. Hence $w =\displaystyle {\sum_{finite} a_i \overline v \otimes {t^{\underline r_i}}}$. Now $W$ is a $\mathbb{Z}^n$ graded sub-module of $\overline V \otimes A_n$, hence $\overline v \otimes {t^{\underline r_i}} \in W$ for all i. 
\end{proof}  
\begin{remark}\label{rn5.1}
 Consider $\widetilde G$-submodules $U(\widetilde G)\overline v \otimes {t^{\underline r}}$ and $U(\widetilde G)\overline v \otimes {t^{\underline s}}$ and define a map $\phi :U(\widetilde G)\overline v \otimes {t^{\underline r}} \to U(\widetilde G)\overline v \otimes {t^{\underline s}} $ by $\phi(w \otimes {t^{\underline k}})=w \otimes{t^{\underline k + \underline s -\underline r}}$, then $\phi$ is a $G$-module isomorphism $($but not $\widetilde G$-module morphism$)$. Now we define grade shift isomorphism between $U(\widetilde G)\overline v \otimes {t^{\underline r}}$  and $U(\widetilde G)\overline v \otimes {t^{\underline s}}$. Let $\widetilde D$ acts on  $\overline v \otimes {t^{\underline r}}$ by a scalar $\underline\alpha =(\alpha_1, \alpha_2,..., \alpha_n) \in \mathbb{C}^n$, i.e $d_i.\overline v \otimes {t^{\underline r}}=\alpha_i\overline v \otimes {t^{\underline r}}$ and acts on $\overline v \otimes {t^{\underline s}}$ by a scalar $\underline \beta=(\beta_1,\beta_2,..., \beta_n)$. Let $\mathbb{C}$ be a one dimensional module for $\widetilde G$ such that $G.\mathbb{C} = 0$ and $D.\mathbb{C}$ acts as a scalar $\underline \beta -\underline \alpha$. Then clearly $U(\widetilde G)\overline v \otimes {t^{\underline r}} \simeq U(\widetilde G)\overline v \otimes {t^{\underline s}} \otimes {\mathbb{C}}$ as $\widetilde G$ module. we call this as grade shift isomorphism.
\end{remark}

\begin{lemma}\label{ln5.2}
$U(\widetilde G)\overline v \otimes {t^{\underline r}}$ are irreducible $\widetilde G$ module for all $\underline r \in \mathbb{Z}^n$. In fact any irreducible $\widetilde G$-sub-module of $\overline V \otimes {A_n}$ is of the above form. 
\end{lemma}
\begin{proof}
By Remark \ref{rn4.1}, $V$ is an irreducible module for $\widetilde G.$ Therefore $V$ can be written as, $$V=\displaystyle{\bigoplus_{\underline r \in \mathbb{Z}^n}}V_{\underline r},$$ where $V_{\underline r}= \{ v \in V : d_i.v=(\lambda(d_i)+r_i )v $ for $ 1\leq i \leq n \} $ for some weight $\lambda$ of $V$. Define a map $ \psi : V \mapsto \overline V \otimes A_n $ by, 
\begin{center}
$\psi(v_r)=\overline v \otimes t^{\underline r}$ if $v_{\underline r} \in V_{\underline r}.$
\end{center} 

Then $\psi$  is a $\widetilde G$ module map for the fixed choice of $\underline \alpha =(\lambda(\alpha_1),\lambda(\alpha_2),...,\lambda(\alpha_n))\in \mathbb{C}^n $. In fact $\psi $ is non zero, since $\psi(v)=\overline v \otimes 1$ for any $v \in V_\lambda$. Thus $\psi $ is injective, since $V$ is irreducible. Hence $\psi(V)$ is an irreducible sub-module of $\overline V \otimes A_n.$ Hence there exists an irreducible sub-module for $\overline V \otimes {A_n}$.\\
Let $W$ be any irreducible sub-module of  $\overline V \otimes {A_n}$. By  Lemma \ref{l4.1}, $W$ contains $\overline v \otimes {t^{\underline r}}$ for some $\underline r \in \mathbb{Z}^n$, so $U(\widetilde G)\overline v \otimes {t^{\underline r}} = W$. Hence by Remark \ref{rn5.1}, $U(\widetilde G)\overline v \otimes {t^{\underline r}}$ is irreducible for all  $\underline r \in \mathbb{Z}^n$. 
\end{proof}
\begin{lemma}\label{l4.3}  
Each $z$ in $Z\otimes B$ acts as scalar on $\overline V$. Further if $z$ in $Z\otimes B$ acts as non zero on $V$ then it acts as non zero scalar on $\overline V $ 
\end{lemma}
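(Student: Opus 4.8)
The plan is to transfer the question to the module $\overline V\otimes A_n$ and to exploit the fact that the $\tau(B)$-action on $V$ factors through the surjection onto $\widetilde G$ whose kernel acts trivially. First I would record that, by Theorems \ref{t3.1} and \ref{t3.3}, every generator $t_0^{r_0}t^{\underline r}K_i\otimes b$ of $Z\otimes B$ with $i\geq 1$, or with $i=0$ and $r_0\neq 0$, annihilates $V$; these lie in the kernel of $\phi$ and hence act as $0$ on $\overline V$. The only generators that survive are $t^{\underline r}K_0\otimes b$, which $\phi$ sends to $K_0\otimes t^{\underline r}\otimes b$. Since $K_0$ is central in $\mathfrak g_{af}'$ and $t^{\underline r}$ has $t_0$-degree zero, $K_0\otimes t^{\underline r}\otimes b$ is central in $G$, commutes with all of $G$, and preserves every (finite dimensional) weight space of $\overline V$ relative to $\mathfrak h\oplus\mathbb C K_0\oplus\mathbb C d_0$. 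The Schur-type argument of Lemma \ref{l2.2} then forces it to act by a scalar $s_{\underline r,b}\in\mathbb C$. As a linear combination of scalar operators is again scalar, this proves the first assertion: every $z\in Z\otimes B$ acts as a scalar on $\overline V$.

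For the second assertion I would use the embedding $\psi\colon V\xrightarrow{\ \sim\ }U(\widetilde G)(\overline v\otimes t^{\underline r})\subseteq \overline V\otimes A_n$ of Remark \ref{r4.2} and Lemma \ref{l4.2}, under which the $\tau(B)$-action on $V$ becomes the $\widetilde G$-action, $z$ acting through its image in $\mathbb C K_0\otimes A_n\otimes B$. It suffices to treat a homogeneous $z=t^{\underline r}K_0\otimes b$ of degree $(0,\underline r)$, the general element being the sum of its $\mathbb Z^n$-homogeneous pieces, which act in distinct $A_n$-degrees on $\psi(V)$ and therefore cannot cancel there. On $\overline V\otimes A_n$ the element $K_0\otimes t^{\underline r}\otimes b$ acts by
$$\overline w\otimes t^{\underline s}\ \longmapsto\ \bigl(K_0\otimes t^{\underline r}\otimes b\bigr).\overline w\ \otimes\ t^{\underline r+\underline s}\ =\ s_{\underline r,b}\,\overline w\otimes t^{\underline r+\underline s},$$
where $s_{\underline r,b}$ is the scalar produced in the first part. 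Because multiplication by $t^{\underline r}$ is invertible in $A_n$, the grade-shift $\overline w\otimes t^{\underline s}\mapsto \overline w\otimes t^{\underline r+\underline s}$ is a bijection of $\overline V\otimes A_n$; hence this operator is nonzero on $\psi(V)\cong V$ precisely when $s_{\underline r,b}\neq 0$. Thus $z$ acts nontrivially on $V$ if and only if it acts as a nonzero scalar on $\overline V$, which is the claim.

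The main obstacle is the clash of Cartan subalgebras: on $V$ the element $t^{\underline r}K_0\otimes b$ is homogeneous of nonzero degree $(0,\underline r)$ and genuinely shifts weights (it is one of the invertible central operators of Lemma \ref{3.1}), whereas on $\overline V$ the derivations $d_1,\dots,d_n$ have been discarded, so the same element collapses to a scalar. Reconciling \emph{shifts weights on $V$} with \emph{is a scalar on $\overline V$} is exactly what the passage through $\overline V\otimes A_n$ achieves, the degree shift being absorbed into the invertible $A_n$-factor. The one point demanding care is the reduction to homogeneous $z$: on $\overline V$ the scalars $s_{\underline r,b}$ for different $\underline r$ add together and could a priori cancel, while on $V$ the corresponding operators occupy different $A_n$-degrees and cannot, so the equivalence is cleanest when $z$ is taken homogeneous and is then extended to $Z\otimes B$ degree by degree.
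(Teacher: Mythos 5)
Your proof is correct, and for the crucial second assertion it takes a genuinely different route from the paper. The paper disposes of the lemma by citing the theory of central operators (Lemma \ref{3.1}, i.e.\ Lemmas 1.7--1.8 of \cite{5}): a homogeneous $z$ that is nonzero on the irreducible module $V$ is an \emph{invertible} central operator, so $zV=V$; since $\overline V=V/W$ with $W$ a proper submodule, $zV=V\not\subseteq W$ forces $z$ to be nonzero on the quotient $\overline V$, and being a scalar there by the first part, that scalar is nonzero. You instead transport the question into $\overline V\otimes A_n$ through the embedding $\psi$ of Remark \ref{r4.2}, where $\phi(z)=K_0\otimes t^{\underline r}\otimes b$ acts as a scalar composed with the invertible grade shift $\overline w\otimes t^{\underline s}\mapsto \overline w\otimes t^{\underline r+\underline s}$, so that nonvanishing on $\psi(V)\cong V$ is visibly equivalent to the scalar being nonzero. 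Both arguments are sound; yours is not circular (Remark \ref{r4.2} and Lemmas \ref{l4.1}, \ref{l4.2} nowhere use Lemma \ref{l4.3}), and it makes transparent how an operator that shifts weights on $V$ can collapse to a scalar on $\overline V$, while the paper's route is shorter and works directly with $V/W$ without the auxiliary module.

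Your closing caveat is a real catch rather than a pedantic one: for non-homogeneous $z$ the second assertion is literally false. Indeed $K_0\otimes 1$ acts by $c_0\neq 0$ on both $V$ and $\overline V$, and since rank $S=n\geq 1$ there is a homogeneous $z_2=t^{\underline r}K_0\otimes b$ of degree $\underline r\neq 0$ that is nonzero on $V$, hence acts by a scalar $s\neq 0$ on $\overline V$; then $s(K_0\otimes 1)-c_0\,z_2$ acts as the zero scalar on $\overline V$ yet is nonzero on $V$, because its two components shift weights by different amounts and cannot cancel. So the lemma holds exactly in the homogeneous, degree-by-degree form in which you prove it --- which is also the only form the paper ever uses, namely for the central operators $z_i$ of degree $\underline{l_i}$ introduced immediately after the lemma.
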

\begin{proof}
Follows from Lemma 3.1 of \cite{5}.
\end{proof}  
Consider non-zero central operators $z_1,...,z_n$ of degree $(0,l_1,...,0),....,(0,...,l_n)$ respectively. Let $\underline{l_i}=(0,0,..,l_i,..0)  $ and $z_i$ acts on $\overline V$ by $l_i'.$ Then by Lemma \ref{l4.3}, $l_i'\neq 0.$ It is easy to see that, we have $z_i(w \otimes {t^{\underline m}})=l_i'(w\otimes{t^{{\underline m+\underline l_i}}} )$, this implies that $z_i$ is an invertible central operator on  $\overline V \otimes {A_n}$. Let 
     $$\Gamma=l_1 \mathbb{Z}+ l_2 \mathbb{Z}+ .....+ l_n \mathbb{Z}. $$  
Then for all $\underline s =(l_1s_1,..,l_ns_n)\in \Gamma,$ there exists an invertible central operator $ z_{\underline s} = \prod_{i=1}^{n}z_i^{s_i}$ on $\overline V \otimes A_n $. Note that $z_{\underline s} (U(\widetilde G)\overline v \otimes {t^{\underline r}})\subseteq (U(\widetilde G)\overline v \otimes {t^{\underline r + \underline s}}) $ and hence  $z_{\underline s}(U(\widetilde G)\overline v \otimes {t^{\underline r}})= (U(\widetilde G)\overline v \otimes {t^{\underline r + \underline s}})$ .\\

 Let $S_0$ be the space spanned by all central operators of the form $z_{\underline s}$ for all $\underline s \in \Gamma.$ Also let $U( G) = \displaystyle{\bigoplus_{\underline m \in \mathbb{Z}^n}}U(G)_{\underline m} $, where $U(G)_{\underline m }= \{ X \in U(G) : [d_i,X]=m_iX , 1\leq i \leq n \}$. 
\begin{lemma}(Theorem 3.5,\cite{17})\label{ln5.4}
 Let $M$ be a module for an associative ring $R$ with unity.
Then the following statements are equivalent:\\
1. $M$ is the sum of a family of irreducible sub-modules.\\
2. $M$ is the direct sum of a family of irreducible sub-modules.\\
3. Every sub-module of $M$ is a direct summand.
\end{lemma}
\begin{proposition}\label{p4.1}
1. $\overline V \otimes A_n=\displaystyle {\sum_{\underline s \in \mathbb{Z}^n} (U(\widetilde G)\overline v \otimes {t^{\underline s}})} = \displaystyle{\bigoplus_{\underline s \in F} (U(\widetilde G)\overline v \otimes {t^{\underline s}})}$, where $F\subseteq \{(s_1,s_2,..,s_n) : 0\leq s_i <l_i \} $ as $\widetilde G $ module. \\
2. $V \simeq (U(\widetilde G)\overline v \otimes 1)$ as $\widetilde G $ module.
\end{proposition}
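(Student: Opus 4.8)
The plan is to treat the two statements in turn, using the irreducibility results of Lemmas \ref{l4.1} and \ref{l4.2}, the complete-reducibility criterion of Lemma \ref{l3.2}, and finally the intertwiner of Remark \ref{r4.2}. Throughout I write $M_{\underline s}=U(\widetilde G)(\overline v\otimes t^{\underline s})$. I would begin with the generation statement of part 1. Since $\overline V$ is a highest weight module for $G$ we have $\overline V=U(G)\overline v$, and $U(G)=\bigoplus_{\underline m\in\mathbb Z^n}U(G)_{\underline m}$ by the action of $D$. Given an arbitrary spanning vector $w\otimes t^{\underline p}$ of $\overline V\otimes A_n$, write $w=\sum_{\underline m}X_{\underline m}\overline v$ with $X_{\underline m}\in U(G)_{\underline m}$; then the defining action gives $X_{\underline m}\cdot(\overline v\otimes t^{\underline p-\underline m})=(X_{\underline m}\overline v)\otimes t^{\underline p}$, so $w\otimes t^{\underline p}=\sum_{\underline m}X_{\underline m}\cdot(\overline v\otimes t^{\underline p-\underline m})\in\sum_{\underline s}M_{\underline s}$. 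Hence $\overline V\otimes A_n=\sum_{\underline s\in\mathbb Z^n}M_{\underline s}$.

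Next, each $M_{\underline s}$ is irreducible by Lemma \ref{l4.2}, so $\overline V\otimes A_n$ is a sum of irreducible $\widetilde G$-submodules; by Lemma \ref{l3.2} it is then a direct sum $\bigoplus_{\underline s\in F}M_{\underline s}$ over some index set $F\subseteq\mathbb Z^n$. To cut $F$ down to the box $\{0\le s_i<l_i\}$ I would show that $M_{\underline r}$ depends only on $\underline r$ modulo $\Gamma=l_1\mathbb Z+\cdots+l_n\mathbb Z$. The crucial observation is that the central operator $z_i$ of degree $\underline{l_i}$ is, up to a nonzero scalar, the action of the element $K_0\otimes t^{\underline{l_i}}\otimes b_i\in G\subseteq\widetilde G$; indeed this element is central in $G$ because $K_0$ is central in $\mathfrak g_{af}'$, and it acts by the nonzero scalar $l_i'$ on $\overline V$ by Lemma \ref{l4.3}. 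Since $z_i$ thus lies in the image of $U(\widetilde G)$, it preserves every $\widetilde G$-submodule; in particular $z_i(\overline v\otimes t^{\underline r})=l_i'\,\overline v\otimes t^{\underline r+\underline{l_i}}\in M_{\underline r}$, whence $M_{\underline r+\underline{l_i}}\subseteq M_{\underline r}$, and equality follows as both are nonzero and irreducible. Iterating gives $M_{\underline r}=M_{\underline r+\underline s}$ for all $\underline s\in\Gamma$, so the distinct submodules are indexed by $\mathbb Z^n/\Gamma$; restricting the direct sum to a set of coset representatives inside the box yields $F\subseteq\{0\le s_i<l_i\}$.

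For part 2 I would invoke the injective $\widetilde G$-module map $\psi:V\to\overline V\otimes A_n$ of Remark \ref{r4.2}, constructed for the fixed choice of $\underline\alpha$, which sends any $v\in V_\lambda$ to $\overline v\otimes 1$. Its image $\psi(V)$ is an irreducible submodule containing $\overline v\otimes 1$, so $M_{\underline 0}=U(\widetilde G)(\overline v\otimes 1)\subseteq\psi(V)$, and irreducibility of $\psi(V)$ forces $\psi(V)=M_{\underline 0}$. Therefore $V\cong M_{\underline 0}=U(\widetilde G)(\overline v\otimes 1)$ as $\widetilde G$-modules, the grade shift accounting for the constant offset $\underline\alpha$ between the $D$-eigenvalues on $V$ and the standard grading used on $\overline V\otimes A_n$ (cf. Remark \ref{r4.1}).

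The routine parts are the generation argument and the appeals to Lemmas \ref{l4.2} and \ref{l3.2}. The main obstacle is the index-set reduction in part 1: one must recognize the abstractly-defined invertible central operators $z_i$ as honest elements of $\widetilde G$ (equivalently, realize them through the surviving central elements $K_0\otimes t^{\underline{l_i}}\otimes b_i$), for only then do they preserve the irreducible submodules $M_{\underline r}$ and produce the $\Gamma$-periodicity $M_{\underline r}=M_{\underline r+\underline{l_i}}$ that makes $F$ finite. A secondary point to check carefully is the $\widetilde G$-equivariance of $\psi$ in Remark \ref{r4.2}, since the quotient map $V\to\overline V$ is only a $G$-map and compatibility with $d_1,\dots,d_n$ is precisely what the grading bookkeeping in $\rho(\underline\alpha)$ is designed to supply.
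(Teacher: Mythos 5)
Your overall architecture is the same as the paper's: the grading argument for generation, the central operators $z_{\underline s}$ to fold $\mathbb{Z}^n$ down to the finite box, Lemma \ref{l3.2} for the direct sum, and Remark \ref{r4.2} together with Remark \ref{r4.1} for part 2 (your part 2 is in fact slightly more direct than the paper's, which identifies $\psi(V)$ with some $M_{\underline s}$, $\underline s \in F$, rather than with $M_{\underline 0}$ straight away; here $M_{\underline s}:=U(\widetilde G)(\overline v\otimes t^{\underline s})$). The genuine gap is your ``crucial observation'' that $z_i$ is, up to a nonzero scalar, the action of an element $K_0\otimes t^{\underline{l_i}}\otimes b_i\in G$. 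Theorem \ref{t3.1} does not provide this: it produces $z_i$ only as an abstract central \emph{operator} of degree $\underline{l_i}$, and in its construction $z_i$ is a product of actions of genuine central elements and of their inverse operators (the inverses coming from Lemma \ref{3.1}, not from $Z\otimes B$). What is guaranteed is only that $\underline{l_i}$ lies in the group $S$ generated by $L=\{\hbox{degrees in which some element of }Z\otimes B\hbox{ acts nonzero on }V\}$, and a generator of the group generated by $L$ need not lie in $L$: for instance $L=\{(1,1),(1,-1)\}$ generates $\mathbb{Z}(1,0)\oplus 2\mathbb{Z}(0,1)$ after a change of basis, and the generator $(0,2)$ is not a degree occurring in $L$. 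Your appeal to Lemma \ref{l4.3} is circular at exactly this point: that lemma upgrades ``acts nonzero on $V$'' to ``acts by a nonzero scalar on $\overline V$'', so invoking it presupposes that some $t^{\underline{l_i}}K_0\otimes b_i$ acts nonzero on $V$, which is precisely the unproved assertion.

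The periodicity $M_{\underline r}=M_{\underline r+\underline{l_i}}$ that you extract from this observation is nonetheless correct, and the proposition does need it; it just has to be reached differently. The paper sidesteps the issue by never placing the $z_i$ inside $U(\widetilde G)$: it adjoins them as operators and carries out the box reduction and Lemma \ref{l3.2} over $U(\widetilde G)\oplus\langle S\rangle$, only afterwards quoting irreducibility of the $M_{\underline s}$ as $\widetilde G$-modules. Alternatively, and closer in spirit to your own part 2, you can get your periodicity honestly from Remark \ref{r4.2}: $\psi(V)$ is an irreducible $\widetilde G$-submodule containing $\overline v\otimes 1$, hence $\psi(V)=M_{\underline 0}$; since $z_i$ descends to the nonzero scalar $l_i'$ on $\overline V$, for $v\in V_\lambda$ one has $\psi(z_iv)=l_i'\,\overline v\otimes t^{\underline{l_i}}\in M_{\underline 0}$, so $\overline v\otimes t^{\underline{l_i}}\in M_{\underline 0}$; reading off the degree-$\underline{l_i}$ component gives $X\in U(G)_{\underline{l_i}}$ with $X\overline v=\overline v$, and then $X(\overline v\otimes t^{\underline r})=\overline v\otimes t^{\underline r+\underline{l_i}}$ yields $M_{\underline r+\underline{l_i}}\subseteq M_{\underline r}$ for every $\underline r$, with equality by irreducibility --- after which your iteration argument, and the rest of your proof, goes through unchanged.
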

\begin{proof}
1. Let $w \otimes {t^{\underline s}}\in \overline V \otimes A_n $, since $\overline V$ is irreducible there exist $X \in U(G)$ such that $X.\overline v= w $. Let $X=\sum {X_{\underline r}}$, $X_{\underline r} \in U( G)_{\underline r} $. Then $$\sum {X_{\underline r}.(\overline v \otimes{t^{{\underline s-\underline r}}}  )}=(\sum {X_{\underline r}}.\overline v )\otimes t^{\underline s} =  w \otimes {t^{\underline s}}, $$ hence $\overline V \otimes A_n=\displaystyle {\sum_{\underline s \in \mathbb{Z}^n} (U(\widetilde G)\overline v \otimes {t^{\underline s}})}$. \\
Since $z_{\underline s}(U(\widetilde G)\overline v \otimes {t^{\underline r}})= U(\widetilde G)\overline v \otimes {t^{\underline r + \underline s}}$ for all  $\underline s \in \Gamma$, we have $\overline V \otimes A_n=\displaystyle {\sum_{0 \leq \underline s_i < l_i} (U(\widetilde G)\overline v \otimes {t^{\underline s_i}})}$ as $U(\widetilde G)\oplus S_0$ module.
Now using Lemma \ref{ln5.4}, we have $\overline V \otimes A_n = \displaystyle {\bigoplus _{\underline s \in F} U(\widetilde G))\overline v \otimes {t^{\underline s}}}$  as $U(\widetilde G)\oplus S_0$ module, where $F\subseteq \{(s_1,s_2,..,s_n) : 0\leq s_i <l_i \} $. But all $U(\widetilde G))\overline v \otimes {t^{\underline s_i}}$ are irreducible $\widetilde G$ modules by Lemma \ref{ln5.2}. Hence the result.\\  

2. Proceeding as in the proof of Lemma \ref{ln5.2}, $\psi(V)$ is irreducible sub-module of $\overline V \otimes A_n$ containing $\overline v \otimes 1.$ Again,  $\overline V \otimes A_n$ is direct sum of finitely many irreducible $\widetilde G$ modules. So $\psi(V)=(U(\widetilde G)\overline v \otimes 1)$, by Lemma \ref{ln5.4}. Therefore $V\simeq (U(\widetilde G)\overline v \otimes 1), $ as $\psi$ is injective.
\end{proof}
\begin{theorem}\label{t4.1}
Let $V$ be an irreducible integrable representation of  $\tau(B)$ with finite dimensional weight spaces with respect to $H$ such that some of $K_i$'s acts non-trivially on $V$. Then upto an automorphism of $\tau(B)$,\\ $(i)$ $V$ is isomorphic to an irreducible component of $(\overline V \otimes A_n, \rho(\underline \alpha))$ for some fixed $\underline \alpha \in \mathbb{C}^n$.\\ 
$(ii)$ $\overline V \simeq \displaystyle{\bigotimes_{i=1}^{k}V(\lambda_i)} $ for some $k \in \mathbb N,$ dominant integral weights $\lambda_i$, and irreducible modules $V(\lambda_i)$ for affine lie algebras, for $1\leq i \leq k.$ 
\end{theorem}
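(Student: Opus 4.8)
The plan is to assemble the reduction carried out in Section~3 with the reconstruction machinery of Section~4, since this theorem essentially packages those results into a single statement. First I would invoke the automorphism normalization: as some $K_i$ acts non-trivially, Theorem~\ref{t3.1} together with the matrix $P$ twist lets me assume that $K_0\otimes 1$ acts as a positive integer $c_0$ while $K_i\otimes 1$ acts trivially for $1\le i\le n$, and Theorem~\ref{t3.3} gives that the rank of $S$ equals $n$, producing the $n$ invertible central operators $z_i$ of degree $\underline{l_i}$. Then Theorem~\ref{t3.4} hands me the irreducible integrable highest weight module $\overline V=V/W$ for the current Kac--Moody algebra $G=\mathfrak{g}_{af}'\otimes A_n\otimes B\oplus\mathbb{C}d_0$, together with its factorization $\overline V\simeq\bigotimes_{i=1}^k V(\lambda_i)$ as a module for $\bigoplus_{i=1}^k\mathfrak{g}_{af}'\oplus\mathbb{C}d_0$, where each $V(\lambda_i)$ is an irreducible module for the derived affine algebra with dominant integral $\lambda_i$. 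This already delivers the structural description of $\overline V$ asserted in the theorem.

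The second step is to recover $V$ itself inside $\overline V\otimes A_n$. After the central reductions every central element except $K_0\otimes t^{\underline r}\otimes b$ (with $r_0=0$) acts trivially, so the $\tau(B)$-action on $V$ factors through the algebra $\widetilde G=G\oplus D$, $D=\langle d_1,\dots,d_n\rangle$; this identification is what makes $V$ a $\widetilde G$-module and allows comparison with $\overline V\otimes A_n$. I would then fix $\underline\alpha=(\lambda(d_1),\dots,\lambda(d_n))$ and use the map $\psi$ of Remark~\ref{r4.2}: it is a nonzero $\widetilde G$-module homomorphism $V\to(\overline V\otimes A_n,\rho(\underline\alpha))$, hence injective by irreducibility of $V$, so $\psi(V)$ is an irreducible $\widetilde G$-submodule of $\overline V\otimes A_n$.

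Finally I would pin down which component $\psi(V)$ is. By Lemma~\ref{l4.2} every irreducible submodule of $\overline V\otimes A_n$ is of the form $U(\widetilde G)\overline v\otimes t^{\underline s}$, and by the preceding Proposition the invertible central operators $z_{\underline s}$ together with the complete reducibility of Lemma~\ref{l3.2} furnish the finite direct-sum decomposition $\overline V\otimes A_n=\bigoplus_{\underline s\in F}U(\widetilde G)\overline v\otimes t^{\underline s}$. Since $\psi(v)=\overline v\otimes 1$ for $v\in V_\lambda$, the component containing $\psi(V)$ is forced to be $U(\widetilde G)\overline v\otimes 1$, and Remark~\ref{r4.1} supplies the grade-shift isomorphism identifying $V$ with this component. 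Collecting these identifications yields the statement.

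The main obstacle I anticipate is the bookkeeping around the two realizations of the same module: making precise that the $\tau(B)$-action, after the automorphism twist and the central reductions, genuinely coincides with the $\widetilde G$-action on $\overline V\otimes A_n$, including the correct grade shift coming from $\underline\alpha$ and the degrees $\underline{l_i}$ of the central operators. Checking that $\psi$ respects the full $\widetilde G$-bracket, and not merely the $G$-action where the subtlety involving $D$ enters, and that the resulting component is independent of the choices, is where the care is required; the complete-reducibility and irreducibility inputs are precisely what remove the ambiguity.
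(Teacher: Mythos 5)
Your proposal is correct and follows essentially the same route as the paper: Theorem \ref{t3.1}, Theorem \ref{t3.3}, and Theorem \ref{t3.4} supply the normalization and the structure of $\overline V$, and the paper's Theorem \ref{t4.1} is then just the assembly of Remark \ref{r4.2}, Lemma \ref{l4.2}, Remark \ref{r4.1}, and the preceding Proposition (the finite decomposition $\overline V \otimes A_n = \bigoplus_{\underline s \in F} U(\widetilde G)\overline v \otimes t^{\underline s}$ and the identification $\psi(V) \simeq U(\widetilde G)\overline v \otimes 1$ up to grade shift), exactly as you describe.
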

\begin{proof}
Follows from Proposition \ref{p4.1} and Theorem \ref{t3.4}.
\end{proof}
\section{Classification of modules when the center of toroidal Lie algebra acts trivially}
In this section we will classify irreducible integrable modules for $\tau(B)$ when $K_i,$ for $0 \leq i \leq n$ acts trivially on modules .
\begin{proposition}\label{p5.1}
Let $V$ be an irreducible integrable module for $\tau(B)$ with finite dimensional weight spaces with respect to $H$. Suppose all $K_i$ $( 0\leq i \leq n )$ acting trivially on $V$. Then there exists weight vectors $v_0 , w_0$ such that $$\dot{\mathfrak{g}}^{+}\otimes A \otimes B. v_0=0 $$ and 
  $$\dot{\mathfrak{g}}^{-} \otimes A \otimes B. w_0=0 .$$
  In particular, if $v_0 \in V_\lambda$, then $\lambda(\alpha^\vee)\geq 0$ for all $\alpha\in \dot \Delta^+$.
\end{proposition}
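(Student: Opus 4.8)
The plan is to reduce the statement to the existence of a weight whose \emph{horizontal part} is extremal, and then to produce such an extremal weight. Throughout, since every $K_i$ acts as $0$, each $\lambda\in P(V)$ is determined by its restriction $\overline\lambda=\lambda|_{\dot{\mathfrak h}}$ together with the scalars $\lambda(d_0),\dots,\lambda(d_n)$; write $\overline{P}(V)=\{\overline\lambda:\lambda\in P(V)\}\subseteq\dot{\mathfrak h}^*$ for the horizontal support of $V$.

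First I would record the reduction. Suppose $\lambda\in P(V)$ is such that $\overline\lambda$ is maximal in $\overline P(V)$ for the order $\leq$ on $\dot{\mathfrak h}^*$. Any operator $X_{\alpha}\otimes t_0^{m_0}t^{\underline m}\otimes b$ with $\alpha\in\dot\Delta^+$ sends $V_\lambda$ into $V_{\lambda+\alpha+m_0\delta_0+\delta_{\underline m}}$, and the restriction of the target weight to $\dot{\mathfrak h}$ is $\overline\lambda+\alpha>\overline\lambda$; by maximality $\overline\lambda+\alpha\notin\overline P(V)$, so this target weight space vanishes for every $(m_0,\underline m)\in\mathbb Z^{n+1}$ and every $b\in B$. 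Hence $\dot{\mathfrak g}^+\otimes A\otimes B$ annihilates any $0\neq v_0\in V_\lambda$. Symmetrically, if $\overline\mu$ is \emph{minimal} in $\overline P(V)$ then any $0\neq w_0\in V_\mu$ is annihilated by $\dot{\mathfrak g}^-\otimes A\otimes B$. Thus it suffices to show that $\overline P(V)$ has a maximal and a minimal element.

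The local input comes next. Fix $\underline g=(g_0,\dots,g_n)$ and set $V_{\underline g}=\{v\in V:d_iv=g_iv,\ 0\leq i\leq n\}$. Because $\dot{\mathfrak g}=\dot{\mathfrak g}\otimes 1\otimes 1$ commutes with all $d_i$, the space $V_{\underline g}$ is a $\dot{\mathfrak g}$-submodule, and it is integrable with finite-dimensional $\dot{\mathfrak h}$-weight spaces, each of which is an $H$-weight space of $V$. Exactly as in the proof of Lemma \ref{l3.3}, $V_{\underline g}$ decomposes into finite-dimensional irreducible $\dot{\mathfrak g}$-modules, each of which contains one of the finitely many miniscule weights; finite-dimensionality of the weight spaces then forces only finitely many summands, so $V_{\underline g}$ is finite-dimensional and in particular has a maximal and a minimal horizontal weight.

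The main obstacle is to promote this \emph{per-slice} finiteness to the global statement that $\overline P(V)$ is bounded; the per-slice bound depends a priori on $\underline g$, and a maximal element of $\overline P(V)$ is guaranteed only once a uniform horizontal bound is available. Here the hypothesis that the centre acts trivially is essential: for each $1\leq i\leq n$ the subalgebra $\dot{\mathfrak g}\otimes\mathbb C[t_i^{\pm1}]$ acts at level zero, and I would exploit the translation subgroup of the corresponding affine Weyl group $\Omega_i\subseteq\Omega$ together with the multiplicity invariance $\dim V_\lambda=\dim V_{w\lambda}$ of Lemma \ref{l2.3} to control the $\delta_i$-spread of the support lying over a fixed horizontal weight. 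Combining this with the finiteness of the weight spaces and the irreducibility of $V$—which forces $V$ to be generated from a single slice and thereby limits how the Cartan currents $\dot{\mathfrak h}\otimes A\otimes B$ can push the horizontal support upward—should show that $\overline P(V)$ is in fact finite. This boundedness is the genuine difficulty of the argument; granting it, $\overline P(V)$ possesses maximal and minimal elements, and the reduction of the second paragraph then produces the desired $v_0$ and $w_0$.
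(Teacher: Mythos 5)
Your reduction is sound: if $\overline\lambda$ is maximal (resp. minimal) in $\overline P(V)$ for the order $\leq$ on $\dot{\mathfrak h}^*$, then any nonzero vector of a weight lying over $\overline\lambda$ is killed by $\dot{\mathfrak g}^+\otimes A\otimes B$ (resp. $\dot{\mathfrak g}^-\otimes A\otimes B$), and your slice argument (finite-dimensionality of each $V_{\underline g}$ via miniscule weights, as in Lemma \ref{l3.3}) is also correct. But the proof collapses exactly where you flag it: the finiteness of $\overline P(V)$ is never proved, only "granted", and the tools you propose for it cannot deliver it. At level zero the translations $t_{i,h}\in\Omega_i$ act by $\lambda\mapsto\lambda-\lambda(h)\delta_i$; they \emph{fix} the horizontal part and only shift $\delta_i$-components, so multiplicity invariance under them constrains the $\delta$-spread over a fixed $\overline\lambda$ but says nothing about how far $\overline P(V)$ extends inside $\dot{\mathfrak h}^*$. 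Likewise the Cartan currents $\dot{\mathfrak h}\otimes A\otimes B$ have horizontal degree zero, so they never move the horizontal support at all; the operators that raise horizontal weights are the root vectors $\dot{\mathfrak g}_\alpha\otimes A\otimes B$, and nothing in your sketch constrains them. So the central step is a genuine gap, not a routine verification.

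The missing idea — which is what the paper's cited source (Proposition 2.12 of \cite{1}) supplies, and which your own setup is one step away from — is the level-zero degeneration of coroots. For a real root $\gamma=\alpha+m_0\delta_0+\delta_{\underline m}$ one has $\gamma^\vee=\alpha^\vee+\frac{2}{(\alpha,\alpha)}\sum_i m_iK_i$, so when every $K_i$ acts by zero, $\nu(\gamma^\vee)=\overline\nu(\alpha^\vee)$ for all $\nu\in P(V)$, \emph{independently of} $(m_0,\underline m)$. Consequently, if $\overline\nu(\alpha^\vee)>0$ for some $\alpha\in\dot\Delta^+$, then Lemma \ref{l2.3}(4) applies simultaneously to every real root $\alpha+\delta'$ with $\delta'$ isotropic, giving $\nu-\alpha-\delta'\in P(V)$ for \emph{every} $\delta'\in\bigoplus_i\mathbb{Z}\delta_i$; choosing $\delta'$ suitably, you can deposit the horizontal weight $\overline\nu-\alpha$ into any prescribed slice, in particular into one fixed slice $V_{\underline g_0}$. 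Now if $\overline P(V)$ were infinite, its $\dot\Omega$-stability (reflections $r_\alpha\in\Omega$ act at level zero by $\lambda\mapsto\lambda-\overline\lambda(\alpha^\vee)\alpha$) would give infinitely many distinct nonzero dominant weights $\overline\nu_j$, each with some simple root $\alpha_{i_j}$ satisfying $\overline\nu_j(\alpha_{i_j}^\vee)>0$; the weights $\overline\nu_j-\alpha_{i_j}$ are then infinitely many distinct horizontal weights (each value has at most $d$ preimages) all occurring in the single slice $V_{\underline g_0}$, contradicting the finite-dimensionality of that slice which you already established. This yields the finiteness of $\overline P(V)$, after which your maximal/minimal-element reduction finishes the proof.
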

\begin{proof}
See Proposition 2.12 of \cite{1} and Theorem 2.4(ii) of \cite{11}. The same proof will work. Last statement follows from Lemma \ref{l2.3}(4).
\end{proof}
\begin{lemma}\label{ln6.1}
Let $v_0$ be a weight vector of weight $\lambda$ such that $\dot{\mathfrak{g}}^{+}\otimes A \otimes B. v_0=0 $. Then either dim$V$=1 or $\lambda|_{\mathfrak{\dot h}}\neq0$.
\end{lemma}
\begin{proof}
Let $\lambda|_{\mathfrak{\dot h}}=0$, then proceeding similarly like proof of Lemma \ref{ln4.4}(1), we have $\mathfrak{\dot g}\otimes A\otimes B$ acts trivially on $v_0$. Now consider $W=\{v\in V:\mathfrak{\dot g}\otimes A\otimes B.v=0\},$ which  is a $\tau(B)$ sub-module of $V$. Hence by irreducibility $W=V$. Moreover, $Z\otimes B$ is generated by $\mathfrak{\dot g}\otimes A\otimes B$ as Lie algebra, hence $V$ is an irreducible module for the abelian Lie algebra $D$.
\end{proof}

\begin{proposition}\label{p5.2}
Let $V$ be a non-trivial module for $\tau(B)$ satisfying the conditions of Proposition  \ref{p5.1}. Then $Z \otimes B$ acts on $V$ trivially.
\end{proposition}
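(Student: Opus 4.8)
The plan is to reduce the statement to a property of a single vector and then rule out any nontrivial central action by a Heisenberg-algebra argument. Since $Z\otimes B$ is central and $V$ is irreducible, for any fixed nonzero vector $v_0$ we have $V=U(\tau(B))v_0$, so a central element $z$ satisfies $z\cdot V=U(\tau(B))\,z\cdot v_0$; hence it suffices to show that every homogeneous spanning element $t_0^{m_0}t^{\underline m}K_i\otimes b$ of $Z\otimes B$ annihilates the highest weight vector $v_0$ produced by Proposition \ref{p5.1} (equivalently, the lowest weight vector $w_0$). First I would dispose of the degree-zero elements $K_i\otimes b$: being central and weight-preserving they act as scalars (as in Lemma \ref{l2.2}), and I would show the scalar is zero by the Heisenberg argument of Proposition \ref{p3.3}. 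The point is that in the present section even $K_0\otimes 1$ acts trivially, so for every $0\le i\le n$ one may form the Heisenberg subalgebra spanned by $\{h\otimes t_i^{l}\otimes b,\ h'\otimes t_i^{-l}\otimes 1,\ K_i\otimes b\}$ with $[h\otimes t_i^{l}\otimes b,h'\otimes t_i^{-l}\otimes 1]=(h,h')l\,K_i\otimes b$; if $K_i\otimes b$ acted nontrivially, Proposition \ref{p3.2} would force the cyclic submodule $U(\widetilde H)v_0$ to be a sum of modules $\widetilde H^{\epsilon}(a)$, whose graded pieces have dimension $P(|i|)\to\infty$, contradicting the boundedness of the weight multiplicities of $V$.

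For the elements of nonzero degree $(m_0,\underline m)$ I would run a parallel argument in the imaginary direction $m_0\delta_0+\delta_{\underline m}$. Bracketing Cartan currents gives $[h\otimes t_0^{a_0}t^{\underline a}\otimes b_1,\ h'\otimes t_0^{c_0}t^{\underline c}\otimes b_2]=(h,h')\sum_{j} a_j\, t_0^{a_0+c_0}t^{\underline a+\underline c}K_j\otimes b_1b_2$, so by choosing the splitting of the degree appropriately one realizes $t_0^{m_0}t^{\underline m}K_i\otimes b$ (modulo the relation $\sum_j m_j t_0^{m_0}t^{\underline m}K_j=0$ defining $Z$) as the central element of a Heisenberg subalgebra built from $\dot{\mathfrak h}\otimes A\otimes B$. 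The same application of Proposition \ref{p3.2} then forces this central element to act as zero. Alternatively, one may invoke Lemma \ref{3.1}: a nonzero central operator of degree $(m_0,\underline m)$ is invertible, so $z^N v_0$ would be a highest weight vector of weight $\lambda_0+N(m_0\delta_0+\delta_{\underline m})$ for every $N\in\mathbb{Z}$; together with the lowest weight vector $w_0$ this yields an infinite string of extremal vectors with the same $\dot{\mathfrak h}$-projection, again incompatible with bounded weight multiplicities.

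The hard part will be securing the uniform bound on the weight multiplicities of $V$ that feeds both arguments, since by itself finite-dimensionality of the weight spaces is not enough: the graded dimensions $P(|i|)$ of $\widetilde H^{\epsilon}(a)$ grow but remain finite, so a contradiction arises only once the multiplicities are known to be uniformly bounded. In the directions $\delta_1,\dots,\delta_n$ one can argue as in Theorem \ref{t3.2}, using the affine Weyl groups $\Omega_i$ of the loop algebras $G_i$ and the Weyl-invariance of multiplicities (Lemma \ref{l2.3}). The genuinely delicate direction is $\delta_0$: because $K_0$ now acts as zero, the affine Weyl translations degenerate at level zero and cannot by themselves confine the $\delta_0$-strings. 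Here I would use both extremal vectors from Proposition \ref{p5.1} — the highest weight vector $v_0$ killed by $\dot{\mathfrak g}^+\otimes A\otimes B$ and the lowest weight vector $w_0$ killed by $\dot{\mathfrak g}^-\otimes A\otimes B$ — to trap the weights between two walls and deduce the required bound, following the toroidal computation of \cite{1,9}.
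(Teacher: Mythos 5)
Your overall route is the paper's: realize a putatively nontrivial element of $Z\otimes B$ as the center of a Heisenberg subalgebra of $\dot{\mathfrak{h}}\otimes A\otimes B$, invoke Proposition \ref{p3.2}, and contradict a uniform bound on weight multiplicities along the grading direction. The reduction to killing $v_0$ and the degree-zero argument in the directions $\delta_1,\dots,\delta_n$ are fine. But the two places where you depart from the paper are exactly where the gaps are. The step you yourself call the crux --- boundedness along $\delta_0$ --- rests on a false premise: affine Weyl translations do \emph{not} degenerate at level zero. Here every weight $\mu$ of $V$ satisfies $\mu(K_j)=0$ for all $j$ (this is the standing hypothesis of Section 5), and for $\gamma=\alpha+\delta_0$, $\gamma^\vee=\alpha^\vee+\tfrac{2}{(\alpha,\alpha)}K_0$, a direct computation gives $r_\gamma r_\alpha(\mu)=\mu+\mu(\alpha^\vee)\delta_0$; that is, precisely at level zero the translations act by $\mu\mapsto\mu-\mu(h)\delta_0$, which is the formula of Lemma \ref{l3.5} verbatim in the $\delta_0$-direction. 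So the argument of Corollary \ref{c3.1} together with Lemma \ref{l2.3}(2), applied to the dominant weight $\lambda$ of the vector $v_0$ from Proposition \ref{p5.1}, bounds the $\delta_0$-strings exactly as it bounds the others --- and this is what the paper's proof does. Your proposed substitute, trapping weights between $v_0$ and $w_0$, cannot deliver the bound: the two extremal vectors only confine the $\dot{\mathfrak{h}}$-components of weights to a finite set, while all weights $\lambda+k\delta_0$ in a $\delta_0$-string have the \emph{same} $\dot{\mathfrak{h}}$-component, so nothing is said about their multiplicities.

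The second gap is in the nonzero-degree case. Proposition \ref{p3.2} concerns $\mathbb{Z}$-graded $\widetilde{H}$-modules, so the central element must act as a degree-zero operator (a scalar on each irreducible constituent), whereas your candidate $t_0^{m_0}t^{\underline m}K_i\otimes b$ shifts every weight by $m_0\delta_0+\delta_{\underline m}$; as stated, your ``parallel argument in the imaginary direction'' is not well posed. The paper removes this obstruction with Theorem \ref{t3.1}, which you never invoke: a nontrivially acting $t_0^{r_0}t^{\underline r}K_i\otimes b$ necessarily has $i\leq n-k$ and $r_j=0$ for $0\leq j\leq n-k$, and one passes to the quotient $\overline V=V/W$ of Theorem \ref{t3.1}(5), whose weight spaces are finite dimensional for the smaller Cartan containing only $d_0,\dots,d_{n-k}$; there the offending element becomes a degree-zero scalar, and $M=U(\widetilde{H})\overline{v_\lambda}$ is honestly $\mathbb{Z}$-graded in the $\delta_i$-direction with $i\le n-k$, so the Heisenberg contradiction goes through. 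Finally, your fallback via Lemma \ref{3.1} is not a proof: an invertible central operator of nonzero degree produces highest weight vectors $z^Nv_0$ lying in pairwise \emph{distinct} weight spaces, which is perfectly compatible with finite-dimensional weight spaces (the modules of Section 4 carry exactly such invertible central operators $z_{\underline s}$, of nonzero degree), and since $U(\tau(B))z^Nv_0=V$ for every $N$ these vectors do not generate independent submodules; no contradiction follows from their existence alone --- one really does need the Heisenberg structure plus the uniform bound.
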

\begin{proof}
Let $t_0^{r_0}t^{\underline r}K_i\otimes b$ acting on $V$ non trivially for some $r_0 \in \mathbb Z$ and $\underline r \in \mathbb{Z}^n$. Then by Theorem \ref{t3.1}(3,4), $i \leq n-k$ and $r_j=0 $ for $0 \leq j \leq n-k$.
\\  Fix  some $i \leq n-k$ , $(r_0, \underline r) \in \mathbb{Z}^{n+1}$, $ b\in B$  and $h,h' \in \dot{ \mathfrak{h}} $ such that $(h,h') \neq 0.$
Let us construct the Heisenberg algebra $\widetilde H = $ span $\{ h\otimes t_0^{r_0}t^{\underline r}t_i^k \otimes b, h'\otimes t_i^{-k} , t_0^{r_0}t^{\underline r}K_i \otimes b ,$ $ k \in \mathbb{Z}\} $  with a Lie bracket defined by 
 $$ \left[h\otimes t_0^{r_0}t^{\underline r}t_i^k \otimes b, h'\otimes t_i^{-l} \right]= k(h,h')t_0^{r_0}t^{\underline r}K_i \otimes b \delta_{kl}.$$
By Proposition \ref{p5.1}, there exists a weight vector $v_\lambda$ of weight $\lambda$ such that   $\dot{\mathfrak{g}}^{+}\otimes A \otimes B. v_{\lambda}=0 .$ 
 Now by Theorem \ref{t3.1}, there is a proper submodule $W$ of $V$ such that $v_\lambda \not \in W $. Consider $M=U(\widetilde H)\overline{v_\lambda}$ , $\overline{v_\lambda} \in \overline V= V/W$. Then $M$ is a $\mathbb Z$ graded $\widetilde H$ module, in fact $M=\displaystyle{\bigoplus_{k\in \mathbb{Z}}}M\cap {\overline V}_{\lambda +r_0\delta_0 + \delta_{\underline r} +k\delta_i} $ , hence each component of $M$ is finite dimensional, since weight spaces of $\overline V $ are so.\\
 Now since $\lambda$ is dominant and by Lemma \ref{ln6.1}, $\lambda|_{\mathfrak{\dot h}}\neq0$, so by Lemma \ref{ln4.4} there exists a $\omega \in \Omega$ such that $\omega(\lambda + k \delta_i)=\lambda + \overline k \delta_i$, where $0 \leq \overline k < R$ for some $R \in \mathbb{N}$. By Lemma \ref{l2.3}(2) $\{dim {\overline V}_ {\lambda +r_0\delta_0 + \delta_{\underline r} +k\delta_i} : k \in \mathbb{Z}\}$  is a finite set and hence dimensions of components of $M$ are uniformly bounded, which is not possible by Proposition \ref{p3.2}.
\end{proof}
 
We fix some notations for the rest of this section. Let $\tau_1 =\dot{ \mathfrak{g}}\otimes A \otimes B$,  $\widetilde \tau_1 = \tau_1 \oplus D $, ${\mathfrak{h}_1 }=\dot{ \mathfrak{h}} \otimes A \otimes B  $ and $\widetilde {\mathfrak{h}}_1 = \mathfrak{h}_1 \oplus D$, then $U(\tau_1)$ and $U({\mathfrak{h}_1 })$ are both $\mathbb{Z}^{n+1}$ graded algebras.
Let $\overline \psi : U( {\mathfrak{h}_1})\mapsto A  $ be a $\mathbb{Z}^{n+1}$ graded algebra homomorphism and $A_{\overline \psi} =$ Image of $\overline \psi$.
We make $A_{\overline \psi}$ into a $ {\widetilde{\mathfrak h}_1 }$ module by defining 
\begin{align}\label{a5.1}
 h\otimes t^{\underline r} \otimes b.t^{\underline m}=\overline \psi ( h\otimes t^{\underline r} \otimes b) t^{\underline m}
 \end{align}
 \begin{align}\label{a5.2}
  d_i.t^{\underline m} =(m_i+\alpha_i)t^{\underline m},
\end{align}           
    
 for arbitrary $\underline \alpha =(\alpha_1,\alpha_2,...,\alpha_{n+1}) \in \mathbb C^{n+1}$ and for all $h \in \dot{ \mathfrak{h}}$, $b \in B $, $\underline r , \underline m \in \mathbb{Z}^{n+1}.$ But we fix $\underline \alpha$ depending on situation.
\begin{lemma}\label{l5.1}$($Lemma 1.2 , \cite{8}$)$
$A_{\overline \psi}$ is an irreducible $\widetilde{\mathfrak h}_1$ module iff every homogeneous elements of $A_{\overline \psi}$ is invertible in $A_{\overline \psi}$.
\end{lemma}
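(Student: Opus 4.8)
The plan is to reduce the representation-theoretic statement to a combinatorial one about the support monoid of $A_{\overline\psi}$ inside the Laurent polynomial ring $A = A_{n+1}$. Since $\overline\psi$ is a unital $\mathbb{Z}^{n+1}$-graded algebra homomorphism, its image $A_{\overline\psi}$ is a unital $\mathbb{Z}^{n+1}$-graded subalgebra of $A$. Each homogeneous component $A_{\underline m} = \mathbb{C}\,t^{\underline m}$ of $A$ is one-dimensional, so each graded piece of $A_{\overline\psi}$ is either $0$ or $\mathbb{C}\,t^{\underline m}$. Hence there is a submonoid $J \subseteq \mathbb{Z}^{n+1}$ with $0 \in J$ such that $A_{\overline\psi} = \bigoplus_{\underline m \in J}\mathbb{C}\,t^{\underline m}$. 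The nonzero homogeneous elements are exactly the scalar multiples of the $t^{\underline m}$ with $\underline m \in J$, and $t^{\underline m}$ is invertible in $A_{\overline\psi}$ if and only if $t^{-\underline m} \in A_{\overline\psi}$, i.e. $-\underline m \in J$. Thus the right-hand condition ``every homogeneous element of $A_{\overline\psi}$ is invertible'' is equivalent to ``$J$ is a subgroup of $\mathbb{Z}^{n+1}$''.

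Next I would record the module structure. The derivations $d_0,\dots,d_n$ act on $t^{\underline m}$ by the grading, $d_i.t^{\underline m} = m_i\,t^{\underline m}$, so the $\widetilde{\mathfrak h}_1$-weight of $t^{\underline m}$ is $\underline m$ itself; distinct degrees give distinct weights, whence every $\widetilde{\mathfrak h}_1$-submodule is graded and decomposes as a sum of the lines $\mathbb{C}\,t^{\underline m}$ it contains. The element $h\otimes t^{\underline r}\otimes b$ acts by multiplication by $\overline\psi(h\otimes t^{\underline r}\otimes b)$, a homogeneous element of degree $\underline r$ in $A_{\overline\psi}$, and since $\overline\psi\bigl(U(\mathfrak h_1)\bigr) = A_{\overline\psi}$, the cyclic submodule generated by a homogeneous vector is $U(\widetilde{\mathfrak h}_1)\,t^{\underline m} = A_{\overline\psi}\cdot t^{\underline m}$, which is the graded subspace supported on $J + \underline m$.

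With these identifications both implications are short. If every homogeneous element is invertible, then $J$ is a group, so $J + \underline m = J$ for every $\underline m \in J$; any nonzero submodule is graded, hence contains some $t^{\underline m}$, and therefore contains $A_{\overline\psi}\cdot t^{\underline m} = A_{\overline\psi}$, proving irreducibility. Conversely, if $A_{\overline\psi}$ is irreducible, then for each $\underline m \in J$ the cyclic submodule $A_{\overline\psi}\cdot t^{\underline m}$ must equal $A_{\overline\psi}$, so $J + \underline m = J$; taking $\underline 0 \in J$ on the left gives $-\underline m \in J$, i.e. $t^{\underline m}$ is invertible, for every homogeneous element. I expect the only genuine point requiring care, and the main obstacle, to be the two structural observations that underlie this translation: that $A_{\overline\psi}$ is graded with one-dimensional homogeneous components (so $J$ is well defined) and that every submodule is a sum of weight lines (so that testing irreducibility reduces to the cyclic submodules $A_{\overline\psi}\cdot t^{\underline m}$ and hence to the monoid identity $J + \underline m = J$). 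Once these are in place the equivalence ``$J$ is a subgroup $\iff$ irreducible'' is immediate, which is the content of Lemma~1.2 of \cite{8}.
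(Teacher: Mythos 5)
Your proposal is correct, and it is worth noting that the paper itself gives no argument for this statement at all: Lemma \ref{l5.1} is quoted verbatim as Lemma 1.2 of \cite{8}, so your write-up actually supplies the proof that the paper outsources. The reduction you make --- $A_{\overline\psi}$ is a $\mathbb{Z}^{n+1}$-graded subalgebra of $A$ with one-dimensional or zero homogeneous components, hence determined by a submonoid $J\subseteq\mathbb{Z}^{n+1}$; submodules are graded because the degree operators $d_i$ separate the components; cyclic submodules of homogeneous vectors are $A_{\overline\psi}\cdot t^{\underline m}$, supported on $J+\underline m$; and irreducibility is then exactly the statement that $J$ is a group --- is the standard argument and is essentially what appears in \cite{8}. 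One small point of terminology: $\widetilde{\mathfrak h}_1=\mathfrak h_1\oplus D$ is not abelian (the $d_i$ act on $\mathfrak h_1$ by nonzero brackets), so ``the $\widetilde{\mathfrak h}_1$-weight of $t^{\underline m}$'' should be read as the weight for the abelian subalgebra $D$; since that is precisely how you use it (distinct degrees give distinct $D$-eigenvalue tuples, forcing submodules to be graded, and $U(\widetilde{\mathfrak h}_1)t^{\underline m}=U(\mathfrak h_1)U(D)t^{\underline m}=A_{\overline\psi}t^{\underline m}$ by PBW), this is a cosmetic issue, not a gap.
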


Let $\overline \psi $ be as above such that $A_{\overline \psi}$ is irreducible and let $\dot{\mathfrak g}^+\otimes A \otimes B$ acting on $A_{\overline \psi}$ trivially. Now construct the induced $\widetilde \tau_1$ module $M(\overline \psi)=U(\widetilde \tau_1) {\displaystyle{\otimes_{U(\dot{\mathfrak g}^+\otimes A \otimes B\oplus \widetilde{\mathfrak h}_1 ) }} } A_{\overline \psi}$. By standard arguments we have
\begin{proposition}\label{p5.3}
1. As a $\widetilde {\mathfrak h}_1$ module $M(\overline \psi)$ is a weight module.\\

2.$M(\overline \psi)$ is a free  $\dot{\mathfrak g}^-\otimes A \otimes B$ module and as a vector space $M(\overline \psi) \simeq U(\dot{\mathfrak g}^-\otimes A \otimes B)\otimes \mathbb{C}$.\\

3.$M(\overline \psi)$ has a unique irreducible quotient, say $V(\overline \psi )$.
\end{proposition}
A module $V$ for $\widetilde \tau_1$ is said to be a graded highest weight module if there exists a weight vector $v$ such that\\
 (1) $U(\widetilde \tau_1).v=V.$\\
(2)  $\dot{\mathfrak g}^+\otimes A \otimes B.v=0$.\\
(3) $U(\widetilde {\mathfrak h}_1)v$ is an irreducible-$\widetilde{\mathfrak h}_1$ module isomorphic to $ A_{\overline \psi}$ for some $\mathbb{Z}^{n+1}$- graded homomorphism $\overline \psi$.\\

A module $W$ is said to be a non graded highest weight module for $\tau_1$ if there exists a weight vector $w$ such that \\
 (1) $U(\tau_1).w=W.$ \\
 (2)  $\dot{\mathfrak g}^+\otimes A \otimes B.w=0.$\\
  (3) there exists $\psi \in \mathfrak{h}_1^*$ such that $h.w=\psi(h)w$ for all $h \in \mathfrak{h}_1^*$. \\

Let $\psi \in \mathfrak{h}_1^* $ and $\mathbb{C}\psi$ be a one dimensional vector space. Let  $\dot{\mathfrak g}^+\otimes A \otimes B$ acting on  $\mathbb{C}\psi$ trivially and $\mathfrak{h}_1$ acting by $\psi$. Now construct the induced module for $\tau_1$, $M( \psi)=U( \tau_1) {\displaystyle{\otimes_{U(\dot{\mathfrak g}^+\otimes A \otimes B\oplus {\mathfrak h}_1 ) }} } {\mathbb{C} \psi}$. By standard arguments we have 
\begin{proposition}
1. As a $ {\mathfrak h}_1$-module, $M( \psi)$ is a weight module.\\

2.$M( \psi)$ is a free  $\dot{\mathfrak g}^-\otimes A \otimes B$ module and as a vector space $M(\psi) \simeq U(\dot{\mathfrak g}^-\otimes A \otimes B)\otimes \mathbb{C}$.\\

3.$M( \psi)$ has a unique irreducible quotient, say $V( \psi )$.
\end{proposition}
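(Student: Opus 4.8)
The plan is to treat $M(\psi)$ as an ordinary Verma-type (induced) module attached to the triangular decomposition
$\tau_1 = (\dot{\mathfrak g}^- \otimes A \otimes B) \oplus \mathfrak{h}_1 \oplus (\dot{\mathfrak g}^+ \otimes A \otimes B)$, with Borel-type subalgebra $\mathfrak{b} = \mathfrak{h}_1 \oplus (\dot{\mathfrak g}^+ \otimes A \otimes B)$, so that $M(\psi) = U(\tau_1)\otimes_{U(\mathfrak b)} \mathbb{C}\psi$ is generated by the cyclic vector $v_\psi := 1 \otimes \psi$. All three assertions then follow formally from the Poincar\'e--Birkhoff--Witt theorem together with the fact that $\dot{\mathfrak h}$ acts semisimply.

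First I would establish (2). Applying PBW to the vector-space splitting $\tau_1 = (\dot{\mathfrak g}^- \otimes A \otimes B) \oplus \mathfrak{b}$ factorizes the enveloping algebra as $U(\tau_1) \cong U(\dot{\mathfrak g}^- \otimes A \otimes B) \otimes U(\mathfrak b)$, free as a right $U(\mathfrak b)$-module. Tensoring over $U(\mathfrak b)$ with $\mathbb{C}\psi$ collapses the right factor and yields the vector-space isomorphism $M(\psi) \cong U(\dot{\mathfrak g}^- \otimes A \otimes B)\otimes \mathbb{C}$, $u\otimes\psi \mapsto u\, v_\psi$, exhibiting $M(\psi)$ as a free $U(\dot{\mathfrak g}^- \otimes A \otimes B)$-module of rank one on $v_\psi$. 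This proves (2) and simultaneously provides a PBW basis of $M(\psi)$ given by the monomials $(X_{-\alpha_1}\otimes a_1 \otimes b_1)\cdots(X_{-\alpha_k}\otimes a_k\otimes b_k)\, v_\psi$ with $\alpha_j \in \dot{\Delta}^+$.

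For (1) I would read off the weights from this basis. Since $[h, X_{-\alpha}\otimes a \otimes b] = -\alpha(h)\, X_{-\alpha}\otimes a \otimes b$ for $h \in \dot{\mathfrak h} = \dot{\mathfrak h}\otimes 1 \otimes 1$, each such monomial is a joint $\dot{\mathfrak h}$-eigenvector of weight $\overline{\psi} - \sum_j \alpha_j$, where $\overline{\psi} = \psi|_{\dot{\mathfrak h}}$; hence $M(\psi) = \bigoplus_{\mu \le \overline{\psi}} M(\psi)_\mu$ is a weight module, the grading being the one induced by the finite-dimensional semisimple $\dot{\mathfrak h}$, while $v_\psi$ is the joint $\mathfrak{h}_1$-eigenvector $h\, v_\psi = \psi(h) v_\psi$. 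Because any nonempty monomial strictly lowers the $\dot{\mathfrak h}$-weight, the top weight space is one-dimensional, $M(\psi)_{\overline{\psi}} = \mathbb{C} v_\psi$.

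Finally, for (3) I would run the standard maximal-submodule argument. Given a submodule $N$, $\dot{\mathfrak h}$-stability plus semisimplicity (a Vandermonde/finite-support argument applied to $h \in \dot{\mathfrak h}$ separating the finitely many weights occurring in any fixed element) shows $N = \bigoplus_\mu (N \cap M(\psi)_\mu)$ is a weight submodule. If $N$ meets the top space it contains $v_\psi$ and hence equals $M(\psi)$, so every proper submodule satisfies $N \cap M(\psi)_{\overline{\psi}} = 0$. The sum $M'$ of all proper submodules is then again a weight submodule with $M' \cap M(\psi)_{\overline{\psi}} = 0$, so $v_\psi \notin M'$ and $M'$ is proper; being the largest such, it is the unique maximal proper submodule and $V(\psi) = M(\psi)/M'$ is the unique irreducible quotient. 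The one step deserving genuine care is precisely this reduction of an arbitrary submodule to a weight submodule, which legitimizes testing submodules against the one-dimensional top weight space; the remainder is purely formal.
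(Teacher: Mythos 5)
Your proposal is correct and is precisely the ``standard argument'' that the paper invokes without spelling out: the paper proves this proposition (like its graded counterpart, Proposition 5.3) only by the phrase ``By standard argument we have,'' meaning exactly the PBW factorization $U(\tau_1)\cong U(\dot{\mathfrak g}^-\otimes A\otimes B)\otimes U(\mathfrak b)$, the resulting $\dot{\mathfrak h}$-weight decomposition with one-dimensional top weight space $\mathbb{C}v_\psi$, and the sum-of-proper-submodules construction of the unique maximal submodule. Your write-up fills in these details correctly, including the one genuinely non-formal point (that every submodule is a weight submodule), so it matches the paper's intended proof.
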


Let $\overline \psi : U({\mathfrak h}_1) \mapsto A_{\overline \psi}$ be a $\mathbb{Z}^{n+1}$ graded homomorphism and let $E(1): A_{\overline \psi} \mapsto \mathbb{C}$ be a linear map such that $E(1)t^{\underline r}=1 $. Let $\psi =E(1) \overline{\psi} $ and $V(\psi)$ be the irreducible $\tau_1$ module. We will make $V(\psi)\otimes A$ into a $\widetilde \tau_1$ module by the action, 
\begin{align}
x \otimes t^{\underline r}\otimes b.(v\otimes t^{\underline s}) = (x\otimes t^{\underline r}\otimes b .v) \otimes{ t^{\underline r + \underline s}},\\
        d_i.(v \otimes t^{\underline s}) = s_i(v \otimes t^{\underline s}) ,
\end{align}
  for all $\underline r , \underline s \in \mathbb{Z}^{n+1}$, $b \in B$.

\begin{proposition}\label{p5.5}
Let $\overline \psi$,  $\psi$ be as above and $A_{\overline \psi}$ be an irreducible $\widetilde{\mathfrak h}_1$ module. Let $G \subset \mathbb{Z}^{n+1}$ and $\{ t^{\underline m} : \underline m \in G  \}$ denote the set of coset representative of $A/A_{\overline \psi}$. Let            $v\in V(\psi)$ be the highest weight vector. Then \\
1. $V(\psi)\otimes A = \displaystyle{\bigoplus_{\underline m \in G}  }  U(v\otimes t^{\underline m})$ as a $\widetilde \tau_1$ module, where $U(v\otimes t^{\underline m})$ is  $\widetilde \tau_1$ submodule generated by $v\otimes t^{\underline m}$.\\
2. Each $U(v\otimes t^{\underline m})$ is irreducible $\widetilde \tau_1$ module.\\
3. $V(\overline \psi) \simeq U(v\otimes 1)$ as $\widetilde \tau_1$ module.\\ 
4. $V(\overline \psi) $ has finite dimensional weight spaces as $\widetilde{\mathfrak h}_1$ module iff $V(\psi)$ has finite dimensional weight spaces as $\mathfrak{h}_1$ module .
\begin{proof}
Proof follows on the similar lines as in Proposition 1.8, 1.9 and Lemma 1.10 of \cite{9}.
\end{proof}
 
\end{proposition}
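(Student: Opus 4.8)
The plan is to run the standard comparison between the graded module $V(\overline\psi)$ and the non-graded module $V(\psi)$, resting on two tools. The first is the projection $\Phi\colon V(\psi)\otimes A\to V(\psi)$, $\Phi(w\otimes t^{\underline s})=w$, which is a $\tau_1$-module map (though not a $\widetilde\tau_1$-map), exactly as the map $\phi$ of Lemma \ref{l4.1}. The second comes from Lemma \ref{l5.1}: since $A_{\overline\psi}$ is an irreducible $\widetilde{\mathfrak h}_1$-module, every homogeneous element of $A_{\overline\psi}$ is invertible, so $A_{\overline\psi}=\bigoplus_{\underline r\in\Lambda}\mathbb{C}t^{\underline r}$ for a sublattice $\Lambda\subseteq\mathbb{Z}^{n+1}$, and for each $\underline r\in\Lambda$ one can choose $h\otimes t^{\underline r}\otimes b$ with $\overline\psi(h\otimes t^{\underline r}\otimes b)=c\,t^{\underline r}$, $c\neq0$. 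Because $\mathfrak h_1$ acts on the highest weight vector $v$ of $V(\psi)$ by the scalars $\psi=E(1)\overline\psi$, such an element sends $v\otimes t^{\underline s}$ to $c\,(v\otimes t^{\underline s+\underline r})$, i.e. it invertibly shifts the $A$-degree by $\underline r\in\Lambda$. I also record the degree-support claim: if $v\otimes t^{\underline m'}\in U(v\otimes t^{\underline m})$ then $\underline m'-\underline m\in\Lambda$. This I would prove by PBW: writing $X=\sum_{\underline q}X_{\underline q}\in U(\tau_1)$ in graded pieces, the component of $X_{\underline q}(v\otimes t^{\underline m})$ of top $\dot{\mathfrak h}$-weight $\lambda_0:=\psi|_{\dot{\mathfrak h}}$ comes only from the PBW-projection of $X_{\underline q}$ onto $U(\mathfrak h_1)_{\underline q}$ (factors from $\dot{\mathfrak g}^{\pm}$ strictly change the $\dot{\mathfrak h}$-weight), and $U(\mathfrak h_1)_{\underline q}$ acts on $v$ through $\overline\psi$, which is nonzero in degree $\underline q$ only when $\underline q\in\Lambda$.

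For part $1$, surjectivity is immediate from $\Phi$: given $w\otimes t^{\underline s}$, write $w=Xv$ with $X=\sum_{\underline q}X_{\underline q}$, so that $\sum_{\underline q}X_{\underline q}.(v\otimes t^{\underline s-\underline q})=(Xv)\otimes t^{\underline s}=w\otimes t^{\underline s}$, placing $w\otimes t^{\underline s}$ in $\sum_{\underline m}U(v\otimes t^{\underline m})$. The invertible shifts above give $U(v\otimes t^{\underline m})=U(v\otimes t^{\underline m+\underline r})$ for $\underline r\in\Lambda$, so only coset representatives $\underline m\in G$ are needed. Granting irreducibility (part $2$), $\sum_{\underline m\in G}U(v\otimes t^{\underline m})$ is a sum of irreducible submodules, hence by Lemma \ref{l3.2} it is the direct sum of a subfamily; the degree-support claim shows the $U(v\otimes t^{\underline m})$ for distinct cosets are distinct submodules, so the whole family $G$ survives and the decomposition is as stated.

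For part $2$, let $0\neq W\subseteq U(v\otimes t^{\underline m})$ be a submodule. Since the $D$-action makes everything $\mathbb{Z}^{n+1}$-graded, $W$ contains a homogeneous $w\otimes t^{\underline p}$ with $w\neq0$; then $\Phi(W)$ is a nonzero $\tau_1$-submodule of the irreducible $V(\psi)$, so $\Phi(W)=V(\psi)$ and there is $Y\in U(\tau_1)$ with $Yw=v$. Splitting $Y=\sum_{\underline q}Y_{\underline q}$, each $Y_{\underline q}.(w\otimes t^{\underline p})=(Y_{\underline q}w)\otimes t^{\underline p+\underline q}$ lies in $W$; since the top weight space $V(\psi)_{\lambda_0}=\mathbb{C}v$ is one dimensional and $\sum_{\underline q}Y_{\underline q}w=v$, some $Y_{\underline q_0}w$ has nonzero $\lambda_0$-component, and taking the $(\lambda_0,\underline p+\underline q_0)$-weight component of $Y_{\underline q_0}.(w\otimes t^{\underline p})\in W$ yields $v\otimes t^{\underline m'}\in W$ with $\underline m'=\underline p+\underline q_0$. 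By the degree-support claim $\underline m'\in\underline m+\Lambda$, so the invertible shifts give $U(v\otimes t^{\underline m'})=U(v\otimes t^{\underline m})$, whence $W=U(v\otimes t^{\underline m})$. For part $3$, I would define a $\widetilde\tau_1$-map $M(\overline\psi)\to V(\psi)\otimes A$ sending the generator $t^{\underline r}\in A_{\overline\psi}$ to $v\otimes t^{\underline r}$; well-definedness is the computation $\overline\psi(h\otimes t^{\underline r}\otimes b)=c\,t^{\underline r}$ together with $\psi(h\otimes t^{\underline r}\otimes b)=E(1)(c\,t^{\underline r})=c$, which makes the $A_{\overline\psi}$-action and the $V(\psi)\otimes A$-action agree, while $\dot{\mathfrak g}^+\otimes A\otimes B$ kills both sides and the $D$-actions match by construction. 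The image is $U(v\otimes1)$, which is irreducible by part $2$, so this surjection onto an irreducible module realizes the unique irreducible quotient of $M(\overline\psi)$; that is, $V(\overline\psi)\simeq U(v\otimes1)$.

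Finally, for part $4$, the forward direction is the inclusion $V(\overline\psi)_{(\mu,\underline p)}=U(v\otimes1)_{(\mu,\underline p)}\subseteq V(\psi)_{\mu}\otimes t^{\underline p}$, giving $\dim V(\overline\psi)_{(\mu,\underline p)}\le\dim V(\psi)_{\mu}$; for the reverse, part $1$ gives $V(\psi)_{\mu}\cong(V(\psi)\otimes A)_{(\mu,\underline p)}=\bigoplus_{\underline m\in G}U(v\otimes t^{\underline m})_{(\mu,\underline p)}$, each summand being a grade-shift of a weight space of $V(\overline\psi)$, with the invertible shifts of Lemma \ref{l5.1} making these dimensions constant along $\Lambda$-cosets so that finiteness of the $V(\overline\psi)$ weight spaces controls the sum. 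I expect the main obstacle to be precisely the structural part $1$: establishing that the $U(v\otimes t^{\underline m})$ are irreducible and independent across cosets, which hinges on the degree-support/weight-preservation claim distinguishing the cosets together with the invertibility of homogeneous elements of $A_{\overline\psi}$. Once these are in place, parts $3$ and $4$ are weight-space bookkeeping.
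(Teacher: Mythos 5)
Your reconstruction of parts 1--3 is essentially correct and follows the same lineup as the source the paper defers to (\cite{9}, Propositions 1.8, 1.9): the shift operators coming from invertibility of homogeneous elements of $A_{\overline\psi}$ (Lemma \ref{l5.1}), the degree-support claim proved by PBW together with the vanishing of $E(1)\overline\psi$ on $U(\mathfrak h_1)_{\underline q}$ for $\underline q\notin\Lambda$, and the extraction of a vector $v\otimes t^{\underline m'}$ inside any nonzero submodule are exactly the right ingredients, and your use of the unique irreducible quotient of $M(\overline\psi)$ in part 3 is correct. One repair is needed: for $\underline r$ in the support $\Lambda$ of $A_{\overline\psi}$ there need not exist a \emph{degree-one} element $h\otimes t^{\underline r}\otimes b$ with $\overline\psi(h\otimes t^{\underline r}\otimes b)\neq 0$; the set of degrees where $\overline\psi|_{\mathfrak h_1}$ is nonzero only generates $\Lambda$ (e.g.\ $\overline\psi|_{\mathfrak h_1}$ supported in degrees $\pm 2,\pm 3$ already gives $\Lambda=\mathbb{Z}$). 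You must instead take $u_{\underline r}\in U(\mathfrak h_1)_{\underline r}$ with $\overline\psi(u_{\underline r})=c\,t^{\underline r}$, $c\neq 0$; since $u_{\underline r}$ acts on $v\otimes t^{\underline s}$ by $c\,(v\otimes t^{\underline s+\underline r})$, every use you make of the shifts survives this substitution.

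The genuine gap is in the reverse direction of part 4, which is precisely the direction Theorem \ref{t5.1} later needs before invoking Proposition \ref{p5.6}. Your identity $\dim V(\psi)_{\mu}=\sum_{\underline m\in G}\dim U(v\otimes t^{\underline m})_{(\mu,\underline p)}$ is correct, and each summand is finite under the hypothesis, but $G$ indexes the cosets of $\Lambda$ in $\mathbb{Z}^{n+1}$ and can be infinite, so ``finiteness of the $V(\overline\psi)$ weight spaces controls the sum'' is exactly the point at issue: an infinite sum of nonzero finite-dimensional summands is infinite. Infinitely many cosets genuinely can contribute. Take $\dot{\mathfrak g}=sl_2$, $B=\mathbb{C}$, and $\overline\psi(h\otimes t^{\underline r})=\lambda\,\delta_{\underline r,\underline 0}$ with $\lambda\neq 0$, so that $A_{\overline\psi}=\mathbb{C}$ and $\Lambda=0$. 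Then $(e\otimes t^{\underline s})(f\otimes t^{\underline q})v=\lambda\,\delta_{\underline q+\underline s,\underline 0}\,v$ shows no $(f\otimes t^{\underline q})v$ lies in the maximal submodule, so \emph{every} degree $\underline q$ contributes to $V(\psi)_{\lambda_0-\alpha}$, each coset contributing a one-dimensional space: all weight spaces $V(\overline\psi)_{(\lambda_0-\alpha,\underline q)}$ are finite dimensional, yet $V(\psi)_{\lambda_0-\alpha}$ is infinite dimensional. Your weight-by-weight bookkeeping, applied at this weight, would conclude the opposite. The statement of part 4 survives in this example only because the hypothesis fails at a \emph{different} weight: a Shapovalov-type computation shows the vectors $(f\otimes t^{\underline a})(f\otimes t^{\underline q-\underline a})v$ are linearly independent, so $V(\overline\psi)_{(\lambda_0-2\alpha,\underline q)}$ is infinite dimensional. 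This is the real content of the hard direction (Lemma 1.10 of \cite{9}): one must prove the global implication that if infinitely many cosets contribute to some $V(\psi)_{\lambda_0-\eta}$, then some single weight space of $V(\overline\psi)$, at a deeper $\dot{\mathfrak h}$-weight such as $\lambda_0-2\eta$, is infinite dimensional. An argument that fixes one weight $\mu$ and sums over cosets cannot see this phenomenon, so part 4 as you argue it is incomplete.
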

\begin{lemma}\label{l5.2}
Let $I$ and $J$ be two co-finite ideal of a commutative, associative, finitely generated algebra $S$ over $\mathbb C$. Then $IJ$ is a co-finite ideal of $S$. In particular, product of finite numbers of co-finite ideals of $S$ is co-finite.
\end{lemma}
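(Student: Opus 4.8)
The plan is to use the standard reformulation that an ideal $I$ is co-finite precisely when $\dim_{\mathbb{C}} S/I < \infty$, and then to compute $\dim_{\mathbb{C}} S/IJ$ by filtering $S$ through the chain $IJ \subseteq I \subseteq S$. Since $IJ \subseteq I$, I would write down the short exact sequence of $\mathbb{C}$-vector spaces
$$0 \longrightarrow I/IJ \longrightarrow S/IJ \longrightarrow S/I \longrightarrow 0,$$
whose existence is immediate (the second map is the canonical projection, which is well defined as $IJ \subseteq I$, and its kernel is exactly $I/IJ$). This gives $\dim_{\mathbb{C}} S/IJ = \dim_{\mathbb{C}} I/IJ + \dim_{\mathbb{C}} S/I$. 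The term $\dim_{\mathbb{C}} S/I$ is finite by hypothesis, so the whole problem reduces to showing that $I/IJ$ is finite dimensional over $\mathbb{C}$.

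To control $I/IJ$ I would first invoke the fact that a finitely generated commutative associative algebra over $\mathbb{C}$ is Noetherian (Hilbert basis theorem); consequently the ideal $I$ is finitely generated as an $S$-module, say $I = Sx_1 + \cdots + Sx_r$. Because $S$ is commutative we have $JI = IJ$, so $J$ annihilates $I/IJ$, and hence $I/IJ$ is naturally a module over the finite-dimensional algebra $S/J$, generated by the images $\overline{x}_1, \ldots, \overline{x}_r$. Each cyclic submodule $(S/J)\overline{x}_i$ is a homomorphic image of $S/J$, so $\dim_{\mathbb{C}} (S/J)\overline{x}_i \leq \dim_{\mathbb{C}} S/J$; summing over $i$ yields $\dim_{\mathbb{C}} I/IJ \leq r\, \dim_{\mathbb{C}} S/J < \infty$, the finiteness of $\dim_{\mathbb{C}} S/J$ being exactly the co-finiteness of $J$. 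Substituting back into the exact sequence shows $\dim_{\mathbb{C}} S/IJ < \infty$, that is, $IJ$ is co-finite.

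The final assertion then follows by an easy induction on the number of factors: for co-finite ideals $I_1, \ldots, I_m$ the binary case just proved shows $I_1 I_2$ is co-finite, and applying it repeatedly to $(I_1I_2)I_3$, and so on, keeps each successive product co-finite. I expect the only substantive point to be the finite-dimensionality of $I/IJ$, where one must combine finite generation of $I$ (from Noetherianness of $S$) with the finite-dimensionality of $S/J$; the exact-sequence splitting and the concluding induction are purely formal bookkeeping.
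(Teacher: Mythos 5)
Your proof is correct, and it takes a genuinely different route from the paper's. The paper argues through Artinian ring theory: it quotes the fact that a finitely generated commutative algebra over $\mathbb{C}$ is Artinian if and only if it is finite dimensional over $\mathbb{C}$ (Atiyah--Macdonald, Ch.\ 8, Ex.\ 3), considers the homomorphism $f \colon S/IJ \to S/I \times S/J$, $a+IJ \mapsto (a+I,\,a+J)$, and concludes that $S/IJ$ is Artinian, hence finite dimensional, by presenting it as an extension of the finite-dimensional algebra $f(S/IJ)$ by the kernel of $f$. You instead filter through $IJ \subseteq I \subseteq S$, obtain finite generation of $I$ from Noetherianness of $S$ (Hilbert basis theorem), and bound $\dim_{\mathbb{C}} I/IJ \le r\,\dim_{\mathbb{C}} S/J$ by regarding $I/IJ$ as a finitely generated module over the finite-dimensional algebra $S/J$. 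Your route is more elementary (no Artinian characterization is needed, only Noetherianness) and, as written, more careful: the paper's displayed sequence $0 \to S/I \to S/IJ \to f(S/IJ) \to 0$ is not correct as stated, since the kernel of $S/IJ \to f(S/IJ)$ is $(I \cap J)/IJ$ rather than $S/I$, and establishing that this kernel is Artinian requires exactly the kind of argument you spelled out for $I/IJ$ (it is finitely generated, by Noetherianness again, and annihilated by $I+J$, hence a finitely generated module over the finite-dimensional algebra $S/(I+J)$). What the paper's approach buys is brevity, since the key equivalence can be quoted wholesale; what yours buys is a self-contained dimension count with the explicit bound $\dim_{\mathbb{C}} S/IJ \le \dim_{\mathbb{C}} S/I + r\,\dim_{\mathbb{C}} S/J$. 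The concluding induction on the number of factors is the same formality in both.
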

\begin{proof}
To prove this we use the fact that a finitely generated algebra over $\mathbb C$ is Artinian iff it is a finite dimensional over $\mathbb C$, \cite{14} (Chapter 8, Exercise 3).
Now, dim$S/I$ and dim$S/J$ both are finite so both are Artinian ring. \\
Consider the homomorphism $f: S/IJ \to S/I \times S/J$ by $f(a+IJ)=(a+I,a+J)$, then $f(S/IJ)$ is finitely generated as an algebra over $\mathbb C$ and is of finite dimensional. Hence $f(S/IJ)$ is an Artinian ring.\\
Now consider an exact sequence $0 \to S/I \to S/IJ \to f(S/IJ) \to 0$, since both $S/I$ and $f(S/IJ)$ are Artinian, so is  $S/IJ$. 

\end{proof}

\begin{proposition}\label{p5.6}
Let $V$ be an irreducible module for $\dot{\mathfrak{g}} \otimes S $ with finite dimensional weight space with respect to  $\dot{\mathfrak{h}}$, where $S$ is a commutative associative finitely generated unital algebra over $\mathbb{C}$. Then there exists a co-finite ideal $I$ of $S$ such that $V$ is a module for a finite dimensional algebra $\dot{\mathfrak{g}} \otimes S/I $ .
\end{proposition}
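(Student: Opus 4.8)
The plan is to produce a single co-finite ideal $I\subseteq S$ that annihilates $V$, meaning $(\dot{\mathfrak g}\otimes I)V=0$; granting this, $V$ becomes a module for the quotient algebra $\dot{\mathfrak g}\otimes S/I$, whose dimension is $(\dim_{\mathbb C}\dot{\mathfrak g})(\dim_{\mathbb C}S/I)<\infty$, which is exactly the assertion. Fix once and for all a weight $\mu$ with $V_\mu\neq 0$; by irreducibility $V=U(\dot{\mathfrak g}\otimes S)V_\mu$. The first reduction is that it suffices to find a co-finite ideal $\mathcal I$ of $S$ with $(\dot{\mathfrak g}\otimes\mathcal I)V_\mu=0$. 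Indeed, writing a general vector as a sum of terms $(x_{\beta_1}\otimes s_1)\cdots(x_{\beta_l}\otimes s_l)v$ with $v\in V_\mu$, one shows that $(z\otimes s)$ kills every such term for all $z\in\dot{\mathfrak g}$ and $s\in\mathcal I$ by induction on the word length $l$: moving $z\otimes s$ to the right produces, besides a term handled by the inductive hypothesis, the term $([z,x_{\beta_1}]\otimes ss_1)(\,\cdots)$, and since $\mathcal I$ is an \emph{ideal} we still have $ss_1\in\mathcal I$, so this too is killed by induction. Hence $(\dot{\mathfrak g}\otimes\mathcal I)V=0$.

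The heart of the argument is the construction of such a co-finite ideal $\mathcal I$ contained in $\{s\in S:(\dot{\mathfrak g}\otimes s)V_\mu=0\}$. Fix a root $\gamma\in\dot\Delta$ with root vector $x_\gamma$, vectors $v\in V_\mu$ and $\phi\in V_{\mu+\gamma}^{*}$, and consider the linear functional $\ell(x)=\phi\big((x_\gamma\otimes x)v\big)$ on $S$. Choosing $h\in\dot{\mathfrak h}$ with $\gamma(h)\neq 0$ and using $x_\gamma\otimes ss'=\tfrac{1}{\gamma(h)}[\,h\otimes s,\ x_\gamma\otimes s'\,]$, the operator $(x_\gamma\otimes ss')|_{V_\mu}$ becomes a \emph{bilinear} expression in $(s,s')$ built from the maps $s\mapsto(h\otimes s)|_{V_{\mu+\gamma}}$, $s\mapsto(h\otimes s)|_{V_\mu}$ and $s'\mapsto(x_\gamma\otimes s')|_{V_\mu}$; all of these have finite rank because the weight spaces are finite dimensional. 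Consequently the bilinear form $b(s,s')=\ell(ss')$ on $S$ has finite rank, so its radical $\{s\in S:\ell(sS)=0\}$ is a \emph{co-finite ideal} of $S$ through which $\ell$ factors. The coroot directions are treated the same way via $h_\beta\otimes ss'=[\,e_\beta\otimes s,\ f_\beta\otimes s'\,]$. Intersecting these co-finite radical ideals over a basis of $V_\mu$ and of the $V_{\mu+\gamma}^{*}$, and over the finitely many roots $\gamma$ and coroot directions $h_\beta$ — invoking Lemma \ref{l5.2} to keep a finite product of co-finite ideals co-finite — yields the desired co-finite ideal $\mathcal I$ with $(\dot{\mathfrak g}\otimes\mathcal I)V_\mu=0$.

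Assembling the two steps gives the statement with $I=\mathcal I$. I expect the \textbf{main obstacle} to be exactly the finite-rank step: one must convert finite dimensionality of the weight spaces into a genuine co-finite \emph{ideal}, not merely a co-finite subspace. This is a real point, since the largest ideal contained in a co-finite subspace of $S$ need not be co-finite; the way around it is to exploit the multiplicative (Hankel-type) structure $b(s,s')=\ell(ss')$ coming from the commutation relations, which ties the root action on $V_\mu$ to multiplication in $S$ and forces the radical of $b$ to be co-finite. The remaining ingredients — the ideal property, the PBW propagation of paragraph one, and the bookkeeping with Lemma \ref{l5.2} — are routine.
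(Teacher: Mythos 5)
Your proof is correct, but it takes a genuinely different route from the paper at the key step. The paper first applies Lie's theorem to the action of the commutative algebra $\dot{\mathfrak h}\otimes S$ on a finite-dimensional weight space $V_\lambda$ to produce a common eigenvector $v$, i.e. $(h\otimes s)v=\mu(h\otimes s)v$; for such a $v$ the naive annihilator $I_\alpha=\{s\in S: (X_\alpha\otimes s)v=0\}$ is already an ideal (the relation $[h\otimes t, X_\alpha\otimes s]=\alpha(h)\,X_\alpha\otimes st$ together with the eigenvector property gives $st\in I_\alpha$), and co-finiteness is immediate since $s\mapsto (X_\alpha\otimes s)v$ embeds $S/I_\alpha$ into the finite-dimensional space $V_{\lambda+\alpha}$. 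The paper then takes $I=\prod_{\alpha\in\dot\Delta}I_\alpha$ — this is where Lemma \ref{l5.2} is genuinely needed, because the product (not the intersection) is what kills the Cartan directions via $h_\alpha\otimes st=[X_\alpha\otimes s, X_{-\alpha}\otimes t]$ — and finally spreads annihilation from the single vector $v$ to all of $V$ by observing that $\{w\in V: (\dot{\mathfrak g}\otimes I)w=0\}$ is a nonzero submodule; this is exactly the fact you prove by hand with your PBW induction. You avoid Lie's theorem entirely: since you work with the whole weight space $V_\mu$ rather than an eigenvector, the ideal property cannot be read off from an eigenvalue, and you instead manufacture it from the Hankel structure $b(s,s')=\ell(ss')$, with co-finiteness of the radical forced by the finite rank of $b$, which in turn comes from the finite-dimensional weight spaces. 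This buys you a little more: the annihilating ideal kills all of $V_\mu$ at once, the Cartan directions are handled by the same mechanism (the radical of $(s,s')\mapsto\phi((h_\beta\otimes ss')v)$, then set $s'=1$, using that $S$ is unital) instead of requiring the product ideal, and Lemma \ref{l5.2} becomes dispensable, since a finite intersection of co-finite ideals is automatically co-finite. What the paper's route buys is brevity: the eigenvector trick makes the ideal property free. Your diagnosis of the main obstacle — that a co-finite subspace need not contain a co-finite ideal, so the multiplicative structure coming from the commutation relations must be exploited — is exactly the right point, and the two proofs are two different ways of meeting it.
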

\begin{proof}
Let $V_\lambda$ be a weight spaces of $V$. Then $V_\lambda $ is invariant under the commutative sub-algebra $\dot{\mathfrak{h}}\otimes S$ of $\dot{\mathfrak{g}}\otimes S$. Hence by Lie's theorem there exists a non zero vector $v \in V_\lambda$ such that $h\otimes s.v=\mu(h\otimes s)v$, for some $\mu \in (\dot{\mathfrak h}\otimes S)^*$, for all $h \in \dot{\mathfrak h}$, $s \in S$.\\ 
Fix a root $\alpha $ of $\dot{\mathfrak g}$ and let $X_\alpha$ be the root vector corresponding to the root $\alpha$.
Let $I_\alpha= \{s\in S: X_\alpha \otimes s.v=0  \}$\\
{\bf Claim:} $I_\alpha$ is an ideal of $S$.\\
Let $s \in I_\alpha $ and $t\in S$, then $X_\alpha \otimes s.v=0$. Now consider $h \in \mathfrak{\dot h} $ such that $\alpha(h)\neq 0$.\\
Then 

$$h\otimes t . X_\alpha \otimes s.v = 0, $$
 $$ \Rightarrow \left[h\otimes t , X_\alpha \otimes s\right].v + X_\alpha \otimes s . h\otimes t.v =0 ,$$
 $$  \Rightarrow \left[h\otimes t , X_\alpha \otimes s\right].v + \mu( h\otimes t) X_\alpha \otimes s .v =0,  $$
 $$  \Rightarrow \left[h , X_\alpha \right]\otimes st.v=0, $$ 
 i.e $ \alpha{(h)} X_\alpha \otimes{ st}.v=0$, hence $st \in I_\alpha$, since $\alpha(h) \neq 0.$\\
{\bf Claim :} dim$S/I_\alpha$ is finite.\\
Let dim$V_{\lambda + \alpha} \leq k$. Now consider $n (> k)$ non zero elements in $S/I_\alpha$, say $s_1+I_\alpha,s_2+I_\alpha,...,s_n+I_\alpha$.\\
Consider the vectors $X_\alpha\otimes s_1.v,X_\alpha\otimes s_2.v,...., X_\alpha\otimes s_n.v$ in $V_{\lambda + \alpha}. $ Since dim$V_{\lambda +\alpha }$ is $ \leq k$ so, there exists $c_1,c_2,...,c_n \in \mathbb C$ not all zero such that 
$$X_\alpha \otimes {\displaystyle{\sum_{i=1}^{n}}c_is_i.v}=0, $$ hence $\displaystyle{\sum_{i=1}^{n}}c_is_i \in I_\alpha$. So any set of $n$ non zero vectors in $S/I_\alpha$ are linearly dependent. Hence dim$S/I_\alpha \leq k.$ \\
Now consider $I=\displaystyle{\prod_{\alpha \in \dot{\Delta}}I_\alpha}$, then $S/I$ is finite dimensional, by Lemma \ref{l5.2}.\\
{\bf Claim:} $\mathfrak{\dot g}\otimes I .v=0$\\
Clearly $\mathfrak{\dot g}^{\pm}\otimes I .v =0 $, since $I \subseteq I_\alpha$ for all $\alpha \in \dot{\Delta.}$  Again, $h_{\alpha}\otimes I.v=\left[ X_\alpha, X_{-\alpha} \right]\otimes I.v=0$ for all $\alpha \in \dot{\Delta,}$ Hence the claim. 
\\
Now consider $W=\{v\in V :\dot{\mathfrak g}\otimes I.v=0  \}$, which is a non zero sub-module of $V$, hence by irreducibility $W=V$. Thus $V$ is a module for $\dot{\mathfrak g}\otimes S/I$.

\end{proof}

\begin{lemma}\label{l5.3}$($Lemma 5.1, \cite{1}$)$
For $n \geq 1,$ any $\mathbb{Z}^{n}$ graded simple, commutative algebra of which each graded component is finite dimensional is isomorphic to a subalgebra of $A_{n}$ such that each homogeneous element is invertible.
\end{lemma}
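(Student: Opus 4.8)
The plan is to show that graded simplicity forces the algebra to be a \emph{graded field} --- every nonzero homogeneous element is invertible with homogeneous inverse --- and that such an algebra is forced to be the group algebra of its support. Write the given algebra as $R=\bigoplus_{\underline m\in\mathbb{Z}^n}R_{\underline m}$, with $R_0$ the component of degree $(0,\dots,0)$, and recall that because $R$ is commutative every ideal is two-sided and, when generated by a homogeneous element, is automatically graded.

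First I would prove $R_0=\mathbb{C}$. Given $0\neq a\in R_0$, the ideal $Ra$ is graded (since $a$ is homogeneous) and nonzero, so graded simplicity gives $Ra=R$; in particular $ba=1$ for some $b=\sum_{\underline m}b_{\underline m}$. Since $\deg a=0$, comparing the degree-$0$ parts yields $b_0a=1$ with $b_0\in R_0$, so $R_0$ is a field. As each graded component is finite dimensional, $R_0$ is a finite extension of $\mathbb{C}$, hence $R_0=\mathbb{C}$. Running the identical argument with $0\neq a\in R_{\underline m}$ and comparing the degree-$(-\underline m)$ component of $ba=1$ produces an inverse $b_{-\underline m}\in R_{-\underline m}$. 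Thus every nonzero homogeneous element is invertible, so $R$ has no homogeneous zero divisors, and the support $\Gamma=\{\underline m:R_{\underline m}\neq 0\}$ is a subgroup of $\mathbb{Z}^n$: it is closed under negation (inverses have opposite degree) and under addition (products of invertible elements are nonzero). Moreover each $R_{\underline m}$ with $\underline m\in\Gamma$ is one dimensional, since for nonzero $a,b\in R_{\underline m}$ we have $ab^{-1}\in R_0=\mathbb{C}$.

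Finally I would assemble the isomorphism. As $\Gamma\le\mathbb{Z}^n$ is free, fix a basis $e_1,\dots,e_k$ and nonzero (hence invertible) elements $u_i\in R_{e_i}$; for $\underline m=\sum_i m_ie_i$ set $v_{\underline m}=\prod_{i=1}^k u_i^{m_i}$, allowing negative powers. By associativity and commutativity $v_{\underline m}v_{\underline n}=v_{\underline m+\underline n}$, and $v_{\underline m}\in R_{\underline m}$ is nonzero, hence spans the one-dimensional space $R_{\underline m}$. Therefore $\{v_{\underline m}:\underline m\in\Gamma\}$ is a basis of $R$ with group-algebra multiplication, giving $R\cong\mathbb{C}[\Gamma]$ as $\mathbb{Z}^n$-graded algebras. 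Identifying $\Gamma\subseteq\mathbb{Z}^n$ yields a graded embedding $\mathbb{C}[\Gamma]\hookrightarrow\mathbb{C}[\mathbb{Z}^n]=A_n$ sending $v_{\underline m}\mapsto t^{\underline m}$, whose image is a subalgebra of $A_n$ in which every homogeneous element, being a scalar times a Laurent monomial, is invertible.

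The step I expect to be the main obstacle is the passage from ``every component is one dimensional'' to the clean identification $R\cong\mathbb{C}[\Gamma]$: a priori the multiplication $u_{\underline m}u_{\underline n}=c(\underline m,\underline n)u_{\underline m+\underline n}$ is governed by a symmetric $2$-cocycle $c:\Gamma\times\Gamma\to\mathbb{C}^{*}$, which could make $R$ a \emph{twisted} group algebra rather than $\mathbb{C}[\Gamma]$. I would circumvent this not by a cohomology-vanishing argument but by the construction above: defining $v_{\underline m}$ as the fixed iterated product $\prod u_i^{m_i}$ of chosen generators makes the twisting coboundary-trivial by fiat, since associativity then forces $v_{\underline m}v_{\underline n}=v_{\underline m+\underline n}$ with no scalar factor. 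Everything else (invertibility, the subgroup property, one-dimensionality) is a direct consequence of graded simplicity together with $\mathbb{C}$ being algebraically closed.
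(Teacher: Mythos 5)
Your proof is correct, and it is essentially the proof behind the paper's citation: the paper gives no argument for Lemma \ref{l5.3} at all, referring only to Lemma 5.1 of \cite{1}, and your route (graded simplicity forces $R_0=\mathbb{C}$ and invertibility of all nonzero homogeneous elements, hence one-dimensional components and a subgroup $\Gamma\leq\mathbb{Z}^n$ as support, then a multiplicative basis $v_{\underline m}=\prod_i u_i^{m_i}$ over a free basis of $\Gamma$ to trivialize the would-be $2$-cocycle and identify $R\cong\mathbb{C}[\Gamma]\hookrightarrow A_n$) is the standard argument that the cited lemma uses. One point you should make explicit: you invoke a unit ($ba=1$) without comment, which is harmless here --- the conclusion ``each homogeneous element is invertible'' presupposes a unit, the algebra this lemma is applied to in Theorem \ref{t5.1} is $U(\mathfrak{h}_1)v\cong U(\mathfrak{h}_1)/\ker\phi$ and hence unital, and in any case graded simplicity of a commutative algebra with $R^2\neq 0$ already produces a unit, since $Ra=R$ for nonzero homogeneous $a$ gives $u$ with $ua=a$, and then $ux=x$ for every $x\in Ra=R$.
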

\begin{theorem}\label{t5.1}
Let $V$ be an irreducible integrable module for $\tau(B)$ with finite dimensional weight spaces with respect to $H$. Then $V \simeq V(\overline \psi)$ for some $\mathbb{Z}^{n+1}$ graded homomorphism $\overline \psi :U(\mathfrak{h}_1) \mapsto A $ such that $A_{\overline \psi}$ is irreducible $\widetilde {\mathfrak{h}_1}$-module. Moreover $V(\psi) $ is finite dimensional module for $\tau_1$, where $\psi =E(1)\overline \psi$.
\end{theorem}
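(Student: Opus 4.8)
The plan is to combine the two structural reductions already in place—triviality of the whole centre and the passage to a highest weight module—and then to force the non-graded module into the finite-dimensional setting of Proposition \ref{p5.6}. First I would dispose of the centre: by Proposition \ref{p5.2}, $Z\otimes B$ acts trivially, so $V$ is an irreducible integrable module for $\widetilde{\tau}_1=\dot{\mathfrak g}\otimes A\otimes B\oplus D$ with finite-dimensional weight spaces. Proposition \ref{p5.1} furnishes a weight vector $v_0$ with $\dot{\mathfrak g}^+\otimes A\otimes B\cdot v_0=0$, and since $V$ is irreducible we get $V=U(\dot{\mathfrak g}^-\otimes A\otimes B)\,U(\widetilde{\mathfrak h}_1)v_0$, so $V$ is a graded highest weight module with top space $W=U(\widetilde{\mathfrak h}_1)v_0$. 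A standard weight argument shows $W$ is irreducible as a $\widetilde{\mathfrak h}_1$-module, any proper nonzero $\widetilde{\mathfrak h}_1$-submodule generating, under $U(\dot{\mathfrak g}^-\otimes A\otimes B)$, a proper submodule of $V$. Because $\mathfrak h_1=\dot{\mathfrak h}\otimes A\otimes B$ is commutative and $D$ induces a $\mathbb Z^{n+1}$-grading with finite-dimensional homogeneous pieces, the algebra of operators through which $U(\mathfrak h_1)$ acts on $W$ is a $\mathbb Z^{n+1}$-graded simple commutative algebra with finite-dimensional components; by Lemma \ref{l5.3} it is a graded subalgebra of $A$ in which every homogeneous element is invertible. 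This yields a $\mathbb Z^{n+1}$-graded morphism $\overline\psi:U(\mathfrak h_1)\to A$ with $W\simeq A_{\overline\psi}$ (cf.\ Lemma \ref{l5.1}), whence $V$ is a quotient of $M(\overline\psi)$, and irreducibility forces $V\simeq V(\overline\psi)$.

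For the second assertion I would transfer finiteness through Proposition \ref{p5.5}. Since $V\simeq V(\overline\psi)$ has finite-dimensional weight spaces, Proposition \ref{p5.5}(4) gives that $V(\psi)$, the non-graded irreducible highest weight $\tau_1$-module attached to $\psi=E(1)\overline\psi$, has finite-dimensional weight spaces with respect to $\dot{\mathfrak h}$. Now $\tau_1=\dot{\mathfrak g}\otimes S$ with $S=A\otimes B=A_{n+1}\otimes B$ a commutative, associative, finitely generated unital algebra over $\mathbb C$, so Proposition \ref{p5.6} applies directly: there is a co-finite ideal $I\subseteq S$ such that $V(\psi)$ is a module for the finite-dimensional Lie algebra $\dot{\mathfrak g}\otimes S/I$.

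Finally I would deduce finite-dimensionality. As a module for $\dot{\mathfrak g}\otimes S/I$ the module $V(\psi)$ remains a highest weight module, annihilated by $\dot{\mathfrak g}^+\otimes S/I$, with dominant integral highest weight $\overline\lambda=\psi|_{\dot{\mathfrak h}}$, and its restriction to $\dot{\mathfrak g}=\dot{\mathfrak g}\otimes 1$ is integrable, inherited via Proposition \ref{p5.5} from the integrability of $V$ (the diagonal $\dot{\mathfrak g}$-action on $V(\psi)\otimes A$ makes $V(\psi)$ integrable exactly when $V(\overline\psi)$ is). Hence the set of $\dot{\mathfrak h}$-weights $P(V(\psi))$ is $\dot\Omega$-invariant and bounded above by $\overline\lambda$, so it lies in the finite saturated set with highest weight $\overline\lambda$. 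Since $\dot{\mathfrak g}^-\otimes S/I$ is now finite-dimensional, each weight $\mu\in P(V(\psi))$ is attained by only finitely many PBW monomials of fixed $\dot{\mathfrak h}$-weight $\mu-\overline\lambda$, so each weight space is finite-dimensional—this is also exactly the finiteness supplied by Proposition \ref{p5.5}(4). A finite sum of finite-dimensional weight spaces is finite-dimensional, so $V(\psi)$ is finite-dimensional, as claimed.

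I expect the main obstacle to be the construction in the first step: producing the graded morphism $\overline\psi$ and identifying the top space $W\simeq A_{\overline\psi}$, that is, recognizing $W$ as a graded-simple commutative object so that Lemma \ref{l5.3} and Lemma \ref{l5.1} apply. By contrast, the reduction to a finite-dimensional algebra and the ensuing integrability and weight-boundedness argument are comparatively routine once the highest weight datum $\overline\psi$ and the finiteness of weight spaces are secured.
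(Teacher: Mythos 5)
Your proposal is correct and follows essentially the same route as the paper's proof: kill the center by Proposition~\ref{p5.2}, take the highest weight vector from Proposition~\ref{p5.1}, identify the irreducible top space $U(\widetilde{\mathfrak h}_1)v_0$ with $A_{\overline\psi}$ via Lemmas~\ref{l5.3} and~\ref{l5.1} to get $V\simeq V(\overline\psi)$, then use Proposition~\ref{p5.5}(4) and Proposition~\ref{p5.6} to make $V(\psi)$ a module for the finite-dimensional algebra $\dot{\mathfrak g}\otimes (A\otimes B/I)$. The only difference is cosmetic and occurs in the last step: the paper concludes finite-dimensionality from PBW together with local nilpotency of $\dot{\mathfrak g}^-\otimes(A\otimes B/I)$ on the highest weight vector, whereas you multiply a finite, $\dot\Omega$-invariant, saturated weight set by finite-dimensional weight spaces---both arguments are routine and equivalent.
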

\begin{proof}
By Proposition \ref{p5.1}, $V$ is actually a module for $\widetilde \tau_1$. Now consider the $\widetilde{\mathfrak h}_1$ module $W=U(\widetilde{\mathfrak h}_1)v$ for some weight vector $v$ of weight $\lambda$. Then 
$$ W=U(\widetilde{\mathfrak h}_1)v = U({\mathfrak h}_1)v =  \displaystyle{\bigoplus_{\underline m \in \mathbb{Z}^{n+1} }}V_{\lambda + \delta_{\underline m }}\cap W.$$  Therefore each graded component of $W$ is finite dimensional. \\ 
Define $\phi : U({\mathfrak h}_1) \mapsto W $ by $X_{\underline r} \mapsto X_{\underline r}v$ for $X_{\underline r} \in   U({\mathfrak h}_1)_{{\underline r} }$, where $ U({\mathfrak h}_1)_{{\underline r} }$ is the $\underline r$-th component of $ U({\mathfrak h}_1)$. Clearly $\phi$ is a $\mathbb{Z}^{n+1}$ graded surjective $\mathfrak {h}_1$ module  morphism. Thus $ U({\mathfrak h}_1)/ker \phi \simeq W$. Now irreducibility of $V$ imply that $W$ is irreducible $\widetilde{\mathfrak h}_1$ module. Hence $ U({\mathfrak h}_1)/ker \phi$ is a commutative, associative graded simple algebra, since $U({\mathfrak h}_1) $ is commutative. Now it follows from Lemma \ref{l5.3}, that there exists a $\mathbb{Z}^{n+1}$ graded injective homomorphism $\eta: U({\mathfrak h}_1)/ker \phi \to A.$ Let $\eta_1:U({\mathfrak h}_1) \to U({\mathfrak h}_1)/ker \phi$ be the canonical map and consider $\overline \psi= \eta\circ \eta_1.$ Then $\eta$ becomes an $\mathfrak{h}_1$-module isomorphism between $U({\mathfrak h}_1)/ker \phi$ and $A_{\overline \psi}$ and hence $W\simeq A_{\overline \psi}$ as $\mathfrak{h}_1$-module. Now in the action of (\ref{a5.2}), for fixed choice of $\underline \alpha=(\lambda(d_1),\lambda(d_2),....,\lambda(d_{n+1}))$ this isomorphism extend to an $\widetilde{\mathfrak{h}}_1$-module isomorphism between $W$ and $A_{\overline \psi}$. Hence $V \simeq V(\overline \psi).$\\
Clearly $V(\psi)$ is an irreducible integrable module for $\tau_1$. Again by Proposition \ref{p5.5}, $V(\psi)$ has finite dimensional weight spaces with respect to $\mathfrak h_1$. Now by Proposition \ref{p5.6}, there exists a co-finite ideal $I$ of $A \otimes B$ such that $V(\psi)$ is a module for $\mathfrak{\dot g} \otimes( A\otimes B/I).$ Now consider the highest weight vector $v \in V(\psi)$, then by PBW theorem $V(\psi)=U(\mathfrak{\dot g}^-\otimes( A\otimes B/I) )v. $ But each vector of $ \mathfrak{\dot g}^-\otimes( A\otimes B/I) $ acts locally nilpotently on $v$. Hence $V(\psi)$ is finite dimensional. 

\end{proof}

Let $S$ be a finitely generated commutative associative unital algebra over $\mathbb{C}$. Let $Max S$ denote the set of all maximal ideals of $S$. Let $Supp(Max S, \dot P^+)$ denote the set of all finitely supported functions from $Max S$ to $\dot P^+$ and $|Supp(Max S,\dot P^+)|$ be its cardinality. 
\begin{theorem}\label{5.2}
Let S be an associative commutative finitely generated unital algebra over $\mathbb{C}$. Then any finite dimensional irreducible modules for $\dot{\mathfrak{g}}\otimes S $ reduces to a modules for $\displaystyle{\bigoplus_{N_0-copies}\mathfrak{\dot g}} $ $(N_0 = | Supp(Max S,\dot P^+)|) $ isomorphic to
 $\displaystyle{\bigotimes_{\chi(M)\neq 0}} V(\chi(M))$ ,
 for some $\chi \in Supp(Max S,\dot P^+)$,      
\end{theorem}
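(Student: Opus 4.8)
The plan is to reduce $V$ to a module over a direct sum of finitely many copies of $\dot{\mathfrak{g}}$ and then apply the tensor product decomposition, recovering the evaluation-module classification of \cite{10}. First I would apply Proposition \ref{p5.6}: a finite dimensional module automatically has finite dimensional weight spaces with respect to $\dot{\mathfrak{h}}$, so there is a co-finite ideal $I$ of $S$ such that $V$ is a module for the finite dimensional Lie algebra $\dot{\mathfrak{g}}\otimes\bar S$, where $\bar S=S/I$. The algebra $\bar S$ is then a finite dimensional commutative associative unital $\mathbb C$-algebra, hence Artinian, and the structure theorem for Artinian rings (via the Chinese Remainder Theorem) splits it as a finite product $\bar S\cong\prod_{j=1}^{m}\bar S_j$ of local rings indexed by the maximal ideals $M_1,\dots,M_m$ of $S$ that contain $I$. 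Because $\mathbb C$ is algebraically closed each residue field $\bar S_j/\mathfrak m_j$ is $\mathbb C$, and each $\mathfrak m_j$ is nilpotent.

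Next I would pass to the nilradical $N$ of $\bar S$, that is, the product of the $\mathfrak m_j$, so that $\bar S/N\cong\mathbb C^{m}$ and $\dot{\mathfrak{g}}\otimes(\bar S/N)\cong\bigoplus_{j=1}^{m}\dot{\mathfrak{g}}$. The subspace $\dot{\mathfrak{g}}\otimes N$ is an ideal of $\dot{\mathfrak{g}}\otimes\bar S$; it is solvable, indeed nilpotent, since $N^{k}=0$ for large $k$ forces the lower central series of $\dot{\mathfrak{g}}\otimes N$ to terminate, and the quotient $\dot{\mathfrak{g}}\otimes(\bar S/N)$ is semisimple, so $\dot{\mathfrak{g}}\otimes N$ is exactly the solvable radical of $\dot{\mathfrak{g}}\otimes\bar S$. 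Since $\dot{\mathfrak{g}}$ is perfect and $N$ is an ideal of $\bar S$ we also have $[\dot{\mathfrak{g}}\otimes\bar S,\dot{\mathfrak{g}}\otimes N]=\dot{\mathfrak{g}}\otimes(\bar S N)=\dot{\mathfrak{g}}\otimes N$. The key step is now the classical corollary to Lie's theorem: for a finite dimensional Lie algebra $\mathfrak L$ over $\mathbb C$ the subspace $[\mathfrak L,\mathrm{rad}\,\mathfrak L]$ annihilates every finite dimensional irreducible module. Applying this with $\mathfrak L=\dot{\mathfrak{g}}\otimes\bar S$ and $\mathrm{rad}\,\mathfrak L=\dot{\mathfrak{g}}\otimes N=[\mathfrak L,\dot{\mathfrak{g}}\otimes N]$ shows that $\dot{\mathfrak{g}}\otimes N$ acts trivially on $V$.

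Consequently $V$ descends to an irreducible module for $\dot{\mathfrak{g}}\otimes(\bar S/N)\cong\bigoplus_{j=1}^{m}\dot{\mathfrak{g}}$, a direct sum of finitely many copies of the simple Lie algebra $\dot{\mathfrak{g}}$. Here I would invoke the standard fact that a finite dimensional irreducible module for a finite direct sum of simple Lie algebras is the outer tensor product of finite dimensional irreducibles of the summands, so that $V\cong\bigotimes_{j=1}^{m}V(\lambda_j)$ with each $\lambda_j\in\dot P^+$. Discarding the factors with $\lambda_j=0$ and recording the remaining data as a finitely supported function $\chi\colon\mathrm{Max}\,S\to\dot P^+$, with $\chi(M_j)=\lambda_j$ for the relevant $j$ and $\chi(M)=0$ otherwise, gives $\chi\in\mathrm{Supp}(\mathrm{Max}\,S,\dot P^+)$ and $V\cong\bigotimes_{\chi(M)\neq 0}V(\chi(M))$, where $N_0=|\{M:\chi(M)\neq 0\}|$ is the number of copies of $\dot{\mathfrak{g}}$ that survive.

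I expect the main obstacle to be the triviality of $\dot{\mathfrak{g}}\otimes N$ on $V$; this is exactly where perfectness of $\dot{\mathfrak{g}}$ (to identify $\dot{\mathfrak{g}}\otimes N$ with $[\mathfrak L,\mathrm{rad}\,\mathfrak L]$) and finite dimensionality of $V$ (to apply the Lie-theoretic vanishing) are indispensable. Everything preceding it is the routine ring-theoretic splitting of $\bar S$ built on Proposition \ref{p5.6} and Lemma \ref{l5.2}, and everything after it is the standard tensor product decomposition over a semisimple direct sum, which is the content drawn from \cite{10}.
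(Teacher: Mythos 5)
Your proof is correct, but it takes a genuinely different route from the paper, which in fact contains no argument at all: the paper's entire proof of Theorem \ref{5.2} is a citation to the Proposition in Section 6 of \cite{10} (Chari--Fourier--Khandai). Your version reconstructs that external result using only ingredients already available in the paper plus classical structure theory: Proposition \ref{p5.6} to pass to a co-finite quotient $\dot{\mathfrak{g}}\otimes S/I$ (your remark that a finite dimensional module qualifies is right, though it silently uses Weyl's theorem for the restriction to $\dot{\mathfrak{g}}\otimes 1$ to get the weight space decomposition); the Artinian splitting of $S/I$ into local factors with residue field $\mathbb{C}$; the identification of $\dot{\mathfrak{g}}\otimes N$ with $[\mathfrak{L},\mathrm{rad}\,\mathfrak{L}]$ via perfectness of $\dot{\mathfrak{g}}$, which is then annihilated on irreducibles by the corollary to Lie's theorem; and the outer tensor product factorization over $\bigoplus_{j}\dot{\mathfrak{g}}$. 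Each step checks out, and the resulting evaluation-module description agrees with the cited statement. What the paper's approach buys is brevity and deference to the established literature; what yours buys is a self-contained proof that makes explicit where each hypothesis enters (perfectness of $\dot{\mathfrak{g}}$, finite dimensionality of $V$, algebraic closure of $\mathbb{C}$) and shows the theorem is really a consequence of Proposition \ref{p5.6} and Lemma \ref{l5.2} together with standard Lie theory. One cosmetic point: the theorem's assertion ``$N_0=|Supp(Max\,S,P^+)|$'' is garbled as printed, since the set of all finitely supported functions is infinite; your reading of $N_0$ as the number of maximal ideals in the support of $\chi$ is the intended one.
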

\begin{proof}
Proposition in section 6 of \cite{10}.  
\end{proof}
\begin{theorem}\label{t5.3}
Let $V$ be an irreducible integrable representation of $\tau(B)$ with finite dimensional weight spaces with respect to $H$ such that all $K_i$'s acts trivially on $V$. Then \\
$(i)$ there exists a $\mathbb{Z}^{n+1}$ graded morphism $\overline \psi :U(\mathfrak{h}_1) \mapsto A $ such that $V\simeq V(\overline \psi)$.\\
$(ii)$ $V(\overline \psi) $ is isomorphic to an irreducible component of $V(\psi) \otimes A$ and $V(\psi)$ is a finite dimensional irreducible module for  $\displaystyle{\bigoplus_{N_0-copies}\mathfrak{\dot g}} $ $(N_0 = | Supp(Max A \otimes B,\dot P^+)|) $  which is isomorphic to  $\displaystyle{\bigotimes_{\chi(M)\neq 0}} V(\chi(M)),$ 
 for some $\chi \in Supp(Max A\otimes B, \dot P^+)$.       
 
\end{theorem}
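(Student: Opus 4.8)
The plan is to assemble the statement from the structural results already proved in this section, since the heavy analytic work has been carried out upstream. First I would invoke Proposition \ref{p5.2}: under the hypothesis that every $K_i$ acts trivially on $V$, the entire center $Z \otimes B$ annihilates $V$. Consequently the $\tau(B)$-action factors through the quotient $\dot{\mathfrak g}\otimes A \otimes B \oplus D = \widetilde\tau_1$, so that $V$ is an irreducible integrable $\widetilde\tau_1$-module with finite dimensional weight spaces with respect to $\widetilde{\mathfrak h}_1$.

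Next I would apply Theorem \ref{t5.1} directly. It produces a $\mathbb{Z}^{n+1}$-graded algebra morphism $\overline\psi : U(\mathfrak h_1) \to A$ with $V \simeq V(\overline\psi)$, and simultaneously asserts that the associated non-graded module $V(\psi)$, where $\psi = E(1)\overline\psi$, is finite dimensional as a $\tau_1$-module. This single invocation delivers the first assertion of the theorem together with the finite dimensionality needed below. Then I would recall Proposition \ref{p5.5}: with $A_{\overline\psi}$ irreducible as a $\widetilde{\mathfrak h}_1$-module (which is guaranteed inside the proof of Theorem \ref{t5.1} via Lemma \ref{l5.1} and Lemma \ref{l5.3}), one obtains the decomposition $V(\psi)\otimes A = \bigoplus_{\underline m \in G} U(v\otimes t^{\underline m})$ into irreducible $\widetilde\tau_1$-submodules, together with the identification $V(\overline\psi)\simeq U(v\otimes 1)$. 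This exhibits $V(\overline\psi)$, hence $V$, as an irreducible component of $V(\psi)\otimes A$, giving the middle assertion.

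Finally, to analyze $V(\psi)$ itself, the key observation is that $\tau_1 = \dot{\mathfrak g}\otimes A \otimes B = \dot{\mathfrak g}\otimes S$ is precisely the loop algebra over the single commutative algebra $S = A \otimes B = A_{n+1}\otimes B$, which is associative, commutative, and finitely generated unital over $\mathbb{C}$, being the tensor product of the finitely generated algebras $A_{n+1}$ and $B$. Since $V(\psi)$ is a finite dimensional irreducible $\dot{\mathfrak g}\otimes S$-module, Theorem \ref{5.2} applies verbatim with this choice of $S$, yielding $V(\psi)\simeq \bigotimes_{\chi(M)\neq 0} V(\chi(M))$ for some $\chi \in \mathrm{Supp}(\mathrm{Max}\,(A\otimes B), \dot P^+)$, which is a module for $\bigoplus_{N_0\text{-copies}}\dot{\mathfrak g}$ with $N_0 = |\mathrm{Supp}(\mathrm{Max}\,(A\otimes B), \dot P^+)|$.

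I do not expect any genuine obstacle at this last stage; the only point requiring care is verifying that $A\otimes B$ meets the finite-generation hypothesis of Theorem \ref{5.2}, so that the classification of finite dimensional irreducibles over loop algebras from \cite{10} can be invoked. The real difficulty of the whole circle of ideas lies earlier, in Theorem \ref{t5.1}, which (through Proposition \ref{p5.6} and the subsequent local nilpotency argument) establishes the finite dimensionality of $V(\psi)$; here that fact is simply quoted and the present proof is essentially a bookkeeping assembly of Propositions \ref{p5.2}, \ref{p5.5} and Theorems \ref{t5.1}, \ref{5.2}.
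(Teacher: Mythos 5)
Your proposal is correct and follows essentially the same route as the paper, whose proof of this theorem is precisely the assembly you describe: Theorem \ref{t5.1} (for $V\simeq V(\overline\psi)$ and finite dimensionality of $V(\psi)$), Proposition \ref{p5.5} (for realizing $V(\overline\psi)$ as an irreducible component of $V(\psi)\otimes A$), and Theorem \ref{5.2} applied with $S=A\otimes B$ (for the tensor-product classification of $V(\psi)$). Your explicit invocation of Proposition \ref{p5.2} to justify that the action factors through $\widetilde\tau_1$ is a welcome clarification of a step the paper leaves implicit inside Theorem \ref{t5.1}.
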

\begin{proof}
Proof follows from Theorem \ref{5.2}, Theorem \ref{t5.1} and Proposition \ref{p5.5}.
\end{proof}

\vspace{1cm}

{\bf Acknowledgments:}
We would like to thank Professor S. Eswara Rao for  helpful discussion.

\vspace{2cm}

\end{document}